\documentclass[10pt,leqno]{amsart}
\usepackage{amssymb}
\usepackage{xypic}
\setlength{\headheight}{8pt}
\setlength{\textheight}{22.4cm}
\setlength{\textwidth}{14.5cm}
\setlength{\oddsidemargin}{.1cm}
\setlength{\evensidemargin}{.1cm}
\setlength{\topmargin}{0.2cm}
\begin{document}
\theoremstyle{plain}
\newtheorem{thm}{Theorem}[section]
\newtheorem*{thm1}{Theorem 1}
\newtheorem*{thm2}{Theorem 2}
\newtheorem{lemma}[thm]{Lemma}
\newtheorem{lem}[thm]{Lemma}
\newtheorem{cor}[thm]{Corollary}
\newtheorem{prop}[thm]{Proposition}
\newtheorem{propose}[thm]{Proposition}
\newtheorem{variant}[thm]{Variant}
\theoremstyle{definition}
\newtheorem{notations}[thm]{Notations}
\newtheorem{rem}[thm]{Remark}
\newtheorem{rmk}[thm]{Remark}
\newtheorem{rmks}[thm]{Remarks}
\newtheorem{defn}[thm]{Definition}
\newtheorem{ex}[thm]{Example}
\newtheorem{claim}[thm]{Claim}
\newtheorem{ass}[thm]{Assumption}
\numberwithin{equation}{section}
\newcounter{elno}                
\def\points{\list
{\hss\llap{\upshape{(\roman{elno})}}}{\usecounter{elno}}} 
\let\endpoints=\endlist


\catcode`\@=11
%
%
\def\opn#1#2{\def#1{\mathop{\kern0pt\fam0#2}\nolimits}} 
\def\bold#1{{\bf #1}}%
\def\underrightarrow{\mathpalette\underrightarrow@}
\def\underrightarrow@#1#2{\vtop{\ialign{$##$\cr
 \hfil#1#2\hfil\cr\noalign{\nointerlineskip}%
 #1{-}\mkern-6mu\cleaders\hbox{$#1\mkern-2mu{-}\mkern-2mu$}\hfill
 \mkern-6mu{\to}\cr}}}
\let\underarrow\underrightarrow
\def\underleftarrow{\mathpalette\underleftarrow@}
\def\underleftarrow@#1#2{\vtop{\ialign{$##$\cr
 \hfil#1#2\hfil\cr\noalign{\nointerlineskip}#1{\leftarrow}\mkern-6mu
 \cleaders\hbox{$#1\mkern-2mu{-}\mkern-2mu$}\hfill
 \mkern-6mu{-}\cr}}}
%
%

%
\def\:{\colon}
\let\oldtilde=\tilde
\def\tilde#1{\mathchoice{\widetilde{#1}}{\widetilde{#1}}%
{\indextil{#1}}{\oldtilde{#1}}}
\def\indextil#1{\lower2pt\hbox{$\textstyle{\oldtilde{\raise2pt%
\hbox{$\scriptstyle{#1}$}}}$}}
\def\pnt{{\raise1.1pt\hbox{$\textstyle.$}}}
%

%
\let\amp@rs@nd@\relax
\newdimen\ex@\ex@.2326ex
\newdimen\bigaw@
\newdimen\minaw@
\minaw@16.08739\ex@
\newdimen\minCDaw@
\minCDaw@2.5pc
\newif\ifCD@
\def\minCDarrowwidth#1{\minCDaw@#1}
\newenvironment{CD}{\@CD}{\@endCD}
\def\@CD{\def\A##1A##2A{\llap{$\vcenter{\hbox
 {$\scriptstyle##1$}}$}\Big\uparrow\rlap{$\vcenter{\hbox{%
$\scriptstyle##2$}}$}&&}%
\def\V##1V##2V{\llap{$\vcenter{\hbox
 {$\scriptstyle##1$}}$}\Big\downarrow\rlap{$\vcenter{\hbox{%
$\scriptstyle##2$}}$}&&}%
\def\={&\hskip.5em\mathrel
 {\vbox{\hrule width\minCDaw@\vskip3\ex@\hrule width
 \minCDaw@}}\hskip.5em&}%
\def\verteq{\Big\Vert&&}%
\def\noarr{&&}%
\def\vspace##1{\noalign{\vskip##1\relax}}\relax\let\amp@rs@nd@&\iffalse}\fi
 \CD@true\vcenter\bgroup\relax\let\\=\cr\iffalse}\fi\tabskip\z@skip\baselineskip20\ex@
 \lineskip3\ex@\lineskiplimit3\ex@\halign\bgroup
 &\hfill$\m@th##$\hfill\cr}
\def\@endCD{\cr\egroup\egroup}
%
\def\>#1>#2>{\amp@rs@nd@\setbox\z@\hbox{$\scriptstyle
 \;{#1}\;\;$}\setbox\@ne\hbox{$\scriptstyle\;{#2}\;\;$}\setbox\tw@
 \hbox{$#2$}\ifCD@
 \global\bigaw@\minCDaw@\else\global\bigaw@\minaw@\fi
 \ifdim\wd\z@>\bigaw@\global\bigaw@\wd\z@\fi
 \ifdim\wd\@ne>\bigaw@\global\bigaw@\wd\@ne\fi
 \ifCD@\hskip.5em\fi
 \ifdim\wd\tw@>\z@
 \mathrel{\mathop{\hbox to\bigaw@{\rightarrowfill}}\limits^{#1}_{#2}}\else
 \mathrel{\mathop{\hbox to\bigaw@{\rightarrowfill}}\limits^{#1}}\fi
 \ifCD@\hskip.5em\fi\amp@rs@nd@}
\def\<#1<#2<{\amp@rs@nd@\setbox\z@\hbox{$\scriptstyle
 \;\;{#1}\;$}\setbox\@ne\hbox{$\scriptstyle\;\;{#2}\;$}\setbox\tw@
 \hbox{$#2$}\ifCD@
 \global\bigaw@\minCDaw@\else\global\bigaw@\minaw@\fi
 \ifdim\wd\z@>\bigaw@\global\bigaw@\wd\z@\fi
 \ifdim\wd\@ne>\bigaw@\global\bigaw@\wd\@ne\fi
 \ifCD@\hskip.5em\fi
 \ifdim\wd\tw@>\z@
 \mathrel{\mathop{\hbox to\bigaw@{\leftarrowfill}}\limits^{#1}_{#2}}\else
 \mathrel{\mathop{\hbox to\bigaw@{\leftarrowfill}}\limits^{#1}}\fi
 \ifCD@\hskip.5em\fi\amp@rs@nd@}
%
%
\newenvironment{CDS}{\@CDS}{\@endCDS}
\def\@CDS{\def\A##1A##2A{\llap{$\vcenter{\hbox
 {$\scriptstyle##1$}}$}\Big\uparrow\rlap{$\vcenter{\hbox{%
$\scriptstyle##2$}}$}&}%
\def\V##1V##2V{\llap{$\vcenter{\hbox
 {$\scriptstyle##1$}}$}\Big\downarrow\rlap{$\vcenter{\hbox{%
$\scriptstyle##2$}}$}&}%
\def\={&\hskip.5em\mathrel
 {\vbox{\hrule width\minCDaw@\vskip3\ex@\hrule width
 \minCDaw@}}\hskip.5em&}
\def\verteq{\Big\Vert&}
\def\novarr{&}
\def\noharr{&&}
\def\SE##1E##2E{\slantedarrow(0,18)(4,-3){##1}{##2}&}
\def\SW##1W##2W{\slantedarrow(24,18)(-4,-3){##1}{##2}&}
\def\NE##1E##2E{\slantedarrow(0,0)(4,3){##1}{##2}&}
\def\NW##1W##2W{\slantedarrow(24,0)(-4,3){##1}{##2}&}
\def\slantedarrow(##1)(##2)##3##4{%
\thinlines\unitlength1pt\lower 6.5pt\hbox{\begin{picture}(24,18)%
\put(##1){\vector(##2){24}}%
\put(0,8){$\scriptstyle##3$}%
\put(20,8){$\scriptstyle##4$}%
\end{picture}}}
\def\vspace##1{\noalign{\vskip##1\relax}}\relax\let\amp@rs@nd@&\iffalse}\fi
 \CD@true\vcenter\bgroup\relax\let\\=\cr\iffalse}\fi\tabskip\z@skip\baselineskip20\ex@
 \lineskip3\ex@\lineskiplimit3\ex@\halign\bgroup
 &\hfill$\m@th##$\hfill\cr}
\def\@endCDS{\cr\egroup\egroup}
%
\newdimen\TriCDarrw@
\newif\ifTriV@
\newenvironment{TriCDV}{\@TriCDV}{\@endTriCD}
\newenvironment{TriCDA}{\@TriCDA}{\@endTriCD}
\def\@TriCDV{\TriV@true\def\TriCDpos@{6}\@TriCD}
\def\@TriCDA{\TriV@false\def\TriCDpos@{10}\@TriCD}
\def\@TriCD#1#2#3#4#5#6{%
\setbox0\hbox{$\ifTriV@#6\else#1\fi$}
\TriCDarrw@=\wd0 \advance\TriCDarrw@ 24pt
\advance\TriCDarrw@ -1em
\def\SE##1E##2E{\slantedarrow(0,18)(2,-3){##1}{##2}&}
\def\SW##1W##2W{\slantedarrow(12,18)(-2,-3){##1}{##2}&}
\def\NE##1E##2E{\slantedarrow(0,0)(2,3){##1}{##2}&}
\def\NW##1W##2W{\slantedarrow(12,0)(-2,3){##1}{##2}&}
\def\slantedarrow(##1)(##2)##3##4{\thinlines\unitlength1pt
\lower 6.5pt\hbox{\begin{picture}(12,18)%
\put(##1){\vector(##2){12}}%
\put(-4,\TriCDpos@){$\scriptstyle##3$}%
\put(12,\TriCDpos@){$\scriptstyle##4$}%
\end{picture}}}
\def\={\mathrel {\vbox{\hrule
   width\TriCDarrw@\vskip3\ex@\hrule width
   \TriCDarrw@}}}
\def\>##1>>{\setbox\z@\hbox{$\scriptstyle
 \;{##1}\;\;$}\global\bigaw@\TriCDarrw@
 \ifdim\wd\z@>\bigaw@\global\bigaw@\wd\z@\fi
 \hskip.5em
 \mathrel{\mathop{\hbox to \TriCDarrw@
{\rightarrowfill}}\limits^{##1}}
 \hskip.5em}
\def\<##1<<{\setbox\z@\hbox{$\scriptstyle
 \;{##1}\;\;$}\global\bigaw@\TriCDarrw@
 \ifdim\wd\z@>\bigaw@\global\bigaw@\wd\z@\fi
 \mathrel{\mathop{\hbox to\bigaw@{\leftarrowfill}}\limits^{##1}}
 }
 \CD@true\vcenter\bgroup\relax\let\\=\cr\iffalse}\fi
 \tabskip\z@skip\baselineskip20\ex@
 \lineskip3\ex@\lineskiplimit3\ex@
 \ifTriV@
 \halign\bgroup
 &\hfill$\m@th##$\hfill\cr
#1&\multispan3\hfill$#2$\hfill&#3\\
&#4&#5\\
&&#6\cr\egroup%
\else
 \halign\bgroup
 &\hfill$\m@th##$\hfill\cr
&&#1\\%
&#2&#3\\
#4&\multispan3\hfill$#5$\hfill&#6\cr\egroup
\fi}
\def\@endTriCD{\egroup} 
\newcommand{\mc}{\mathcal} 
\newcommand{\mb}{\mathbb} 
\newcommand{\surj}{\twoheadrightarrow} 
\newcommand{\inj}{\hookrightarrow} \newcommand{\zar}{{\rm zar}} 
\newcommand{\an}{{\rm an}} \newcommand{\red}{{\rm red}} 
\newcommand{\Rank}{{\rm rk}} \newcommand{\codim}{{\rm codim}} 
\newcommand{\rank}{{\rm rank}} \newcommand{\Ker}{{\rm Ker \ }} 
\newcommand{\Pic}{{\rm Pic}} \newcommand{\Div}{{\rm Div}} 
\newcommand{\Hom}{{\rm Hom}} \newcommand{\im}{{\rm im}} 
\newcommand{\Spec}{{\rm Spec \,}} \newcommand{\Sing}{{\rm Sing}} 
\newcommand{\sing}{{\rm sing}} \newcommand{\reg}{{\rm reg}} 
\newcommand{\Char}{{\rm char}} \newcommand{\Tr}{{\rm Tr}} 
\newcommand{\Gal}{{\rm Gal}} \newcommand{\Min}{{\rm Min \ }} 
\newcommand{\Max}{{\rm Max \ }} \newcommand{\Alb}{{\rm Alb}\,} 
\newcommand{\GL}{{\rm GL}\,} 
\newcommand{\ie}{{\it i.e.\/},\ } \newcommand{\niso}{\not\cong} 
\newcommand{\nin}{\not\in} 
\newcommand{\soplus}[1]{\stackrel{#1}{\oplus}} 
\newcommand{\by}[1]{\stackrel{#1}{\rightarrow}} 
\newcommand{\longby}[1]{\stackrel{#1}{\longrightarrow}} 
\newcommand{\vlongby}[1]{\stackrel{#1}{\mbox{\large{$\longrightarrow$}}}} 
\newcommand{\ldownarrow}{\mbox{\Large{\Large{$\downarrow$}}}} 
\newcommand{\lsearrow}{\mbox{\Large{$\searrow$}}} 
\renewcommand{\d}{\stackrel{\mbox{\scriptsize{$\bullet$}}}{}} 
\newcommand{\dlog}{{\rm dlog}\,} 
\newcommand{\longto}{\longrightarrow} 
\newcommand{\vlongto}{\mbox{{\Large{$\longto$}}}} 
\newcommand{\limdir}[1]{{\displaystyle{\mathop{\rm lim}_{\buildrel\longrightarrow\over{#1}}}}\,} 
\newcommand{\liminv}[1]{{\displaystyle{\mathop{\rm lim}_{\buildrel\longleftarrow\over{#1}}}}\,} 
\newcommand{\norm}[1]{\mbox{$\parallel{#1}\parallel$}} 
\newcommand{\boxtensor}{{\Box\kern-9.03pt\raise1.42pt\hbox{$\times$}}} 
\newcommand{\into}{\hookrightarrow} \newcommand{\image}{{\rm image}\,} 
\newcommand{\Lie}{{\rm Lie}\,} 
\newcommand{\CM}{\rm CM}
\newcommand{\sext}{\mbox{${\mathcal E}xt\,$}} 
\newcommand{\shom}{\mbox{${\mathcal H}om\,$}} 
\newcommand{\coker}{{\rm coker}\,} 
\newcommand{\sm}{{\rm sm}} 
\newcommand{\tensor}{\otimes} 
\renewcommand{\iff}{\mbox{ $\Longleftrightarrow$ }} 
\newcommand{\supp}{{\rm supp}\,} 
\newcommand{\ext}[1]{\stackrel{#1}{\wedge}} 
\newcommand{\onto}{\mbox{$\,\>>>\hspace{-.5cm}\to\hspace{.15cm}$}} 
\newcommand{\propsubset} {\mbox{$\textstyle{ 
\subseteq_{\kern-5pt\raise-1pt\hbox{\mbox{\tiny{$/$}}}}}$}} 
\newcommand{\sB}{{\mathcal B}} \newcommand{\sC}{{\mathcal C}} 
\newcommand{\sD}{{\mathcal D}} \newcommand{\sE}{{\mathcal E}} 
\newcommand{\sF}{{\mathcal F}} \newcommand{\sG}{{\mathcal G}} 
\newcommand{\sH}{{\mathcal H}} \newcommand{\sI}{{\mathcal I}} 
\newcommand{\sJ}{{\mathcal J}} \newcommand{\sK}{{\mathcal K}} 
\newcommand{\sL}{{\mathcal L}} \newcommand{\sM}{{\mathcal M}} 
\newcommand{\sN}{{\mathcal N}} \newcommand{\sO}{{\mathcal O}} 
\newcommand{\sP}{{\mathcal P}} \newcommand{\sQ}{{\mathcal Q}} 
\newcommand{\sR}{{\mathcal R}} \newcommand{\sS}{{\mathcal S}} 
\newcommand{\sT}{{\mathcal T}} \newcommand{\sU}{{\mathcal U}} 
\newcommand{\sV}{{\mathcal V}} \newcommand{\sW}{{\mathcal W}} 
\newcommand{\sX}{{\mathcal X}} \newcommand{\sY}{{\mathcal Y}} 
\newcommand{\sZ}{{\mathcal Z}} \newcommand{\ccL}{\sL} 
 \newcommand{\A}{{\mathbb A}} \newcommand{\B}{{\mathbb 
B}} \newcommand{\C}{{\mathbb C}} \newcommand{\D}{{\mathbb D}} 
\newcommand{\E}{{\mathbb E}} \newcommand{\F}{{\mathbb F}} 
\newcommand{\G}{{\mathbb G}} \newcommand{\HH}{{\mathbb H}} 
\newcommand{\I}{{\mathbb I}} \newcommand{\J}{{\mathbb J}} 
\newcommand{\M}{{\mathbb M}} \newcommand{\N}{{\mathbb N}} 
\renewcommand{\P}{{\mathbb P}} \newcommand{\Q}{{\mathbb Q}} 

\newcommand{\R}{{\mathbb R}} \newcommand{\T}{{\mathbb T}} 
\newcommand{\U}{{\mathbb U}} \newcommand{\V}{{\mathbb V}} 
\newcommand{\W}{{\mathbb W}} \newcommand{\X}{{\mathbb X}} 
\newcommand{\Y}{{\mathbb Y}} \newcommand{\Z}{{\mathbb Z}} 
\title[Towards Hilbert-Kunz density functions in Characteristic $0$] 
{Towards Hilbert-Kunz density functions in Characteristic $0$} 
\author{V. Trivedi} \address{School of Mathematics, Tata Institute of 
Fundamental Research, Homi Bhabha Road, Mumbai-400005, India} 
\email{vija@math.tifr.res.in} \date{}
\begin{abstract}For a pair $(R, I)$, where $R$ is a standard graded
 domain of dimension $d$
over an algebraically closed  field of characteristic $0$ and $I$ 
is a graded ideal of finite colength, we prove 
that the existence of $\lim_{p\to \infty}e_{HK}(R_p, I_p)$
 is equivalent, for any fixed $m\geq d-1$,  to the existence of
 $\lim_{p\to \infty}\ell(R_p/I_p^{[p^m]})/p^{md}$. 
 
This we get as a consequence of
Theorem~1.1: As
$p\longto \infty $,  
the convergence of the HK density function 
$f{(R_p, I_p)}$  is equivalent to the convergence of the truncated HK density 
functions $f_m(R_p, I_p)$ (in $L^{\infty}$ norm) of the {\it mod $p$ reductions}
$(R_p, I_p)$, for any fixed $m\geq d-1$.

In particular,  to define the HK density function $f^{\infty}(R, I)$ in $\Char~0$, 
 it is enough to 
 prove the existence of $\lim_{p\to \infty} f_m(R_p, I_p)$,
for any fixed $m\geq d-1$. 

This allows us to prove the existence of $e_{HK}^{\infty}(R, I)$ in many new cases, 
{\em e.g.},  
when $\mbox{Proj~R}$ is a Segre product of curves, for example.
\end{abstract}

\subjclass[2010]{13D40, 14H60, 14J60, 13H15} \keywords{Hilbert-Kunz density, 
Hilbert-Kunz multiplicity, characteristic~$0$}

\maketitle\section{Introduction}

Let $R$ be a  Noetherian ring of prime characteristic
$p >0$ and of dimension $d$ and let
$I\subseteq R$ be an ideal of finite colength. Then we recall that
the {\em Hilbert-Kunz
multiplicity} of $R$ with
respect to $I$ is defined as
$$e_{HK}(R, I) = \lim_{n\to \infty}\frac{\ell(R/I^{[p^n]})}{p^{nd}},$$
where
$ I^{[p^n]} =$ the  $n$-th Frobenius power of $I$
= the ideal generated by $p^n$-th power of elements of $I$. This   
 is an ideal of finite colength and  $\ell(R/I^{[p^n]})$ denotes the
length of
the
$R$-module $R/I^{[p^n]}$.  
This invariant had been introduced by E. Kunz and
existence of the limit was proved by 
Monsky [Mo1]. It  carries information about  char~$p$ 
related properties of the ring,  but  at the same time is difficult to
compute (even in the graded case)
 as various standard techniques, used for studying multiplicities, 
are not applicable for the invariant $e_{HK}$.

 It is natural to ask if the notion of this invariant can be extended to
the  `char~0' case by studying the behaviour of {\it mod~$p$ reductions}. 

A natural way to attempt this for {\em a pair $(R,I)$} (from now onwards, 
unless stated otherwise, 
 by a pair  $(R, I)$, we mean  $R$ is a  
standard graded ring and $I\subset R$ is a graded  ideal of finite colength)
could  be as follows: Suppose $R$ is a finitely generated
 algebra  and a domain over a field  $k$ of characteristic $0$ and 
$I\subseteq R$ is an ideal
 of finite colength. Let $(A, R_A, I_A)$ be a {\em spread} of the pair $(R, I)$
 (see Definition~\ref{d3}),
 where $A \subset k$ is a finitely generated algebra over $\Z$.
Then we may define
$$e_{HK}^{\infty}(R, I) := \lim_{s\to s_0}e_{HK}(R_s, I_s),$$
 where $R_s = R_A\tensor_A{\bar k(s)}$ and 
$I_s = I_A\tensor_A {\bar k(s)}$ with ${\bar k(s)}$ as the algebraic closure
 of $k(s)$,    
$s_0$ is the generic point of $\Spec(A)$, and $s$ is a closed point of 
$\Spec(A)$ (the 
definition is tentative, since the existence of this limit is not known in general).
Or consider a simpler situation: $R$ is a finitely generated $\Z$-algebra and a
 domain, $I\subset R$ such that $R/I $ is an abelian group of finite rank
then let 
$$e_{HK}^{\infty}(R, I) := \lim_{p\to \infty}e_{HK}(R_p, I_p),
\quad\mbox{where}\quad R_p= R\tensor_{\Z}\Z/p\Z\quad\mbox{and}\quad 
I_p= I\tensor_{\Z}\Z/p\Z.$$

In case of dimension~$R=1$, we know that the Hilbert-Kunz multiplicity
 coincides with the Hilbert-Samuel multiplicity; hence it is independent of $p$, for large $p$.

For homogeneous coordinate rings of  plane curves, with respect to the
 maximal graded ideal (in [T1], [Mo3]), 
nonsingular curves with respect to a graded ideal $I$ (in [T2]),
 diagonal 
hypersurfaces (in [GM] and [HM]), it has been shown that $e_{HK}(R_p, I_p)$ 
varies with  $p$, and  the  limit exists as $p\to \infty $. 
Then there are other cases where $e_{HK}(R_p, I_p)$ is independent of $p$:
 plane cubics (by [BC], [Mo2] and [P]), certain 
monomials ideals (by [Br], [C], [E], [W]), two dimensional invariant rings for 
finite group actions (by [WY2])
and full flag varietes and elliptic curves (by [FT]).
 Therefore the  limit exists in all these cases. 

Since $$e_{HK}^{\infty}(R, I) := \lim_{p\to \infty} 
\lim_{n\to \infty}\frac{\ell(R_p/I_p^{[p^n]})}{(p^n)^d},$$
it seems harder to compute as such, as the inner limit
$\lim_{n\to \infty}\ell(R_p/I_p^{[p^n]})/{(p^n)^d}$ itself does
 not seem easily computable (even in the  graded case). In the special 
situation considered in [GM] by 
Gessel-Monsky, the existence of $e_{HK}^{\infty}$ is proved by 
reducing the problem to the existence of 
 $\lim_{p\to \infty}
\frac{\ell(R_p/I_p^{[p]})}{p^{d}}$. 
To make this invariant  more
approachable in a general graded case,  the following question was posed in [BLM]:

\vspace{5pt}

\noindent{\bf Question}. Suppose $e_{HK}^{\infty}(R, I)$ exists, is it true 
that for any  fixed $n\geq 1$
$$ e_{HK}^{\infty}(R, I) = \lim_{p\to \infty}
\frac{\ell(R_p/I_p^{[p^n]})}{(p^n)^d}?$$

The main result of their paper was to an give affirmative answer in the case
 of a $2$ dimensional standard graded normal domain $R$ with respect to 
a homogeneous ideal $I$ of finite colength.
 Note that the existence of $e_{HK}^{\infty}(R, I)$, in this case,
 was  proved earlier in [T2]. 

Recall  that for a 
vector bundle $V$ on a smooth (projective and polarized) variety, we have the well defined {\it HN data},
namely $\{r_i(V), \mu_i(V)\}_i$, where $r_i(V) = \rank(E_i/E_{i-1})$ and 
$\mu_i(V) = \mbox{slope of}~E_i/E_{i-1}$ and 
$$0\subset E_1 \subset E_2 \subset \cdots \subset E_l \subset V$$
 is the Harder-Narasimhan filtration of $V$.
 
Let $X_p = {\rm Proj}~R_p$, which is  a nonsingular projective curve,
 and let $I_p$ be generated by homogeneous 
elements of degrees
$d_1, \ldots, d_{\mu}$ then we have  
 the vector bundle $V_p$ on $X_p$ given by the
 following  canonical exact sequence of
 $\sO_{X_p}$-modules
$$0\longto V_p\longto \oplus_i\sO_{X_p}(1-d_i)\longto \sO_{X_p}(1)\longto 0.$$
  
Then, by Proposition~1.16 in [T2], there is a constant $C$ determined by genus of 
$X_p$ and $\rank~V_p$ (hence independent of $p$), such that for $s\geq 1$
\begin{equation}\label{e1}\left|\sum_jr_j(F^{s*}V_p)\mu_j(F^{s*}V_p)^2 -
\sum_i r_i(V_p)\mu_i(V_p)^2\right| \leq C/p.\end{equation}
(Here $F$ is the absolute Frobenius morphism, and $F^s$ is the $s$-fold iterate.)
Note that the HN filtration and hence the HN data of $V_p$ stabilizes
for $p>>0$ ([Mar]).

Thus here 
\begin{enumerate}
\item one relates $\ell(R_p/I_p^{[p^s]})$ with the HN data of $F^{s*}V_p$, 
for $s\geq 1$ ([B] and [T1]),
\item the HN data of $F^{s*}V_p$ is related to the HN data of $V_p$ ([T2]),
\item the restriction of the relative HN filtration of $V_A$ on $X_A$ 
 (where $V_A$ is a spread of 
$V_0$ in char~$0$) remains the HN filtration 
of $V_p$ for large $p$ ([Mar]).
\end{enumerate}

In particular for a pair $(R,I)$, where $\Char~R = p >0$,
 with the associated syzygy bundle $V$ (as above),
 the proof uses the  comparison of  $\ell(R/I^{[p^s]})$ with the  
HN data of the syzygy bundle $V$ and the other well behaved invariants of 
($R$, $I$) (which have well defined notion in all characteristics and 
are  well behaved vis-a-vis reduction mod $p$). 
 
However  note that  $(3)$ is valid for   $\dim~R\geq 2$,  and 
 $(2)$ also holds  for 
$\dim~R\geq 3$  (proved relatively recently in [T3]).
 But $(1)$
 does not seem to hold in higher dimension, 
 due to the existence of cohomologies other than 
 $H^0(-)$ and $H^1(-)$ (therefore one can not use anymore the semistability 
property of a vector bundle to compute $h^0$ of almost all its twists,  
by powers of a very ample line bundle).

In this paper, we approach the problem by a completely different method (see 
Corollary~\ref{r4}), 
 comparing directly
$\frac{1}{(p^n)^d}\ell(R/I^{[p^n]})$ and $\frac{1}{(p^{n+1})^d}
\ell(R/I^{[p^{n+1}]})$, for $n\geq 1$, taking into account
 that both are graded.

For this we phrase the problem in a more general setting:
By  the theory of Hilbert-Kunz {\it density function} (which was 
introduced and developed in 
[T4]), for  a pair
 $(R, I)$ where $R$ is a domain  of $\Char~p>0$,
 there exists  a sequence of functions $\{f_n(R_p, I_p):[0, \infty)\longto
 \R\}_n$ such 
that  
$$\frac{1}{(p^n)^d}\ell\left(\frac{R}{I^{[p^n]}}\right) = 
\int_0^{\infty} f_n(R_p, I_p)((x)dx\quad
\mbox{and}\quad 
\lim_{n\to \infty}\frac{1}{(p^n)^d}\ell\left(\frac{R}{I^{[p^n]}}\right) = 
\int_0^{\infty} f(R_p, I_p)(x)dx,$$
where the map 
$$f(R_p, I_p):[0, \infty)\to \R\quad\mbox{ is given by}\quad
f(R_p, I_p)(x) = \lim_{n\to \infty} f_n(R_p, I_p)(x)$$
 is called the HK density function of 
$(R_p, I_p)$ (the existence and properties of the limit defining $f(R_p, I_p)$  
are proved in [T4]).
 We show here that, for each $x\in [1, \infty)$, 
$$\quad f^{\infty}(R, I)(x) := 
\lim_{p\to \infty}\lim_{n\to \infty} f_n(R_p, I_p)(x)\quad\mbox{exists}~~\iff~~ 
 \lim_{p\to \infty}f_m(R_p, I_p)(x)\quad\mbox{exists}, $$
for any fixed $m \geq d-1$, where  $d-1 = \dim{\rm Proj}~R$. Moreover
 if it exists then 
$$\quad f^{\infty}(R, I)(x) = 
 \lim_{p\to \infty}f_m(R_p, I_p)(x),\quad\mbox{for any}\quad m\geq d-1.$$
 
The main point  (Proposition~\ref{p1}) is to give a 
bound on the difference $\|f_n(R_p, I_p) - f_{n+1}(R_p, I_p)\|$,
in terms of a power of $p$ and invariants which are well behaved under reduction 
mod $p$,  where 
$\|g\| := \sup\{g(x)\mid x\in [1, \infty)\}$ is the $L^{\infty}$ norm. 
Since the union of the support of all $f_n$ is contained in a compact interval,
a similar bound   (Corollary~\ref{r4}) holds for 
the difference  $|\ell(R/I^{[p^n]})/{(p^n)^d} - 
\ell(R/I^{[p^{n+1}]})/{(p^{n+1})^d}|$.
More precisely we prove the following

\begin{thm}\label{t2}Let $R$ be a standard graded 
domain of dimension $d\geq 2$, over an algebraically closed field  $k$ of 
characteristic $0$. Let $I \subset R$ be a homogeneous ideal of finite
colength. Let $(A, R_A, I_A)$ be a spread (see Definition~\ref{d3} and
 Notations~\ref{n6}).
 Then, for 
a closed point $s\in \Spec(A)$, let the function 
$$f_n(R_s, I_s)(x):[1, \infty)\longto [0, \infty)\quad\mbox{be given by}\quad f_n(R_s, I_s)(x)= 
\frac{1}{q^{d-1}}\ell\left(\frac{R_s}{I_s^{[q]}}\right)_{\lfloor xq\rfloor}.$$ 
Let the HK density function of $(R_s, I_s)$ {be given by} 
$$ f(R_s, I_s)(x) = \lim_{n\to \infty}f_n(R_s, I_s)(x).$$
Let  $s_0 \in \Spec{Q(A)}$
 denote the generic point of $\Spec(A)$. Then 
\begin{enumerate}
\item  there exists a constant $C$ (given in terms of invariants 
of $(R_{s_0}, I_{s_0})$ of the generic fiber) and an open dense subset $\Spec(A')$ of $\Spec(A)$ 
 such that for every closed point
 $s\in \Spec(A')$ and $n\geq 1$,  
$$\|f_n(R_s, I_s)-f_{n+1}(R_s, I_s)\|<C/p^{n-d+2},$$
where  $p = \Char~k(s)$. In particular, for any  $m\geq d-1$, 
$$\lim_{s\to s_0}\|f_{m}(R_s, I_s)-f(R_s, I_s)\| = 0.$$
\item There exists a constant $C_1$ (given in terms of invariants 
of $(R_{s_0}, I_{s_0})$) and an open dense subset $\Spec(A')$ of $\Spec(A)$,
 such that for every closed point
 $s\in \Spec(A')$ and $n\geq 1$,  
 we have 
$$\left|\frac{1}{p^{nd}}\ell\left(\frac{R_s}{I_s^{[p^n]}}\right) - 
\frac{1}{p^{(n+1)d}}\ell\left(\frac{R_s}{I_s^{[p^{n+1}]}}\right)\right|\leq 
\frac{C_1}{p^{n-d+2}}.$$
\item For any $m\geq d-1$, $$\lim_{s\to s_0}\left[\frac{1}{p^{md}}
\ell\left(\frac{R_s}{I_s^{[p^{m}]}}\right)-
e_{HK}(R_s, I_s)\right] = 0.$$ 
\end{enumerate}
\end{thm}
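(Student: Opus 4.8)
The plan is to reduce the whole statement to the per-step estimate appearing in part~(1): that there are a constant $C$, depending only on numerical invariants of the generic fibre $(R_{s_0},I_{s_0})$, and an open dense $\Spec(A')\subseteq\Spec(A)$, such that $\|f_n(R_s,I_s)-f_{n+1}(R_s,I_s)\|<C/p^{n-d+2}$ for every closed point $s\in\Spec(A')$ and every $n\ge 1$, where $p=\Char k(s)$ (this is Proposition~\ref{p1}). Granting it, part~(1) follows by telescoping: since $f_n(R_s,I_s)\to f(R_s,I_s)$ in $L^\infty$ as $n\to\infty$ for each fixed $s$ (by~[T4]), for $m\ge d-1$ one gets $\|f_m(R_s,I_s)-f(R_s,I_s)\|\le\sum_{n\ge m}\|f_n(R_s,I_s)-f_{n+1}(R_s,I_s)\|<C\sum_{n\ge m}p^{-(n-d+2)}=\frac{C p^{d-1-m}}{p-1}\le\frac{C}{p-1}$, which tends to $0$ as $s\to s_0$. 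For~(2) and~(3) I would combine this with the identities $\frac{1}{p^{nd}}\ell(R_s/I_s^{[p^n]})=\int_0^\infty f_n(R_s,I_s)(x)\,dx$ and $e_{HK}(R_s,I_s)=\int_0^\infty f(R_s,I_s)(x)\,dx$ recalled above, and with the fact that all $f_n(R_s,I_s)$ are supported in one interval $[0,b]$, where $b$ depends only on the embedding dimension of $R$ and on an integer $N$ such that $I$ contains the $N$-th power of the irrelevant maximal ideal of $R$ (both of these being constant on a possibly smaller open dense set by generic flatness). Then $\bigl|\frac{1}{p^{nd}}\ell(R_s/I_s^{[p^n]})-\frac{1}{p^{(n+1)d}}\ell(R_s/I_s^{[p^{n+1}]})\bigr|=\bigl|\int_0^b(f_n-f_{n+1})(R_s,I_s)(x)\,dx\bigr|\le b\,\|f_n(R_s,I_s)-f_{n+1}(R_s,I_s)\|+O(1/q)<C_1/p^{n-d+2}$, which is~(2); here the $O(1/q)$ absorbs the contribution of the degrees $x<1$, where $f_n(R_s,I_s)$ reduces to the $q^{-(d-1)}$-rescaling of the Hilbert function of $R$. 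Summing the last inequality over $n\ge m$ and using $e_{HK}(R_s,I_s)=\lim_n\frac{1}{p^{nd}}\ell(R_s/I_s^{[p^n]})$ gives $\bigl|\frac{1}{p^{md}}\ell(R_s/I_s^{[p^m]})-e_{HK}(R_s,I_s)\bigr|\le\frac{C_1 p^{d-1-m}}{p-1}\le\frac{C_1}{p-1}\to 0$ for $m\ge d-1$, which is~(3).

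So the entire theorem reduces to Proposition~\ref{p1}, which I would prove by comparing the two graded modules $R_s/I_s^{[q]}$ and $R_s/I_s^{[pq]}$ directly, degree by degree, with $q=p^n$ and keeping in mind the structural relations $I_s^{[pq]}=(I_s^{[q]})^{[p]}\subseteq I_s^{[q]}$. First, shrinking $\Spec(A')$, one may assume that $R_s$ is geometrically integral and has the same Hilbert polynomial, the same $a$-invariants, and the same Serre-vanishing thresholds as $R_{s_0}$, and that the syzygy bundle of $I_s$ on $X_s={\rm Proj}\,R_s$ is the reduction of the syzygy bundle $V_{s_0}$ of the generic fibre; all of these are constructible conditions, so they hold on a dense open as soon as they hold at the generic point, and this is what will force the final constant to depend only on $(R_{s_0},I_{s_0})$. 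In the degree range $m<qd_1$ ($d_1$ the least degree of a generator of $I_s$) one has $\ell(R_s/I_s^{[q]})_m=\dim_k(R_s)_m$, a value of the Hilbert polynomial $P_{R_s}$ past the $a$-invariant, and comparing its $q$- and $pq$-rescalings contributes only $O(1/q)$ to $\|f_n-f_{n+1}\|$, with implied constant read off from the coefficients of $P_{R_{s_0}}$. The remaining degrees form the bulk, where the comparison of $q^{-(d-1)}\ell(R_s/I_s^{[q]})_{\lfloor xq\rfloor}$ with $(pq)^{-(d-1)}\ell(R_s/I_s^{[pq]})_{\lfloor xpq\rfloor}$ is the heart of the matter.

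The main obstacle is precisely this bulk comparison when $d\ge 3$. For $d=2$, $X_s$ is a smooth curve and one can pass to the syzygy bundle $V_s$: twisting by $\sO_{X_s}(m)$ the exact sequence $0\to V_s\to\oplus_j\sO_{X_s}(-d_j)\to\sO_{X_s}\to 0$ and its Frobenius pull-backs (note that $F^{n*}V_s$ is the syzygy bundle of $I_s^{[q]}$), and using Riemann--Roch and Serre vanishing for line bundles, one writes $\ell(R_s/I_s^{[q]})_m$ in terms of $h^1(X_s,F^{n*}V_s(m))$ plus explicit line-bundle corrections, and controls the passage from $F^{n*}V_s$ to $F^{(n+1)*}V_s=F^*(F^{n*}V_s)$ by Harder--Narasimhan slope estimates of the type~(\ref{e1}); this is why in the curve case the rate is $C/p^n$. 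For $d\ge 3$ the intermediate cohomology $h^1$ of the twists of $F^{n*}V_s$ is no longer governed by Harder--Narasimhan data — this is exactly the failure of ingredient~(1) of the introduction — so one cannot argue twist by twist. Instead one uses the surjection $R_s/I_s^{[pq]}\twoheadrightarrow R_s/I_s^{[q]}$ furnished by $I_s^{[pq]}=(I_s^{[q]})^{[p]}\subseteq I_s^{[q]}$ and bounds the graded pieces of its kernel $I_s^{[q]}/I_s^{[pq]}$ by slicing $X_s$ with a general hyperplane — an operation compatible with reduction mod $p$ over a smaller open dense set — which drops the dimension by one and sets up an induction on $d$, at the cost of the extra factor $p^{d-2}$ in the bound. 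Keeping every constant tied to the generic fibre through this induction, and controlling the floor-function and transition errors near $m=qd_j$ uniformly in $p$, is the technical core of Proposition~\ref{p1}.
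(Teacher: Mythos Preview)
Your reduction of the theorem to the per-step estimate (Proposition~\ref{p1}) and your derivation of parts~(1)--(3) from it by telescoping and integrating over the common compact support are correct and essentially match the paper's argument; the paper likewise splits off the degrees below $q$ as an $\ell(R_s/{\bf m}_s^{q})$ term before integrating $f_n-f_{n+1}$ over $[1,n_0\mu]$, which is your ``$O(1/q)$'' remark made precise.

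The gap is in your sketch of Proposition~\ref{p1} itself. The surjection $R_s/I_s^{[pq]}\twoheadrightarrow R_s/I_s^{[q]}$ and its kernel $I_s^{[q]}/I_s^{[pq]}$ live in a \emph{fixed} degree, whereas the quantity you must control is the difference between $q^{-(d-1)}\ell(R_s/I_s^{[q]})_{\lfloor xq\rfloor}$ and $(pq)^{-(d-1)}\ell(R_s/I_s^{[pq]})_{\lfloor xpq\rfloor}$: the degree jumps from roughly $m$ to $mp$ and the normalization changes by $p^{d-1}$. Nothing in ``bound the kernel and slice by a hyperplane'' supplies this degree-$m$ to degree-$mp$ comparison, and a general hyperplane is never a nonzerodivisor on the finite-length module $R_s/I_s^{[q]}$, so the hoped-for induction on $d$ does not get off the ground. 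Your $d=2$ plan also assumes $X_s$ is smooth, which is not part of the hypotheses; the paper explicitly avoids HN filtrations so as not to need normality.

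The paper's mechanism for the degree shift is different and is the heart of the argument: via a Noether normalization $\pi:X\to\P^{d-1}$ one produces a short exact sequence $0\to\sO_X(-d)^{\oplus p^{d-1}}\to F_*\sO_X\to\sQ\to 0$ with $\dim\supp\sQ<d-1$ (Lemma~\ref{l4}). For any locally free $P$ the projection formula gives $H^0(X,F^{n*}P\otimes(F_*\sO_X)(m))=H^0(X,F^{(n+1)*}P(mp))$, so tensoring the displayed sequence by $F^{n*}P(m)$ compares $p^{d-1}\cdot\coker\phi_{m-d,q}(\sO_X)$ with $\coker\phi_{mp,qp}(\sO_X)$ up to an error controlled by $h^0$ of $F^{n*}P\otimes\sQ(m)$. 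That error, together with the two auxiliary shifts by $\sO_X/\sO_X(-d)$ and $\sO_X(n_2)/\sO_X$, is bounded uniformly in $n$ by Lemma~\ref{l2} (a $\sQ$-regular-sequence argument on the lower-dimensional sheaf) with constants $C_0(\sQ),D_0(\sQ)$ that Lemma~\ref{l5} shows are at most $p^{d-1}$ times a universal polynomial in the Hilbert coefficients and $\bar m$. This is where the $p^{d-2}$ loss actually comes from, and it requires no smoothness and no HN data.
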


As a result we have

\begin{cor}\label{c1}Let $R$ be a standard graded domain and a finitely 
generated $\Z$-algebra of characteristic $0$, 
let $I\subset R$ be a homogeneous ideal of finite
 colength, such that for almost all $p$, the fiber over $p$,  
$R_p:= R\tensor_{\Z}\Z/p\Z$ 
is a standard graded ring of dimension $d$, which is geometrically integral,
and $I_p\subset R_p$ is a 
homogenous ideal of finite colength. Then
\begin{enumerate}
\item there exists a constant $C_1$ given in terms of invariants of 
$R$ and $I$ such that, 
for $n\geq 1$,  
 we have 
$$\left|\frac{1}{p^{nd}}\ell\left(\frac{R_p}{I_p^{[p^n]}}\right) - 
\frac{1}{p^{(n+1)d}}\ell\left(\frac{R_p}{I_p^{[p^{n+1}]}}\right)\right|\leq 
\frac{C_1}{p^{n-d+2}}.$$
\item For any fixed  $m\geq d-1$, 
$$\lim_{p\to \infty}\left[e_{HK}(R_p, I_p)-
\frac{1}{p^{md}}\ell\left(\frac{R_p}{I_p^{[p^{m}]}}\right)\right] = 0.$$
 In particular,  for any  fixed $m\geq d-1$, 
$$e_{HK}^{\infty}(R, I):= \lim_{p\to \infty}e_{HK}(R_p, I_p)\quad\mbox{exists}~~
 \iff~~~
\lim_{p\to \infty}\frac{1}{p^{md}}\ell\left(\frac{R_p}{I_p^{[p^{m}]}}\right)\quad\mbox{exists}.$$
\end{enumerate}

\end{cor}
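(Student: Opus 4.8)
The plan is to read Corollary~\ref{c1} off Theorem~\ref{t2} by taking the base of the spread to be a localization of $\Z$. If $d=1$ the assertion is classical: the hypotheses force $R_p\cong\F_p[t]$, so $e_{HK}(R_p,I_p)$ and each $p^{-md}\ell(R_p/I_p^{[p^m]})$ with $m\geq 0$ is independent of $p$ for $p\gg0$, and both estimates hold trivially; so assume $d\geq2$.

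Fix $N$ such that for every prime $p\nmid N$ the fibre $R_p=R\otimes_\Z\Z/p\Z$ is standard graded, geometrically integral of dimension $d$, with $I_p$ of finite colength. Geometric integrality of the fibres of $\Spec R\to\Spec\Z$ is a constructible condition on $\Spec\Z$ which holds at all but finitely many closed points, hence at the generic point; therefore $R\otimes_\Z\Q$, and a fortiori $R_{\overline\Q}:=R\otimes_\Z\overline\Q$, is a standard graded domain of dimension $d\geq2$ over the algebraically closed field $k=\overline\Q$. Enlarging $N$ if necessary, put $A:=\Z[1/N]\subset k$, $R_A:=R\otimes_\Z A$ and $I_A:=I\otimes_\Z A$; then $A$ is a finitely generated $\Z$-algebra with $Q(A)=\Q$, $R_A\otimes_A k=R_{\overline\Q}$ and $I_A\otimes_A k=I_{\overline\Q}$, so $(A,R_A,I_A)$ is a spread of $(R_{\overline\Q},I_{\overline\Q})$ (Definition~\ref{d3}) whose generic fibre is $(R_{s_0},I_{s_0})=(R_{\overline\Q},I_{\overline\Q})$. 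For a closed point $s=(p)$ of $\Spec A$ one has $k(s)=\F_p$, hence $R_s=R_A\otimes_A\overline\F_p=R_p\otimes_{\F_p}\overline\F_p$; since $R_p$ is standard graded with residue field $\F_p$ at its irrelevant ideal, one has $\dim_{\overline\F_p}(R_s/I_s^{[q]})_j=\dim_{\F_p}(R_p/I_p^{[q]})_j$ for every $j$ and every $q=p^n$, whence $f_n(R_s,I_s)=f_n(R_p,I_p)$ on $[1,\infty)$, $\ell(R_s/I_s^{[p^n]})=\ell(R_p/I_p^{[p^n]})$ for all $n$, and $e_{HK}(R_s,I_s)=e_{HK}(R_p,I_p)$.

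Now apply Theorem~\ref{t2} to $(R_{\overline\Q},I_{\overline\Q})$ with this spread. An open dense subset $\Spec(A')$ of $\Spec\Z[1/N]$ is the complement of finitely many maximal ideals, so ``for every closed $s\in\Spec(A')$'' becomes ``for all $p$ outside a finite set'' and ``$\lim_{s\to s_0}$'' becomes ``$\lim_{p\to\infty}$''. Under the identifications above, Theorem~\ref{t2}(2) yields exactly Corollary~\ref{c1}(1) (for $p$ outside a finite set and all $n\geq1$), the constant there being built from invariants of $(R_{s_0},I_{s_0})=(R_{\overline\Q},I_{\overline\Q})$ and so from $R$ and $I$; and Theorem~\ref{t2}(3) yields $\lim_{p\to\infty}\bigl[e_{HK}(R_p,I_p)-p^{-md}\ell(R_p/I_p^{[p^m]})\bigr]=0$ for each fixed $m\geq d-1$, the first assertion of Corollary~\ref{c1}(2). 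Since the real sequences $\bigl(e_{HK}(R_p,I_p)\bigr)_p$ and $\bigl(p^{-md}\ell(R_p/I_p^{[p^m]})\bigr)_p$ then differ by a null sequence, one converges as $p\to\infty$ if and only if the other does, and in that case the limits coincide; this is the claimed equivalence, and it gives $e_{HK}^\infty(R,I)=\lim_{p\to\infty}p^{-md}\ell(R_p/I_p^{[p^m]})$ for every fixed $m\geq d-1$.

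Apart from this transcription, the only steps needing care are the bookkeeping of the base change from $R$ to $R_{\overline\Q}$ — verifying that the closed fibres of the spread really are (extensions of scalars of) the $R_p$ and that lengths, the $f_n$ and $e_{HK}$ do not feel the residue-field extension $\F_p\subset\overline\F_p$, both of which are immediate once $R_p$ is standard graded over $\F_p$ — and the observation that the generic fibre is a domain, so that Theorem~\ref{t2} applies, which is precisely where geometric integrality of almost all closed fibres is invoked.
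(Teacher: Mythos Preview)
Your proof is correct and follows exactly the approach the paper intends: after proving Theorem~\ref{t2}, the paper simply writes ``Now the proof of Corollary~\ref{c1} is obvious,'' and you have carefully unpacked that obviousness by taking $A=\Z[1/N]$ as the base of the spread, identifying closed points with primes, and transcribing parts~(2) and~(3) of Theorem~\ref{t2}. Your explicit handling of the bookkeeping (geometric integrality at the generic point, invariance of lengths under the residue-field extension $\F_p\subset\overline{\F_p}$, and the separate $d=1$ case) is more detailed than the paper but entirely in line with it.
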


In particular the last assertion of the above corollary 
answers the above mentioned
 question of [BLM] affirmatively, for all $(R,I)$, where $R$ is a
standard graded domain 
and $I\subset R$  is a graded ideal of finite 
colength.

 Moreover the proof, even in the case of dimension $2$ 
(unlike the proof in [BLM])
 does not rely on earlier results of 
[B], [T1] and [T2]. In particular, since we do not use Harder-Narasimhan 
filtrations, we do not need a normality hypothesis  on the ring $R$.

\begin{rmk}\label{r2}If $e_{HK}^\infty(R,I)$ exists for a pair $(R, I)$,
whenever $R$ is a standard graded 
domain, defined over an algebraically closed field of characteristic $0$, 
then one can check that $e_{HK}^\infty(R,I)$ exists for 
any  pair $(R, I)$ where $R$ is a standard graded ring over a field
$k$  of characteristic $0$: Let ${\bar R} = R\tensor_k{\bar k}$.
Let $\{ q_1, \ldots,  q_r\} = 
\{ q\in {\rm Ass}({\bar R})\mid \dim~{\bar R}/{ q} = \dim~R\}$
then we have a spread $(A, {\bar R}_A, {\bar I}_A)$ of $({\bar R}, {\bar I})$
  such that 
 $\{{ q}_{1s}, \ldots, { q}_{rs}\} = 
\{{ q}_s\in {\rm Ass}({\bar R}_s)\mid \dim~{\bar R}_s/{ q}_s =
 \dim~{\bar R}_s\}$ (here $q_{is} = q_i\tensor_k{\bar{k(s)}} \subset {\bar R}$).
and, for each $i$, $\ell(({\bar R}_s){q_{is}})= l_i$, a constant
independent of $s$. This implies that 
$$e_{HK}({\bar R}_s,{\bar I}_s) = \sum_{i=1}^rl_ie_{HK}
\left(\frac{{\bar R}_s}{{ q}_{is}}, 
\frac{{\bar I}_s+{ q}_{is}}{{ q}_{is}}\right),$$
which implies 
$$\lim_{s\to s_0}e_{HK}({\bar R}_s,{\bar I}_s) = 
\sum_{i=1}^rl_i\lim_{s\to s_0}e_{HK}\left(\frac{{\bar R}_s}{{ q}_{is}}, 
\frac{{\bar I}_s+{ q}_{is})}{{ q}_{is}}\right) = \sum_{i=1}^rl_i
e_{HK}^{\infty}\left(\frac{{\bar R}}{{ q}_i}, 
\frac{{\bar I}+{q}_i}{{ q}_i}\right).$$
Hence, in this situation,  one can define 
$$e^{\infty}_{HK}(R, I) := e^{\infty}_{HK}({\bar R}, {\bar I}) =
\sum_{i=1}^rl_i e_{HK}^{\infty}\left(\frac{{\bar R}}{{ q}_i}, 
\frac{{\bar I}+{q}_i}{{ q}_i}\right).$$ 
\end{rmk}
In Section~4, we study some properties of $f^{\infty}(R,I)$ (when it exists), and prove that 
$f^{\infty}(R, I)$ behaves well under Segre product (Propositions~\ref{p2} and \ref{p3}). 
In case of nonsingular projective curves 
(Theorem~\ref{vb}), the function $f(R_s, I_s)-f^{\infty}(R, I)$ is 
nonnegative, continous and characterizes the behaviour of the HN 
filtration of the syzygy bundle of the curve, reduction mod $\Char~k(s)$. Hence 
$f((S_1\#\cdots \#S_n)_p)-f^{\infty}(S_1\#\cdots \#S_n) = 0$ if and only if 
the Harder-Narasimhan filtrations of the  syzygy bundles of $\mbox{Proj}~S_i$ are
the strong HN filtrations reduction mod $p$, for all $i$.  

We give an example to show that, for an
arbitrary Segre product of plane trinomial curves, the function 
$f((S_1\#\cdots \#S_n)_p)-f^{\infty}(S_1\#\cdots S_n) = 0$ for a Zariski dense set 
of primes.

It is easy to check  that 
if $f^{\infty}(R,I)$ exists (in $L^{\infty}$ norm) then $e_{HK}^{\infty}(R,I)$ exists.
One can ask the converse, {\em i.e.},

\vspace{5pt}

\noindent{\bf Question}: Does the existence of $e_{HK}^{\infty}(R, I)$ imply the 
existence of $f^{\infty}(R, I)$?

\vspace{5pt}
By Proposition~\ref{p2}, an affirmative answer to this question will 
imply the existence of the 
$e_{HK}^{\infty}$ for Segre products of the rings for which  
$e_{HK}^{\infty}$ exist.

\section{A key proposition}
Throughout this section, $R$  is a Noetherian standard graded integral domain
 of dimension $d$ over an algebraically 
closed field $k$ of $\Char~p >0$, $I$ is 
a homogeneous ideal of $R$ such that $\ell(R/I)< \infty$.
Let $h_1, \cdots, h_{\mu}$ be a set of homogeneous generators of $I$ of 
degrees 
$d_1, \ldots, d_{\mu}$ respectively.

 Let $X = {\rm Proj}~R$; then we have an associated  canonical short exact
 sequence of locally free sheaves of $\sO_X$-modules
(moreover the sequence is locally split exact)

\begin{equation}\label{e2}
0\longto V\longto \oplus_i\sO_X(1-d_i)\longto \sO_X(1)\longto 0,\end{equation}
where $\sO_X(1-d_i)\longto \sO_X(1)$ is given by the multiplication by the 
 element  $h_i$.

For a coherent sheaf $\sQ$ of $\sO_X$-modules, the sequence of $\sO_X$-modules
 $$ 0\longto F^{n*}V\tensor \sQ(m)\longto
 \oplus_i\sQ(q-qd_i+m)\longto   \sQ(q+m)\longto 0$$
is exact as the short exact sequence~(\ref{e2}) is locally 
split as $\sO_X$-modules (as usual, $q=p^n$ and $F^n$ is the $n^{th}$ iterate 
of the absolute Frobenius morphism).
Therefore we have  a long exact sequence of cohomologies
\begin{equation}\label{*}0\longto H^0(X, F^{n*}V\tensor \sQ(m))\longto
 \oplus_iH^0(X, \sQ(q-qd_i+m))\by{\phi_{m,q}(\sQ)} 
H^0(X, \sQ(q+m))\end{equation}

$$ \longto H^1(X, F^{n*}V\tensor\sQ(m))\longto \cdots,$$
for $m\geq 0$ and $q = p^n$.

We recall the definition of (Castelnuovo-Mumford) regularity.

\begin{defn}\label{d2}Let $\sQ$ be a coherent sheaf of $\sO_X$-modules and 
let $\sO_X(1)$ be a very ample line bundle on $X$. We say that
 $\sQ$ is ${\tilde m}$-regular (or ${\tilde m}$ is a regularity number of 
$\sQ$) with respect to $\sO_X(1)$, if
for all $m\geq {\tilde m}$
\begin{enumerate}
\item the canonical multiplication map
$H^0(X, \sQ(m))\tensor H^0(X, \sO_X(1))\longto 
H^0(X, \sQ(m+1))$ is surjective and
\item $H^i(X, \sQ(m-i)) = 0$, for $i\geq 1$.\end{enumerate} 
\end{defn}

\begin{notations}\label{n2}
\begin{enumerate}
\item Let 
$$P_{(R,{\bf m})}(m) = {\tilde e_0}{{m+d-1}\choose{d}}
-{\tilde e_1}{{m+d-2}\choose{d-1}}+\cdots +(-1)^{d}{\tilde e_{d}}$$
be the Hilbert-Samuel polynomial of $R$ with respect to the graded maximal
 ideal ${\bf m}$. Therefore 
$$\chi(X, \sO_X(m)) = {\tilde e_0}{{m+d-1}\choose{d-1}}
-{\tilde e_1}{{m+d-2}\choose{d-2}}+\cdots +(-1)^{d-1}{\tilde e_{d-1}}.$$
\item Let ${\bar m}$ be a positive integer such that 
\begin{enumerate}
\item ${\bar m}$ is a regularity number for $(X, \sO_X(1))$, and 
\item $R_m = h^0(X,\sO_X(m))$, for all $m\geq {\bar m}$. In particular
$\ell(R/{\bf m}^m) = P_{(R, {\bf m})}(m)$, for all $m\geq {\bar m}$. 
\end{enumerate}
\item Let $l_1= h^0(X, \sO_X(1))$ and 
\item let $n_0\geq 1$ be an integer such that 
$R_{n_0}\subseteq I$.
\item
We  denote $\dim_k{\rm Coker}~\phi_{m,q}(\sQ)$ by ${\rm coker}~\phi_{m,q}(\sQ)$.
\end{enumerate}
\end{notations}

\begin{rmk}\label{r1}
\begin{enumerate}
 \item  The canonical map 
$\oplus_mR_m \longto \oplus_mH^0(X, \sO_X(m))$ is injective, as $R$ is an integral domain.
\item For  $m+q \geq m_R(q) = {\bar m}+ n_0(\sum_id_i)q$, we have 
  $\coker~\phi_{m,q}(\sO_X) = \ell(R/I^{[q]})_{m+q} = 0$:
 Because 
$m_R(q) =  {\bar m} + n_0\mu q + n_0(\sum_i(d_i-1))q \implies 
 q-qd_i+m\geq {\bar m},$
 for all $i$.
Hence the map $\phi_{m, q}(\sO_X)$ is the map 
$\oplus_iR_{q-qd_i+m}\longrightarrow R_{m+q}$,
 where the map $R_{q-qd_i+m}\rightarrow R_{m+q}$ is given by 
 multiplication by
the element $h_i^q$.
Therefore, $\coker~\phi_{m,q}(\sO_X) = \ell(R/I^{[q]})_{m+q}$. Moreover, 
by Lemma~\ref{l1}, we have $\ell(R/I^{[q]})_{m+q} = 0$, as ${m+q} \geq 
{\bar m}+n_0\mu q$.

\item For  $C_R = (\mu)h^0(X, \sO_X({\bar m}))$, we have  
\begin{equation}\label{ee1}
|\coker~\phi_{m,q}(\sO_X) - \ell(R/I^{[q]})_{m+q} |\leq C_R,\end{equation}
for all $n, m\geq 0$ and $q = p^n$: Because

\vspace{5pt}
\noindent{if} $m+q < {\bar m}$, then
$$|\coker~\phi_{m,q}(\sO_X) - \ell(R/I^{[q]})_{m+q} |\leq h^0(X, \sO_X(m+q))
 \leq h^0(X, \sO_X({\bar m})).$$
If $m+q \geq {\bar m}$, then $h^0(X, \sO_X(m+q))= \ell(R_{m+q})$ and therefore 
$$|\coker~\phi_{m,q}(\sO_X) - \ell(R/I^{[q]})_{m+q}| \leq 
\sum^{\mu}_{i=1}|h^0(X, \sO_X(q-qd_i+m)) -\ell(R_{q-qd_i+m})|.$$
Now, if $q-qd_i+m < {\bar m}$ then 
$\ell(R_{q-qd_i+m})\leq h^0(X, \sO_X(q-qd_i+m)) \leq h^0(X, \sO_X({\bar m})$,
 and if 
  $q-qd_i+m \geq {\bar m}$ then $R_{q-qd_i+m} =  H^0(X, \sO_X(q-qd_i+m))$.

Hence 
 $$|\coker~\phi_{m,q}(\sO_X) - \ell(R/I^{[q]})_{m+q} |\leq
 \mu h^0(X, \sO_X({\bar m})).$$
\end{enumerate}
\end{rmk}

\begin{defn}\label{d1}Let $\sQ$ be a coherent sheaf of 
$\sO_X$-modules of dimension ${\bar d}$ and let ${\tilde m}\geq 1$ be the 
least integer which is  
 a regularity number for $\sQ$ with respect to $\sO_X(1)$. 
Then we define $C_0(\sQ)$ and $D_0(\sQ)$ as follows:
Let $a_1, \ldots, a_{\bar d}\in H^0(X,\sO_X(1))$ be such that we have a short 
exact sequence of $\sO_X$-modules 
$$0\rightarrow \sQ_i(-1)\by{a_i} \sQ_i \rightarrow \sQ_{i-1}\rightarrow 0,
\quad\mbox{for}\quad 0< i\leq {\bar d},$$
where $\sQ_d = \sQ$ and $\sQ_i = \sQ/(a_{{\bar d}}, \ldots,a_{i+1})\sQ$, 
for $0\leq i<{\bar d}$, with $\dim~\sQ_i = i$.
We define 
$$C_0(\sQ) = \mbox{min}\{\sum_{ i=0}^{\bar d}h^0(X,\sQ_i)\mid
 a_1,\ldots, a_{\bar d}~~~\mbox{is a}~~~\sQ-\mbox{sequence as above}\},$$
$$D_0(\sQ) = h^0(X,\sQ({\tilde m})) + 
2({\bar d} +1) \left(\mbox{max}\{q_0, q_1, \ldots, q_{{\bar d}}\}\right),$$
where
$$\chi(X, \sQ(m)) = q_0{{m+{\bar d}}\choose{{\bar d}}}
- q_1{{m+{\bar d}-1}\choose{{\bar d}-1}}+\cdots +(-1)^{\bar d}
 q_{\bar d}$$
is the Hilbert polynomial of $\sQ$.
\end{defn}

A more general version of the  following lemma has been  stated and proved 
in Lemma~2.5 of  [T4]. 
Here we state and prove a relevant part of it, for a self-contained treatment 
(avoiding additional complications).

\begin{lemma}\label{l2} Let $\sQ$ be a coherent sheaf of $\sO_X$-modules 
of dimension ${\bar d}$.  Let $P$ be a locally-free sheaf of $\sO_X$-modules
 which fits into a short exact sequence of locally-free
 sheaves of $\sO_X$-modules
\begin{equation}\label{e15}0\longto P\longto  \oplus_{i} \sO_{X}(-b_i)
\longto P''\longto 0,~~~~\mbox{where}~~~ b_i 
\geq 0.\end{equation}
Then, for ${\tilde \mu} = \Rank(P)+ \Rank(P'')$ and, for all $n, m \geq 0$, 
 we have $$h^0(X, \sQ(m+q))\leq D_{0}(\sQ)(m+q)^{\bar 
d}\quad\mbox{and}\quad h^0(F^{n*}P\tensor \sQ(m)) \leq ({\tilde 
\mu}){C_{0}}(\sQ)(m^{{\bar d}}+1).$$ \end{lemma}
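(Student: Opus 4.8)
The plan is to prove the two estimates separately, starting with the bound on $h^0(X,\sQ(m+q))$, which is really just a statement about the Hilbert polynomial of $\sQ$ and its regularity. Since $\sQ$ is ${\tilde m}$-regular, it is $t$-regular for all $t\geq {\tilde m}$, so in the range $m+q\geq {\tilde m}$ we have $h^0(X,\sQ(m+q))=\chi(X,\sQ(m+q))$, and the latter is the value at $m+q$ of a polynomial of degree ${\bar d}$ whose coefficients (written in the binomial basis) are the $q_j$. A crude termwise estimate on $\binom{t+{\bar d}-j}{{\bar d}-j}$ by $(t+{\bar d})^{{\bar d}}$, absorbed into the factor $2({\bar d}+1)\max_j\{q_j\}$, gives the claimed bound for $m+q\geq{\tilde m}$; for the finitely many values $m+q<{\tilde m}$, one uses the short exact sequences $0\to\sQ_i(-1)\to\sQ_i\to\sQ_{i-1}\to 0$ from Definition~\ref{d1} to bound $h^0(X,\sQ(t))$ for small $t$ by $h^0(X,\sQ({\tilde m}))$ plus lower-dimensional contributions, all of which are swallowed by the leading term $D_0(\sQ)(m+q)^{{\bar d}}$ since $(m+q)^{{\bar d}}\geq 1$. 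The precise bookkeeping is routine once one commits to the definition of $D_0(\sQ)$; I would present it compactly rather than grinding through every inequality.

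For the second estimate I would tensor the defining sequence (\ref{e15}) with $F^{n*}(-)$, which is exact since (\ref{e15}) is a sequence of locally free sheaves, to get
$$0\longto F^{n*}P\longto \oplus_i \sO_X(-p^nb_i)\longto F^{n*}P''\longto 0.$$
Tensoring with $\sQ(m)$ (again locally split, so exact) and taking the long exact cohomology sequence yields
$$h^0(X,F^{n*}P\tensor\sQ(m))\leq \sum_i h^0(X,\sQ(m-p^nb_i)).$$
Since each $b_i\geq 0$, every term $\sQ(m-p^nb_i)$ is a twist of $\sQ$ by something $\leq m$, so it suffices to bound $h^0(X,\sQ(t))$ for $t\leq m$ by a constant multiple of $C_0(\sQ)(m^{{\bar d}}+1)$, and then multiply by the number ${\tilde\mu}=\Rank(P)+\Rank(P'')$ of summands (after noting $\oplus_i\sO_X(-b_i)$ has rank ${\tilde\mu}-\Rank(P'')\leq{\tilde\mu}$, or just using ${\tilde\mu}$ as a safe upper bound for the number of indices $i$).

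The heart of the matter — and the step I expect to be the main obstacle — is the uniform-in-$t$ bound $h^0(X,\sQ(t))\leq C_0(\sQ)(t^{{\bar d}}+1)$ valid for \emph{all} $t$, with a constant depending only on $\sQ$ and not on $t$. Here I would induct on ${\bar d}=\dim\sQ$ using the $\sQ$-sequence $a_1,\dots,a_{{\bar d}}\in H^0(X,\sO_X(1))$ of Definition~\ref{d1}. From $0\to\sQ_{{\bar d}}(t-1)\xrightarrow{a_{{\bar d}}}\sQ_{{\bar d}}(t)\to\sQ_{{\bar d}-1}(t)\to 0$ one gets $h^0(X,\sQ(t))-h^0(X,\sQ(t-1))\leq h^0(X,\sQ_{{\bar d}-1}(t))$, and $\sQ_{{\bar d}-1}$ has dimension ${\bar d}-1$, so by induction its $h^0$ grows like $t^{{\bar d}-1}$; telescoping the first differences from $t$ down to $0$ (or down to a negative value where $h^0$ vanishes) produces the $t^{{\bar d}}$ growth, with the accumulated constant being exactly the quantity $\sum_{i=0}^{{\bar d}}h^0(X,\sQ_i)$ that is minimized over all such sequences to define $C_0(\sQ)$. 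One must be a little careful that for $t<0$ the twists can have nonzero $h^0$ only in a bounded range (since $\sQ(t)$ has no sections once $t\ll 0$), which is why a term like $h^0(X,\sQ_0)$ — a length, hence finite — appears. Since a more general form of this is already Lemma~2.5 of [T4], I would state that the argument is the specialization of that one and only spell out the inductive telescoping, keeping the exposition self-contained but brief.
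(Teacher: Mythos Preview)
Your approach is correct and uses the same two ingredients as the paper --- the telescoping along a $\sQ$-regular sequence $a_1,\dots,a_{\bar d}\in H^0(X,\sO_X(1))$, and the embedding $F^{n*}P\hookrightarrow\oplus_j\sO_X(-qb_j)$ from (\ref{e15}) --- but you compose them in the opposite order. You apply (\ref{e15}) once at the top level to reduce $h^0(X,F^{n*}P\otimes\sQ(m))$ to a sum of terms $h^0(X,\sQ(m-qb_j))$, and then prove a uniform bound $h^0(X,\sQ(t))\leq C_0(\sQ)(t^{\bar d}+1)$ by induction on $\dim\sQ$. The paper instead carries $F^{n*}P$ through the entire induction: it applies (\ref{e15}) to each quotient $\sQ_i$ to get $h^0(X,F^{n*}P\otimes\sQ_i)\leq\tilde\mu\,h^0(X,\sQ_i)$, and telescopes $h^0(X,F^{n*}P\otimes\sQ_i(m))$ directly. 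Both arrive at the same bound; your separation of concerns is arguably cleaner.

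Two small points to fix. First, your parenthetical ``$\oplus_i\sO_X(-b_i)$ has rank $\tilde\mu-\Rank(P'')$'' is wrong: by additivity of rank in (\ref{e15}) the middle term has rank exactly $\tilde\mu$, so the number of summands is $\tilde\mu$ on the nose and no fudge is needed. Second, your claim that ``$\sQ(t)$ has no sections once $t\ll 0$'' is not true for a general coherent sheaf (think of a sheaf with a zero-dimensional subsheaf). What is true, and what you actually need, is that $h^0(X,\sQ(t))\leq h^0(X,\sQ)$ for all $t\leq 0$: since $a_{\bar d}$ is a $\sQ$-nonzerodivisor of degree $1$, multiplication by $a_{\bar d}^{-t}$ gives an injection $\sQ(t)\hookrightarrow\sQ$, hence an injection on $H^0$. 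This is exactly the mechanism the paper uses (implicitly) when it writes $h^0(X,\sQ_i(-qb_j))\leq h^0(X,\sQ_i)$ ``as $-b_j\leq 0$''.
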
 

\begin{proof}Let 
${\tilde m}$ be  a regularity number for $\sQ$, then by Definition~\ref{d1}, 
we have  
$$h^0(X, \sQ(m+q))\leq D_{0}(\sQ)(m+q)^{\bar d},\quad\mbox{for all}\quad 
n, m \geq 0.$$

Let $\sQ_{{\bar d}} = \sQ$. Let 
$a_{{\bar d}}, \ldots, a_1\in H^0(X, \sO_X(1))$ with the exact 
sequence of $\sO_X$-modules 
$$
0\longto {\sQ_i}(-1)\longby{a_i} \sQ_i\longto \sQ_{i-1}\longto 0,$$
where $\sQ_i = \sQ_{{\bar d}}/(a_{{\bar d}}, \ldots, a_{i+1})\sQ_{{\bar d}}$,
for $0 \leq  i \leq {\bar d}$, and realizing the minimal value $C_0(\sQ)$. 
Now, by the exact sequence~(\ref{e15}),  we have the following
short exact 
sequence of $\sO_X$-sheaves
$$0\longto F^{n*}P\tensor \sQ_i \longto 
\oplus_j \sQ_i(-qb_j) \longto F^{n*}P''\tensor\sQ_i\longto 0.$$
This implies 
$H^0(X, F^{n*}P\tensor\sQ_i) \into 
 \oplus_jH^0(X, \sQ_i(-qb_j)).$
Therefore 
\begin{equation}\label{e11}h^0(X, F^{n*}P\tensor\sQ_i) 
\leq \sum_jh^0(X, \sQ_i(-qb_j)) \leq ({\tilde \mu})h^0(X, \sQ_i),\end{equation}
as $-b_j \leq  {0}$.
Since $F^{n*}P$ is a locally-free sheaf of $\sO_X$-modules, we have 
$$0\longto F^{n*}P\tensor{\sQ_i}(m-1)\longby{a_i} F^{n*}P\tensor\sQ_i(m)\longto
F^{n*}P\tensor\sQ_{i-1}(m)\longto 0,$$
which is a short exact sequence of $\sO_X$-sheaves. 
Now by induction on $i$, we prove that, for $m\geq 1$, 
$$h^0(X, F^{n*}P\tensor \sQ_i(m))\leq ({\tilde \mu})\left[h^0(X, \sQ_{i})+\cdots+
h^0(X, \sQ_0)\right](m^{i}).$$
For $i =0$, the inequality holds as 
 $h^0(X, F^{n*}P\tensor \sQ_0(m))\leq ({\tilde \mu})h^0(X, \sQ_0)$ 
(as $\dim~\sQ_0 =0$).

Now, for $m\geq 1$, by the inequality~\ref{e11} and by induction on $i$, we have 
$$h^0(X, F^{n*}P\tensor \sQ_i(m))  \leq h^0(X, F^{n*}P\tensor \sQ_i)
 + h^0(X, F^{n*}P\tensor \sQ_{i-1}(1))+ \cdots + 
h^0(X, F^{n*}P\tensor \sQ_{i-1}(m))$$
$$ \quad\quad\quad\quad\quad\quad\quad\quad\leq ({\tilde \mu})h^0(X, \sQ_i)+{\tilde \mu}
\left[h^0(X, \sQ_{i-1})+\cdots+
h^0(X, \sQ_0)\right](1+2^{i-1}+\cdots+m^{i-1})$$
$$ \leq ({\tilde \mu})
\left[h^0(X, \sQ_{i})+\cdots+
h^0(X, \sQ_0)\right]m^{i}.\quad\quad\quad\quad$$
This implies 
$$h^0(X, F^{n*}P\tensor \sQ(m)) = h^0(X, F^{n*}P\tensor \sQ_{\bar d}(m)) 
\leq {\tilde \mu}C_0(\sQ)m^{\bar d},$$ 
for all $m\geq 1$. Therefore, for all $0\leq i\leq {\bar d}$
 and for all $m\geq 0$, 
we have $h^0(X, F^{n*}P\tensor \sQ(m))\leq 
{\tilde \mu}C_0(\sQ)(m^{\bar d}+1)$.
This proves the lemma.\end{proof}

\begin{lemma}\label{l4}There exists a short exact sequence of coherent 
sheaves of $\sO_X$-modules 
$$0\longto \oplus^{p^{d-1}}\sO_X(-d)\longto F_*\sO_X\longto \sQ\longto 0,$$ 
where $\sQ$ is a coherent sheaf of 
$\sO_X$-modules such that $\dim~\supp(\sQ) < d-1$. \end{lemma}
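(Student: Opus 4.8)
The plan is to produce the exact sequence by a direct computation with the Frobenius pushforward of the structure sheaf on the standard graded setting, then truncate to the piece supported in dimension $d-1$. First I would work locally: on the affine chart $\Spec A$ of $X = \Proj R$, with $A$ an integral domain of dimension $d-1$ over $k$ with $\Char k = p$, the module $F_*A$ is $A$ viewed as a module over itself via the $p$-th power map $A \to A$. I would want to compare $F_*\sO_X$ to a sum of $p^{d-1}$ copies of $\sO_X(-d)$; the natural thing is to observe that $F_*\sO_X$, twisted appropriately, is generically free of rank $p^{d-1}$ over $\sO_X$ (since $X$ is integral of dimension $d-1$, the generic stalk is the function field extension $K^{1/p}/K$, which has degree $p^{d-1}$ as $K$ has transcendence degree $d-1$ over the perfect field $k$). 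So the map $\oplus^{p^{d-1}}\sO_X(-d) \to F_*\sO_X$ should be an isomorphism away from a closed subset of dimension $< d-1$, giving $\sQ$ as the cokernel with $\dim\supp\sQ < d-1$.

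The key steps, in order: (1) identify $F_*\sO_X$ with a graded sheaf and compute, using the grading of $R$, that $R$ as a module over $R^{(p)} = $ (image of Frobenius, which is the $p$-th Veronese-type subring up to the Frobenius twist) is generated in the right degrees — this is where the twist by $\sO_X(-d)$ and the number $p^{d-1}$ come from; concretely the monomials of degree $< p$ in suitable local coordinates give a generating set, and there are $p^{d-1}$ of them in the relevant piece. (2) Build the map $\oplus^{p^{d-1}}\sO_X(-d) \to F_*\sO_X$ from this generating set (one must check the degree shift: the Frobenius on $R$ sends $R_m$ into $(F_*R)_{pm}$, so after the standard identification the source summands must be twisted by $-d$ to land correctly; I would pin down $d$ as the exponent forced by requiring the generic rank and the degrees to match for the $d-1$ variables on a chart, accounting for the $\sO_X(1)$ normalization). (3) Show the map is injective: both sheaves are torsion-free (subsheaves of constant sheaves of $K$-vector spaces since $X$ is integral), and the map is an isomorphism at the generic point by the degree count $[K : K^p] = p^{d-1}$, so the kernel is a torsion subsheaf of a torsion-free sheaf, hence zero. (4) Let $\sQ$ be the cokernel; since the map is an isomorphism at the generic point, and more generally at all codimension-zero points, $\sQ$ is supported on a proper closed subset, i.e. $\dim\supp\sQ < d-1$.

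The main obstacle I expect is step (2): getting the twist $\sO_X(-d)$ and the rank $p^{d-1}$ simultaneously correct, i.e. producing a genuine map of sheaves on $X$ (not just a generic identification) with the stated degrees. One has to be careful that "$p^{d-1}$ copies of $\sO_X(-d)$" is really the right bookkeeping: the rank $p^{d-1}$ is forced by $\dim X = d-1$, but the single twist $-d$ (rather than a spread of twists $\sO_X(-a_j)$ for various $a_j$) is a genuine claim that needs the standard-graded hypothesis. The cleanest route is probably to choose a finite surjection $\pi : X \to \P^{d-1}$ given by general linear forms (a Noether normalization of $R$ in degree $1$), use that $F_*\sO_{\P^{d-1}}(-d) \cong \oplus \sO_{\P^{d-1}}(-1)^{\oplus ?}$-type facts or rather the elementary computation $F_*\sO_{\P^{d-1}} \cong \sO \oplus \sO(-1)^{\oplus a_1} \oplus \cdots$, and then push down: $F_*\sO_X$ is an $\sO_X$-module whose pushforward to $\P^{d-1}$ is controlled, and on $\P^{d-1}$ one has the explicit decomposition with the top twist being $\sO(-d)$ appearing with multiplicity $p^{d-1}$ after accounting for the monomial basis $\{x_1^{i_1}\cdots x_{d-1}^{i_{d-1}} : 0 \le i_j \le p-1\}$ of $F_*\sO_{\P^{d-1}}$ in affine coordinates and the degree shift. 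Once the map on $X$ is written down and shown generically iso, injectivity and the dimension bound on $\sQ$ are formal from torsion-freeness; so the only real work is the explicit construction and degree check.
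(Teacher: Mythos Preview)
Your approach via a degree-one Noether normalization $\pi\colon X \to \P^{d-1}_k$ and the explicit splitting of $F_*\sO_{\P^{d-1}}$ is exactly the paper's route, but two details in your outline need correction. First, the splitting on $S=\P^{d-1}$ is $F_*\sO_S \cong \bigoplus_{i=0}^{d-1}\sO_S(-i)^{\oplus n_i}$ with $\sum_i n_i=p^{d-1}$; the twists run only from $0$ to $-(d-1)$, so $\sO(-d)$ does not appear at all, let alone with multiplicity $p^{d-1}$. The paper obtains the uniform twist $-d$ by precomposing with the evident inclusion $\sO_X(-d)^{\oplus p^{d-1}}\hookrightarrow \bigoplus_i\sO_X(-i)^{\oplus n_i}$ (one copy of $\sO_X(-d)$ into each summand), which is injective and generically an isomorphism since $-d<-i$ for every $i\le d-1$.

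Second, the passage from $S$ to $X$ is not ``push down'' but pullback followed by a base-change map: one applies $\pi^*$ to the splitting to get $\pi^*F_*\sO_S\cong\bigoplus_i\sO_X(-i)^{\oplus n_i}$ on $X$, and then uses the canonical map $\beta\colon \pi^*F_*\sO_S\to F_*\sO_X$, which is generically an isomorphism precisely because $\pi$ is \emph{separable} (this is the step your sketch elides, and it is where the hypothesis that $k$ is algebraically closed, hence perfect, is used). The composite $\sO_X(-d)^{\oplus p^{d-1}}\to\pi^*F_*\sO_S\to F_*\sO_X$ is then generically an isomorphism, hence injective since $X$ is integral, and your steps (3)--(4) finish the argument exactly as in the paper.
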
 
\begin{proof}Note that $X = {\rm Proj}~R$, where 
 ${R} = \oplus_{n\geq 0}R_n$,  
is a standard graded domain such that $R_0$ is an algebraically closed field.
 Therefore there exists a Noether 
normalization $$k[X_0, \ldots, X_{d-1}] \longto {R},$$ which is an 
injective,
 finite separable graded map of degree $0$ (as $k$ is an algebraically 
closed field). This induces a finite separable affine map 
$\pi:X\longrightarrow \P^{d-1}_k = S$.

Note that there is also an isomorphism 
$$\eta: \sO_S^{\oplus n_0}\oplus \sO_S(-1)^{\oplus n_1}\oplus
\cdots \oplus \sO_S(-d+1)^{\oplus n_{d-1}}\longto F_*\sO_S$$ 
of 
$\sO_S$-modules, where $\sum n_i = p^{d-1}$. 

Now the isomorphism of $\eta$ implies that the map ${\pi^*(\eta)}: 
\oplus_{i=0}^{d-1}\sO_X(-i)^{\oplus n_i}\longto \pi^*F_*\sO_s$ is an 
isomorphism of $\sO_X$-sheaves. Since $0\leq i \leq d-1$, we also have 
an injective and generically isomorphic map of $\sO_X$-sheaves 
$$\oplus^{p^{d-1}}\sO_X(-d) \longto \oplus_{i=0}^{d-1}\sO_X(-i)^{\oplus 
n_i}.$$ Composing this map with $\pi^*(\eta)$ gives an injective and 
generically isomorphic map of $\sO_X$-sheaves $${\alpha}: 
\oplus^{p^{d-1}}\sO_X(-d) \longto \pi^*F_*\sO_S.$$

Since $\pi$ is separable, there is a canonical map 
$\beta: \pi^*F_*\sO_S \longto 
F_*\sO_X$, of sheaves 
of $\sO_X$-modules, which is generically isomorphic.

Now we have the composite map
 $$\beta\circ \alpha:\oplus^{p^{d-1}}\sO_X(-d)\longrightarrow
 \pi^*F_*\sO_S \rightarrow F_*\sO_X $$ 
which is generically an isomorphism. Hence 
$\dim{\rm Coker}(\beta\circ\alpha) < \dim~X =d-1$ and  the map
 ${\beta\circ\alpha}: \oplus^{p^{d-1}}\sO_X(-d)\longto F_*\sO_X $ is 
injective, as $X$ is an integral scheme.
 This proves the lemma. 
\end{proof}

\begin{lemma}\label{l5}Let $$0\longto \oplus^{p^{d-1}}\sO_X(-d)\longto
 F_*\sO_X\longto \sQ\longto 0$$
as in  Lemma~\ref{l4}. Then
\begin{enumerate}
\item $\sQ$ is ${\tilde m}$-regular, where ${\tilde m} = 
\mbox{max}\{{\bar m}+d, l_1-1\}$, where ${\bar m}$ and $l_1$ 
are as given in Notations~\ref{n2}.
\item For a given $d$, there exists a universal polynomial function 
${\bar P^d_{1}}(X_0, \ldots, X_{d-1}, Y)$
with rational coefficients  
(and hence independent of $p$) such that 
$$2C_0(\sQ) + D_0(\sQ) \leq 
p^{d-1}{\bar P^d_{1}}({\tilde e_0}, {\tilde e_1}, \ldots, 
{\tilde e_{d-1}}, {\bar m}).$$
\end{enumerate}
\end{lemma}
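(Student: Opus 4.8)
The plan is to prove both assertions of Lemma~\ref{l5} by analyzing the short exact sequence from Lemma~\ref{l4} in detail, tracking how the invariants of $\sQ$ depend on $p$.

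\textbf{Part (1): Regularity of $\sQ$.} First I would use the long cohomology sequence associated to $0\to \oplus^{p^{d-1}}\sO_X(-d)\to F_*\sO_X\to \sQ\to 0$ twisted by $\sO_X(m)$. The key point is that $\sQ$ is supported in dimension $< d-1$, so $H^i(X,\sQ(m))=0$ automatically for $i\geq d-1$, and we only need to control $H^i$ for $1\leq i\leq d-2$ together with the surjectivity of the multiplication map $H^0(X,\sQ(m))\tensor H^0(X,\sO_X(1))\to H^0(X,\sQ(m+1))$. The vanishing of $H^i(X,\sQ(m-i))$ for $i\geq 1$ and $m\geq{\tilde m}$ will follow from the vanishing of $H^i(X,F_*\sO_X(m-i))=H^i(X,\sO_X(p(m-i)))$ — which holds once $p(m-i)$ exceeds the regularity bound ${\bar m}$, hence once $m-i\geq {\bar m}$, i.e. $m\geq {\bar m}+d$ suffices since $i\leq d$ — and the vanishing of $H^{i+1}(X,\sO_X(-d)(m-i))=H^{i+1}(X,\sO_X(m-i-d))$, which again holds for $m-i-d$ in the regular range but also for $i+1\geq d-1$ trivially (sheaf on a $(d-1)$-dimensional scheme); a little care shows $m\geq{\bar m}+d$ handles all relevant cases. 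For the surjectivity condition one uses that $F_*\sO_X(m)$ is generated by global sections and its $H^0$ surjects appropriately for $m\geq {\bar m}$, while the term $l_1-1$ enters to handle the low-degree twists $\sO_X(-d)(m)=\sO_X(m-d)$ where one needs $H^1(X,\sO_X(m-d))$-type vanishing to lift sections; combining gives ${\tilde m}=\max\{{\bar m}+d, l_1-1\}$. The main care here is bookkeeping the indices so that every cohomology group that could obstruct ${\tilde m}$-regularity of $\sQ$ either vanishes by the choice of ${\tilde m}$ or vanishes for dimension reasons.

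\textbf{Part (2): Polynomial bound on $2C_0(\sQ)+D_0(\sQ)$.} Here I would bound $C_0(\sQ)$ and $D_0(\sQ)$ separately, each in terms of the Hilbert polynomial coefficients of $\sQ$, then express those in terms of ${\tilde e_0},\ldots,{\tilde e_{d-1}}$ and $p$. From the defining sequence, $\chi(X,\sQ(m))=\chi(X,F_*\sO_X(m))-p^{d-1}\chi(X,\sO_X(m-d))=\chi(X,\sO_X(pm))-p^{d-1}\chi(X,\sO_X(m-d))$; substituting the formula for $\chi(X,\sO_X(\cdot))$ from Notations~\ref{n2}(1) expresses the coefficients $q_0,\ldots,q_{\bar d}$ of the Hilbert polynomial of $\sQ$ as explicit $\Q$-linear (in fact polynomial) combinations of ${\tilde e_0},\ldots,{\tilde e_{d-1}}$ with coefficients that are polynomials in $p$ — and one checks the leading terms cancel so that $\deg\chi(X,\sQ(m))<d-1$, consistent with Part of Lemma~\ref{l4}, with all coefficients bounded by $p^{d-1}$ times a universal polynomial in the ${\tilde e_i}$. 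Then $D_0(\sQ)=h^0(X,\sQ({\tilde m}))+2({\bar d}+1)\max_i\{q_i\}$: the first term is bounded via the surjection $H^0(X,F_*\sO_X({\tilde m}))\surj \cdots$ by $h^0(X,\sO_X(p{\tilde m}))$, which for ${\tilde m}\geq {\bar m}$ equals $P_{(R,{\bf m})}$-type expression, again $\leq p^{d-1}$ times a universal polynomial in the ${\tilde e_i}$ and ${\bar m}$; the second term is controlled by the $q_i$ bound just obtained. Finally $C_0(\sQ)=\min\sum_{i=0}^{\bar d}h^0(X,\sQ_i)$ over $\sQ$-sequences: picking one such sequence and using that each $\sQ_i$ is a successive hyperplane section, $h^0(X,\sQ_i)$ is bounded by $h^0(X,\sQ({\tilde m}))$ (for the $(d-2)$-dimensional $\sQ_{d-2}$) plus lower-dimensional contributions, each again bounded through the same mechanism; summing over the $\leq d-1$ terms keeps everything $\leq p^{d-1}$ times a universal polynomial. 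Collecting constants yields the asserted ${\bar P^d_1}({\tilde e_0},\ldots,{\tilde e_{d-1}},{\bar m})$.

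\textbf{Expected main obstacle.} The delicate point is the $C_0(\sQ)$ bound: $C_0$ is defined as a minimum over all $\sQ$-sequences $a_1,\ldots,a_{\bar d}$, so I only need \emph{one} good sequence, but I must then bound $h^0(X,\sQ_i)$ for the intermediate quotients $\sQ_i$ \emph{without} knowing their Hilbert polynomials explicitly — the ${\tilde m}$-regularity from Part (1) is the tool, since an ${\tilde m}$-regular sheaf has $h^0$ of its twists controlled by its value at ${\tilde m}$, and hyperplane sections of ${\tilde m}$-regular sheaves are ${\tilde m}$-regular. So the real work is verifying that ${\tilde m}$ (which is $O(1)$ in $p$ up to the $l_1$ term, and $l_1=h^0(X,\sO_X(1))$ is itself bounded by ${\tilde e_0}$-type data independent of $p$) lets one absorb every $h^0$ into the single universal polynomial $p^{d-1}{\bar P^d_1}$. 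I expect the argument to go through cleanly once the regularity statement in Part (1) is in hand, with the only genuine subtlety being the careful separation of the $p$-dependence (always a single factor $p^{d-1}$, coming from $\rank F_*\sO_X = p^{d-1}$ and from $F^*$ scaling degrees by $p$) from the $p$-independent universal polynomial in ${\tilde e_0},\ldots,{\tilde e_{d-1}},{\bar m}$.
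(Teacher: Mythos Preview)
Your overall plan is the paper's: long exact sequence for Part~(1), Hilbert-polynomial bounds (via the $\chi$-computation you describe, which is exactly the content of Lemma~\ref{l9}) together with $\tilde m$-regularity of the $\sQ_i$ for Part~(2). Your $H^i$-vanishing argument and your Part~(2) sketch are both essentially what the paper does.

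The genuine gap is in the multiplication-map step of Part~(1), where you have the roles of $\bar m+d$ and $l_1-1$ swapped. You write that surjectivity of $H^0(F_*\sO_X(m))\otimes H^0(\sO_X(1))\to H^0(F_*\sO_X(m+1))$ comes from $\bar m$-regularity, and that $l_1-1$ handles an $H^1(\sO_X(m-d))$-vanishing. In fact the $H^1$-vanishing (used to get $H^0(F_*\sO_X(m))\surj H^0(\sQ(m))$) is precisely what $m\ge\bar m+d$ gives. The multiplication map, by contrast, is \emph{not} governed by regularity of $\sO_X$: because $F_*\sO_X$ is an $\sO_X$-module via Frobenius, the map in question is
\[
H^0(X,\sO_X(mp))\otimes H^0(X,\sO_X(1))^{[p]}\longrightarrow H^0(X,\sO_X(mp+p)),
\]
where the second tensor factor consists of $p$-th powers. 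Ordinary $\bar m$-regularity (which controls $H^0(\sO_X(j))\otimes H^0(\sO_X(1))\to H^0(\sO_X(j+1))$) says nothing about this. The paper's argument is combinatorial: since $R$ is standard graded with $\dim_k R_1\le l_1$, any monomial of total degree $(m+1)p$ in a set of $R_1$-generators has, by pigeonhole, some exponent $\ge p$ once $m+1\ge l_1$; hence it factors as an element of $R_{mp}$ times the $p$-th power of a generator, giving surjectivity of $R_{mp}\otimes R_1^{[p]}\to R_{mp+p}$ for $m\ge l_1-1$. This specific mechanism is absent from your proposal and is where $l_1-1$ actually enters.
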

\begin{proof}(1) The above short exact sequence of $\sO_X$-sheaves gives a long 
exact sequence of cohomologies
$$ \oplus^{p^{d-1}}H^i(X, \sO_X(m-d))\longto
H^i(X, \sO_X(mp))\longto 
H^i(X, \sQ(m))
\longto \oplus^{p^{d-1}}H^{i+1}(X, \sO_X(m-d)).$$
But $h^i(X, \sO_X(m-d-i)) = 0$, for all $m \geq {\bar m}+d$ and $i\geq 1$, 
which implies that if  $m \geq {\bar m}+d$ then 
$h^i(X, \sQ(m-i)) = 0$, for  $i\geq 1$, and the canonical  map
$$f_{1,m}:H^0(X, (F_*\sO_X)(m))\longto H^0(X, \sQ(m))$$ 

is surjective. Moreover the canonical map 
$$H^0(X, (F_*\sO_X)(m))\tensor H^0(X, \sO_X(1)) \longto 
H^0(X, (F_*\sO_X)(m+1))$$ is
$$f_{2,m}: H^0(X, \sO_X(mp))\tensor H^0(X, \sO_X(1))^{[p]}\longto 
H^0(X, \sO_X(mp+p)),$$
is surjective for $m\geq {\tilde m}$ because it fits into the following 
 canonical diagram
$$\begin{array}{ccc}
R_{mp}\tensor R_1^{[p]}& \longto & R_{mp+p}\\
\downarrow & {} & \downarrow\\
H^0(X, \sO_X(mp))\tensor H^0(X, \sO_X(1))^{[p]}& \longby{f_{2,m}} & 
H^0(X, \sO_X(mp+p))\end{array}$$
where the top horizontal map is surjective for 
 $m\geq l_1-1$. 
Now the following commutative diagram of canonical maps 
$$\begin{array}{ccc}
H^0(X, (F_*\sO_X)(m))\tensor H^0(X, \sO_X(1)) & \longto  & 
H^0(X,\sQ(m))\tensor H^0(X,\sO_X(1))\\
\downarrow{f_{2,m}} & & \downarrow\\
H^0(X, (F_*\sO_X)(m+1)) & \longby{f_{1,m+1}} & 
H^0(X, \sQ(m+1))\end{array}$$
implies that the second vertical map is surjective, 
for $m\geq {\tilde m}$, as the maps $f_{2, m}$ and $f_{1,m+1}$
 surjective. This proves that $\sQ$ is ${\tilde m}$-regular. Hence the
 assertion (1). 

\vspace{5pt}
\noindent{(2)}\quad If 
\begin{equation}\label{e12}
\chi(X, \sQ(m)) = {q_0}{{m+d-2}\choose{d-2}}
-{q_1}{{m+d-3}\choose{d-3}}+\cdots +(-1)^{d-2}{q_{d-2}},\end{equation}
then by Lemma~\ref{l9} (in the Appendix, below),
$$|q_i| \leq p^{d-1}P^d_i({\tilde e_0}, \ldots, {\tilde e_{i+1}}),$$
where  $P^d_i(X_0, \ldots, X_{i+1})$ is a universal 
polynomial 
function with rational coefficients. 

Now, $\sQ$ is ${\tilde m}$-regular implies that, for $0\leq i < d$,  $\sQ_i:= 
\sQ/(a_{\bar d}, \ldots, a_{i+1})\sQ$ is ${\tilde m}$-regular, 
for any $\sQ$-sequence $a_1, \ldots, a_{\bar d} \in H^0(X, \sO_X(1))$.
Therefore 
$$\begin{array}{lcl}
 h^0(X,\sQ_i)& \leq & h^0(X, \sQ_i({\tilde m}))\leq h^0(X, \sQ_{i+1}({\tilde m}))
\leq \cdots\leq h^0(X, \sQ({\tilde m}))= \chi(X, \sQ({\tilde m}))\\
{} & \leq & \displaystyle{|{q_0}|{{{\tilde m}+d-2}\choose{d-2}}
+|{q_1}|{{{\tilde m}+d-3}\choose{d-3}}+\cdots + |{q_{d-2}}|}.\end{array}$$
This implies 
$h^0(X, \sQ_i) 
\leq p^{d-1}P^d({\tilde e_0}, \ldots, {\tilde e_{d-1}}, {\tilde m})$,
 where $P^d(X_0, \ldots, X_{d-1}, Y)$ is a universal
 polynomial function with rational coefficients.
Therefore 
$$C_0(\sQ) \leq 
(d-1)p^{d-1}P^d({\tilde e_0}, \ldots, {\tilde e_{d-1}},{\tilde m}).$$
The inequality for 
$D_0(\sQ)$ follows similarly. This proves the assertion~(2) and hence the 
lemma.\end{proof}

\begin{lemma}\label{l6} Given $d\geq 2$, there exist universal polynomials 
${\bar P_2^d}$, ${\bar P_3^d}$ $\in \Q[X_0, \ldots, X_{d-1}, Y]$ such that, 
if $X$ is an integral variety of dimension $d-1$ 
with Hilbert-polynomial and ${\bar m}$ as in Notations~\ref{n2}, and if 
there are short exact sequences of $\sO_X$-modules 
$$0\longto \sO_X(-m_0)\longto \sO_X \longto Y_1\longto 
0\quad\mbox{and}\quad 0\longto \sO_X\longto \sO_X(n_2) \longto Y_2\longto 0$$
then 
$$2C_0({Y_1})+D_0({Y_1}) \leq m_0^{d-1}
{\bar P_2^d}({\tilde e_0},\ldots, {\tilde e_{d-1}}, {\bar m}),$$
$$2C_0({Y_2})+D_0({Y_2}) \leq n_2^{d-1}{\bar P^d_3}({\tilde e_0},\ldots,
 {\tilde e_{d-1}}, {\bar m}),$$
where $m_0\geq 0$ and $n_2\geq 0$ are two integers.
 \end{lemma}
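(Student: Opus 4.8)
The plan is to handle $Y_1$ and $Y_2$ in parallel, reducing each to an application of Lemma~\ref{l5}'s machinery (i.e. bounds on $C_0$ and $D_0$ of a sheaf of dimension $d-2$ in terms of its Hilbert polynomial, which in turn is controlled by Lemma~\ref{l9} in the Appendix). First I would record that $\dim~Y_1 = \dim~Y_2 = d-2$: the map $\sO_X(-m_0)\longto \sO_X$ is injective with cokernel supported on a divisor (nonzero only because $m_0>0$), and similarly $\sO_X\longto \sO_X(n_2)$ is injective with cokernel of dimension $d-2$. The Hilbert polynomial of $Y_1$ is $\chi(X,\sO_X(m))-\chi(X,\sO_X(m-m_0))$ and that of $Y_2$ is $\chi(X,\sO_X(m+n_2))-\chi(X,\sO_X(m))$; expanding $\chi(X,\sO_X(m)) = {\tilde e_0}\binom{m+d-1}{d-1}-\cdots+(-1)^{d-1}{\tilde e_{d-1}}$ (Notations~\ref{n2}) and taking the finite difference, each coefficient $q_i$ of the Hilbert polynomial of $Y_j$ is a polynomial with rational coefficients in ${\tilde e_0},\ldots,{\tilde e_{d-1}}$ and in $m_0$ (resp. $n_2$), of degree at most $d-1$ in the shift parameter. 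This is the step where I would invoke, or re-derive in this simpler setting, the content of Lemma~\ref{l9}.

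Next I would produce a regularity number for $Y_1$ and $Y_2$ that is a universal polynomial in ${\bar m}$ (and, if needed, $m_0$ or $n_2$). For $Y_1$: from $0\to\sO_X(m-m_0)\to\sO_X(m)\to Y_1(m)\to 0$ one gets $H^i(X,Y_1(m-i))=0$ once $H^i(X,\sO_X(m-i))=0$ and $H^{i+1}(X,\sO_X(m-m_0-i))=0$, i.e. for $m\geq {\bar m}+m_0$; surjectivity of the multiplication map $H^0(Y_1(m))\otimes H^0(\sO_X(1))\to H^0(Y_1(m+1))$ follows from the corresponding surjectivity for $\sO_X$ (valid for $m\geq{\bar m}$, with the necessary $H^1$-vanishing to lift) by a diagram chase analogous to the one in the proof of Lemma~\ref{l5}(1). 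So $Y_1$ is ${\tilde m}_1$-regular with ${\tilde m}_1=\max\{{\bar m}+m_0,\,l_1-1\}$ or a similar explicit expression; $Y_2$ is handled the same way with $n_2$ in place of $m_0$. One must be a little careful that the regularity number may grow with $m_0$; this is harmless since the final bound already carries a factor $m_0^{d-1}$ (resp. $n_2^{d-1}$), and in fact, since $H^i(X,\sO_X(m-m_0-i))=0$ already holds for $m\geq {\bar m}$ when $m_0\geq 0$ because the argument only needs $m-m_0-i \le$ something — here I would double-check and, if regularity genuinely depends on $m_0$, simply absorb it.

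Then, with regularity numbers and Hilbert-polynomial coefficient bounds in hand, the estimates for $C_0(Y_j)$ and $D_0(Y_j)$ follow exactly as in the proof of Lemma~\ref{l5}(2): from Definition~\ref{d1}, $h^0(X,(Y_j)_i)\leq h^0(X,Y_j({\tilde m}_j))=\chi(X,Y_j({\tilde m}_j))$, which is bounded by $\sum|q_i|\binom{{\tilde m}_j+d-2-i}{d-2-i}$, hence by a universal polynomial in ${\tilde e_0},\ldots,{\tilde e_{d-1}},{\bar m}$ times $m_0^{d-1}$ (resp. $n_2^{d-1}$); summing over $0\le i\le d-2$ and combining with the analogous $D_0$ estimate gives $2C_0(Y_1)+D_0(Y_1)\leq m_0^{d-1}{\bar P_2^d}({\tilde e_0},\ldots,{\tilde e_{d-1}},{\bar m})$ and likewise for $Y_2$ with ${\bar P_3^d}$. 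I expect the main obstacle to be purely bookkeeping: keeping the dependence on the shift parameters $m_0,n_2$ isolated as a clean factor $m_0^{d-1}$ (resp. $n_2^{d-1}$) rather than letting it leak into the "universal polynomial" part, especially through the regularity number ${\tilde m}_j$; if the regularity number does depend on $m_0$, one should replace $m_0^{d-1}$ by a slightly larger monotone expression or, better, observe that for the stated short exact sequences the relevant $H^{i}$-vanishing is already available at $m={\bar m}$ and so ${\tilde m}_j$ can be taken independent of $m_0,n_2$. The edge cases $m_0=0$ and $n_2=0$ (where $Y_j=0$ and both sides are $0$, or the degenerate conventions for $C_0,D_0$) should be noted but are trivial.
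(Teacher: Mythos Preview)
Your proposal is essentially the paper's approach: show $Y_j$ has dimension $d-2$, produce an explicit regularity number, and then bound $C_0(Y_j)$ and $D_0(Y_j)$ exactly as in the proof of Lemma~\ref{l5}(2) using Lemma~\ref{l9} for the Hilbert-polynomial coefficients.

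Two points where the paper is sharper than your write-up. First, your hedged ``better'' option---that the regularity number for $Y_1$ can be taken independent of $m_0$---is wrong: from $0\to\sO_X(m-m_0)\to\sO_X(m)\to Y_1(m)\to 0$ the vanishing of $H^i(Y_1(m-i))$ needs $H^{i+1}(X,\sO_X(m-i-m_0))=0$, and this genuinely forces $m\geq \bar m+m_0$ in general. The paper simply takes $\bar m+m_0$ as the regularity number for $Y_1$ and absorbs the $m_0$-dependence. Second, and this is what dissolves your bookkeeping worry cleanly: instead of bounding $h^0(X,Y_{1i})$ via $\chi(X,Y_1(\bar m+m_0))=\sum_i(-1)^i q_i\binom{\bar m+m_0+d-2-i}{d-2-i}$ (which mixes the $m_0$-dependence in both the $q_i$ and the binomials), the paper uses the surjection $\sO_X\twoheadrightarrow Y_1$ to get
\[
h^0(X,Y_{1i})\leq h^0(X,Y_1(\bar m+m_0))\leq h^0(X,\sO_X(\bar m+m_0))=\chi(X,\sO_X(\bar m+m_0)),
\]
which is a polynomial of degree $d-1$ in $m_0$ with coefficients that are already universal polynomials in ${\tilde e_0},\ldots,{\tilde e_{d-1}},\bar m$. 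This gives the $m_0^{d-1}$ factor for $C_0(Y_1)$ (and the $h^0$-part of $D_0(Y_1)$) directly; the $q_i$-bounds from Lemma~\ref{l9} are then only needed for the $\max\{q_0,\ldots,q_{d-2}\}$ term in $D_0(Y_1)$. The argument for $Y_2$ is identical.
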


\begin{proof}Without loss of generality one can assume $m_0\geq 1$ 
and $n_0\geq 1$.
Since $\sO_X$ is ${\bar m}$-regular, the sheaf 
${Y_1}$ is ${\bar m}+m_0$-regular sheaf of 
$\sO_X$-modules of dimension $d-2$. Therefore, for any
 ${Y_1}$-sequence $a_1, \ldots, a_{d-2}$, the sheaf ${Y_{1i}}:=
{Y_1}/(a_{d-2}, \ldots, a_{i+1}){Y_1}$ is ${\bar m}+m_0$-regular, as 
$\sO_X$-modules.
This implies
$$\begin{array}{lcl}
h^0(X, {Y_{1i}}) &\leq & h^0(X, Y_{1i}({\bar m}+m_0)) \leq 
h^0(X, Y_1({\bar m}+m_0)) \leq h^0(X, \sO_X({\bar m}+m_0))\\
 & \leq & \displaystyle{m_0^{d-1}
 \left[{\tilde e_0}^2{{{\bar m}+1+d-2}\choose{d-1}}
+{\tilde e_1}^2{{{\bar m}+1+d-1}\choose{d-2}}+\cdots +{\tilde e_{d-1}}^2\right]}\\
 & \leq & {m_0}^{d-1}{\tilde P^d}({\tilde e_0}, \ldots, {\tilde e_{d-1}}, 
{\bar m}),\end{array}$$
 where ${\tilde P^d}(X_0, \ldots, X_{d-1}, Y)$ is 
a universal polynomial function with rational coefficients.

Let $e_i(Y)$ denote the  $i^{th}$ coefficient of the Hilbert polynomial of 
$(Y_1, \sO_X(1))$.
Then by Lemma~\ref{l9}, we have 
 $e_i({Y_1}) \leq m_0^{i+1}P^d_i({\tilde e_0}, \ldots, {\tilde e_{i}})$,
 where $P^d_i(X_0, \ldots, X_i)$ is a universal 
 polynomial with rational coefficients.
 
Now the bound  for $2C_0({Y_1})+D_0({Y_1})$ follows. The identical 
proof follows for ${Y_2}$. \end{proof} 

\begin{notations}\label{n1} For a pair $(R, I)$, where $R$ is a standard
graded ring of $\Char~p >0$ and of dimension $d\geq 2$, 
 we define (similar to  the sequence of functions which  had been defined in 
[T4]), 
a sequence of functions 
$\{f_n:[1, \infty)\rightarrow [0, \infty)\}_n$, as follows: 

Fix $n\in \N$ and denote $q = p^n$.
Let $ x\in [0,\infty)$ then there exists a unique nonnegative integer $m$ such that 
$(m+q)/q \leq x < (m+q+1)/q$. 
Then $$f_n(x) = 1/q^{d-1}\ell(R/I^{[q]})_{m+q}.$$ 
\end{notations}

\begin{lemma}\label{l1}Each $f_n:[1, \infty)\longto [0, \infty)$, defined as 
in Notations~\ref{n1},  is a compactly supported  
function such that 
$\cup_{n\geq 1}\mbox{Supp}~f_n \subseteq [1, n_0\mu]$, where 
$R_{n_0} \subseteq I$ and $\mu \geq \mu(I)$.
\end{lemma}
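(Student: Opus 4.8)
The plan is to show both assertions essentially by a degree bound on $\ell(R/I^{[q]})_{m+q}$. First I would recall that $f_n$ is supported only at those $x$ for which $\ell(R/I^{[q]})_{m+q}\neq 0$, where $m = m(x,n)$ is the unique nonnegative integer with $(m+q)/q \leq x < (m+q+1)/q$; so it suffices to find a uniform upper bound on the degrees $j = m+q$ at which $(R/I^{[q]})_j$ is nonzero, valid for all $n$ and all $q = p^n$. For the lower end, note that $x \geq 1$ forces $m \geq 0$, hence $j = m+q \geq q$, so the support certainly lies in $[1,\infty)$; compact support then follows from the upper bound.

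For the upper bound, the key input is the choice of $n_0$ in Notations~\ref{n2}: $R_{n_0} \subseteq I$, so $R_{n_0 q} \subseteq R_{n_0}^{[q]}\cdot(\text{stuff})$ — more precisely, since $R$ is standard graded, $R_{n_0 q} = (R_{n_0})^{q}$-span $\subseteq (I$-generators to the $q$-th power$)$, giving $R_{n_0 q} \subseteq I^{[q]}$ (here one uses that $R$ is generated in degree $1$, so every degree-$n_0 q$ monomial is a product of $q$ monomials each of degree $n_0$, each lying in $I$, and a product of $q$ elements of $I$ lies in $I^{[q]}$ after expanding — this is the standard argument that $\mathbf{m}^{Nq}\subseteq I^{[q]}$ once $\mathbf{m}^N\subseteq I$, applied with $N = n_0$). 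Consequently $(R/I^{[q]})_j = 0$ for all $j \geq n_0 q$. Therefore $f_n(x) = 0$ whenever $m+q \geq n_0 q$, i.e.\ whenever $x \geq (m+q+1)/q$ with $m+q \geq n_0 q$; a fortiori $f_n(x) = 0$ for $x \geq n_0$. Wait — more carefully: if $x \geq n_0$ then $m+q \geq xq - 1 \geq n_0 q - 1$, which is not quite $\geq n_0 q$; but since $m+q$ is an integer, $x \geq n_0$ gives $(m+q)/q \leq x$ need not force $m+q \geq n_0q$. The clean statement is: $f_n(x)\neq 0 \Rightarrow m+q < n_0 q \Rightarrow (m+q)/q < n_0 \Rightarrow x < (m+q+1)/q \leq n_0$, using $m+q+1 \leq n_0 q$. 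So $\mathrm{Supp}\,f_n \subseteq [1, n_0]$, and replacing $n_0$ by $n_0\mu$ (which only enlarges the interval, and matches the statement and Remark~\ref{r1}(2) where the relevant bound is $\bar m + n_0\mu q$) gives $\cup_n \mathrm{Supp}\,f_n \subseteq [1, n_0\mu]$.

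The main (and only real) obstacle is the containment $R_{n_0 q}\subseteq I^{[q]}$: one must be slightly careful that it is $I^{[q]}$ (Frobenius power) and not merely $I^q$ (ordinary power) that appears, but this is exactly where the standard-graded hypothesis and the inequality $\mathbf{m}^{Nq}\subseteq I^{[q]}$ (for $\mathbf{m}^N\subseteq I$) come in, using that $\mathbf{m}$ is generated in degree $1$. Everything else — the nonnegativity and the compactness of the support — is immediate once this degree vanishing is in hand. I would present the argument in the order: (i) reduce to bounding degrees of nonvanishing of $(R/I^{[q]})_\bullet$; (ii) establish $R_{n_0 q}\subseteq I^{[q]}$; (iii) translate the resulting vanishing into the support bound $[1, n_0\mu]$, uniformly in $n$.
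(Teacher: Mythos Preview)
Your proposal has a genuine gap: the containment $R_{n_0 q}\subseteq I^{[q]}$ (equivalently ${\bf m}^{n_0 q}\subseteq I^{[q]}$ once ${\bf m}^{n_0}\subseteq I$) is false in general. Take $R=k[x,y]$, $I={\bf m}=(x,y)$, $n_0=1$: then ${\bf m}^{q}$ contains $x^{q-1}y$, which does not lie in ${\bf m}^{[q]}=(x^q,y^q)$. Your parenthetical justification (``a product of $q$ elements of $I$ lies in $I^{[q]}$ after expanding'') is exactly the error --- a product of $q$ elements of $I$ lies in the ordinary power $I^q$, not in the Frobenius power $I^{[q]}$.

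The factor $\mu$ you dismissed as a cosmetic enlargement is the repair. The paper argues via the chain
\[
R_m \subseteq (R_{n_0})^{\mu q}\subseteq I^{\mu q}\subseteq I^{[q]}\quad\mbox{for}\quad m\geq n_0\mu q,
\]
where the last inclusion holds by pigeonhole: if $I=(h_1,\ldots,h_\mu)$, any monomial $h_{i_1}\cdots h_{i_{\mu q}}$ in the generators must repeat some $h_i$ at least $q$ times, hence is a multiple of $h_i^q\in I^{[q]}$. This gives $(R/I^{[q]})_m=0$ for $m\geq n_0\mu q$, and hence $\mathrm{Supp}\,f_n\subseteq[1,n_0\mu]$. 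Once you fix step~(ii) of your plan to produce the bound $n_0\mu q$ rather than $n_0 q$, the rest of your outline (reduction to a degree bound, translation to the support statement) is fine and matches the paper.
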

\begin{proof}
Since $R$ is a standard graded ring, for $m\geq n_0\mu q $, we have $R_m \subseteq (R_{n_0})^{\mu q}
\subseteq I^{\mu q} \subseteq I^{[q]}$.
 This implies
$\ell(R/I^{[q]})_{m} = 0$, if $m \geq  n_0\mu q$.
Therefore support~$f_n \subseteq [1, n_0\mu]$, for every $n\geq 0$. 
This proves the lemma. \end{proof}

\vspace{10pt}
\begin{propose}\label{p1} For $f_n$ as given in Notations~\ref{n1}, we have 
\begin{enumerate}
\item $|f_n(x)-f_{n+1}(x)| \leq C/p^{n-d+2},$ for every $x\in [1, \infty)$,
and for all  $n\geq 0$.
 \item In particular, $||f_n-f_{n+1}|| \leq C/p^{n-d+2}$ and 
$||f_{d-1}-f_d|| \leq C/p,$\end{enumerate}
 where 
\begin{equation}\label{e30} C = 2C_R+
\mu\left({\bar m}+n_0(\sum_{i=1}^{\mu} d_i)+1\right)^{d-2}({\bar P^d_1}+
d^{d-1}{\bar P^d_2}+
{\bar P^d_3})\end{equation}
and the integers ${\bar m}$ and $n_0$
 are given as in Notations~\ref{n2}, and $d_1, \ldots, d_{\mu}$
 are degrees of a chosen generators of $I$.
Moreover $C_R  = \mu h^0(X, \sO_X({\bar m}))$, for $X = {\rm Proj}~R$, and 
${\bar P^d_1}$, ${\bar P^d_2}$ and ${\bar P^d_3}$ are given as in
Lemma~\ref{l5} and Lemma~\ref{l6}.
 \end{propose}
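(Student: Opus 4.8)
The plan is to replace the graded lengths by the cokernels $\coker\,\phi_{m,q}(\sO_X)$ occurring in the cohomology sequence~(\ref{*}), and then to exploit that \emph{one extra Frobenius twist is the same as one extra pushforward by $F$}; this reduces the comparison of $f_n$ and $f_{n+1}$ to a comparison of $F_*(\sO_X(r))$ with a direct sum of line bundles, which is exactly what Lemmas~\ref{l4}--\ref{l6} are designed to handle.

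Fix $x\in[1,\infty)$ and $n\ge 0$, put $q=p^n$, and choose integers $m,M$ with $m+q=\lfloor xq\rfloor$ and $M+pq=\lfloor xpq\rfloor$. The elementary identity $\lfloor\lfloor xpq\rfloor/p\rfloor=\lfloor xq\rfloor$ shows that $M=pm+r$ with $0\le r<p$ and with the \emph{same} $m$. By Remark~\ref{r1} (inequality~(\ref{ee1})) we have $\ell(R/I^{[q]})_{m+q}=\coker\,\phi_{m,q}(\sO_X)+E_1$ and $\ell(R/I^{[pq]})_{M+pq}=\coker\,\phi_{M,pq}(\sO_X)+E_2$ with $|E_i|\le C_R=\mu\,h^0(X,\sO_X(\bar m))$; after division by $q^{d-1}$ and $(pq)^{d-1}$ these contribute at most $2C_R/p^{\,n-d+2}$ to $|f_n(x)-f_{n+1}(x)|$ (using $d\ge2$), which is the summand $2C_R$ of~(\ref{e30}).

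The key step is the identity $\coker\,\phi_{M,pq}(\sO_X)=\coker\,\phi_{m,q}\bigl(F_*(\sO_X(r))\bigr)$. Indeed $F^{(n+1)*}V=F^*(F^{n*}V)$, so in the sequence~(\ref{*}) for $q'=pq$ the map $\phi_{M,pq}(\sO_X)$ has $i$-th component multiplication by $h_i^{pq}$ on $\sO_X(pq-pqd_i+M)\to\sO_X(pq+M)$; since $pq+M=p(q+m)+r$, this map equals $\mathrm{id}_{\sO_X(r)}\otimes F^*(h_i^q\cdot\,)$, and pushing forward along the finite affine morphism $F$ and applying the projection formula $F_*(\sF\otimes F^*\sL)\cong F_*\sF\otimes\sL$ turns it into multiplication by $h_i^q$ on $F_*(\sO_X(r))(q-qd_i+m)\to F_*(\sO_X(r))(q+m)$, i.e. into the $i$-th component of $\phi_{m,q}(F_*(\sO_X(r)))$. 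As $H^0(X,F_*(-))=H^0(X,-)$, the two maps have equal cokernel. It thus remains to bound $\bigl|\,p^{d-1}\coker\,\phi_{m,q}(\sO_X)-\coker\,\phi_{m,q}(F_*(\sO_X(r)))\,\bigr|$.

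For that I would use Lemma~\ref{l4} together with the short exact sequences $0\to\sO_X\to\sO_X(r)\to Y_2\to0$ and $0\to\sO_X(-d)\to\sO_X\to Y_1\to0$ of Lemma~\ref{l6} (so $\dim Y_1=\dim Y_2=d-2$, and for $Y_1$ the shift is the \emph{fixed} number $m_0=d$) to express $F_*(\sO_X(r))$, modulo coherent sheaves supported in dimension $\le d-2$, as $\oplus^{p^{d-1}}\sO_X(-d)$; since $\coker\,\phi_{m,q}(\oplus^{p^{d-1}}\sO_X(-d))=p^{d-1}\coker\,\phi_{m-d,q}(\sO_X)$, one passes from $m-d$ back to $m$ via $Y_1$. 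Each comparison along a short exact sequence of sheaves $\sG$ of dimension $\le d-2$ changes $\coker\,\phi_{m,q}$ by a combination of terms $h^0(X,F^{n*}V\otimes\sG(\,\cdot\,))$ and $h^0(X,\sG(\,\cdot\,))$, which by Lemma~\ref{l2} are $\le\mu\,C_0(\sG)(m^{d-2}+1)$ and $\le D_0(\sG)(q+m)^{d-2}$; as every twist that appears is $\le m_R(q)\le(\bar m+n_0(\sum_id_i)+1)q$, this produces the factor $(\bar m+n_0(\sum_id_i)+1)^{d-2}$, whereas Lemma~\ref{l5} (for the sheaf $\sQ$ of Lemma~\ref{l4}) and Lemma~\ref{l6} (for $Y_1$ with $m_0=d$, and for $Y_2$ whose pushforward costs one further factor $p^{d-2}$) bound the relevant $2C_0+D_0$ by $p^{d-1}\bar P_1^d$, $d^{d-1}\bar P_2^d$ and $p^{d-1}\bar P_3^d$. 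Dividing the resulting $O(q^{d-2})$ errors by $(pq)^{d-1}$ makes them $O(p^{d-2}/q)=O(1/p^{\,n-d+2})$, with constants $\mu(\bar m+n_0(\sum_id_i)+1)^{d-2}\bar P_1^d$, $\mu(\bar m+n_0(\sum_id_i)+1)^{d-2}d^{d-1}\bar P_2^d$ and $\mu(\bar m+n_0(\sum_id_i)+1)^{d-2}\bar P_3^d$; summing these and adding $2C_R$ gives (1) with $C$ as in~(\ref{e30}). Part (2) is then immediate: by Lemma~\ref{l1} every $f_n$ is supported in the compact interval $[1,n_0\mu]$, so $\|f_n-f_{n+1}\|$ equals the supremum over $x$ in~(1), and $n=d-1$ yields $\|f_{d-1}-f_d\|\le C/p$. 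The main difficulty is the uniformity in $r$ in this last step: $r$ can be as large as $p-1$, and one must check that once the twist $\sO_X(r)$ is absorbed into $F_*(\sO_X(r))$ and then propagated through the Frobenius pullback $F^{n*}V$, all resulting powers of $p$ are exactly cancelled by the $p^{-(d-1)}$ normalizing $f_{n+1}$ and by the one-dimensional drop in the support of the sheaves involved, so that no positive power of $p$ survives in $C$.
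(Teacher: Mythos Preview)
Your proof is correct and follows essentially the same route as the paper: the three comparisons via $Y_1$, $\sQ$, and $Y_2$ are precisely the paper's Equations~(\ref{e8}), (\ref{e9}), (\ref{e10}), and your identity $\coker\,\phi_{M,pq}(\sO_X)=\coker\,\phi_{m,q}(F_*(\sO_X(r)))$ is exactly the projection-formula content that the paper uses (in unpacked form) when it writes $h^0(X,F^{n*}P\otimes(F_*\sO_X)(m))=h^0(X,(F^{(n+1)*}P)(mp))$ just before~(\ref{e9}). The only cosmetic difference is that the paper carries out the $Y_2$ step directly at level $qp$ (comparing $\phi_{mp,qp}$ with $\phi_{mp+n_2,qp}$), whereas in your packaging this becomes a comparison through $F_*Y_2$; to make your version precise you should invoke the projection formula once more to identify $h^0(F^{n*}V\otimes F_*Y_2(m))$ with $h^0(F^{(n+1)*}V\otimes Y_2(mp))$ before applying Lemma~\ref{l2} to $Y_2$, since Lemma~\ref{l6} bounds $C_0(Y_2),D_0(Y_2)$ and not $C_0(F_*Y_2),D_0(F_*Y_2)$---this is what your phrase ``pushforward costs one further factor $p^{d-2}$'' is gesturing at, and it recovers the factor $(mp+qp)^{d-2}=p^{d-2}(m+q)^{d-2}$ in the paper's~(\ref{e10}).
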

\begin{proof}Fix $x\in [1, \infty)$.
Therefore, for given $q = p^n$,
 there exists a unique integer $m\geq 0$, such that
$ (m+ q)/q \leq x < (m+q+1)/q $
and
$$\frac{(m+q)p+n_2}{qp}
 \leq x < \frac{(m+q)p+n_2+1}{qp},~~~~~~\mbox{for some}~~~ 0\leq n_2 < p.$$
Hence
$$f_{n}(x) = \frac{1}{q^{d-1}}\ell\left(\frac{R}{I^{[q]}}
\right)_{m+q}~~~\mbox{and}~~~~~
f_{n+1}(x) = \frac{1}{(qp)^{d-1}}\ell\left(\frac{R}{I^{[qp]}}\right)_{mp+qp+n_2}.$$

Now, by  Equation~(\ref{ee1}) in Remark~\ref{r1}, we have

\begin{equation}\label{e31}\left|f_n(x)-\frac{\coker~\phi_{m,q}(\sO_X)}{q^{d-1}}\right| < 
\frac{C_R}{q^{d-1}}~~~\mbox{and}~~~  
\left|f_{n+1}(x)-\frac{\coker~\phi_{mp+n_2,qp}(\sO_X)}{(qp)^{d-1}}\right| < 
\frac{C_R}{(qp)^{d-1}}.\end{equation}
Consider the short exact sequence of $\sO_X$-modules
$$0\longto \sO_X(-d)\longto \sO_X \longto Y_1\longto 0.$$
Then, for any locally free sheaf $P$ of $\sO_X$-modules and for $m\geq 0$,
 we have the following short exact 
sequence of $\sO_X$-modules
$$0\longto F^{n*}P\tensor\sO_X(-d+m)\longto F^{n*}P\tensor\sO_X(m) 
\longto F^{n*}P\tensor Y_1(m)\longto 0.$$
Since 
$$\coker~\phi_{m,q}(\sO_X) = h^0(X, \sO_X(m+q))-
\sum_ih^0(X, \sO_X(m+q-qd_i))+h^0(X, (F^{n*}V)(m))$$
we have (by taking $P =V$ and $= \sum \sO_X(1-d_i))$ 
respectively), 
$$|\coker~\phi_{m,q}(\sO_X)-\coker~\phi_{m-d,q}(\sO_X)|
\quad\quad\quad\quad\quad\quad\quad\quad $$
$$ \leq h^0(X, Y_1(m+q)) +  h^0(X,\sum_i\sO_X(q-qd_i)\tensor Y_1(m))+
h^0(X, F^{n*}V\tensor Y_1(m)). $$
If $d-2=0$ then 
$$|\coker~\phi_{m,q}(\sO_X)-\coker~\phi_{m-d,q}(\sO_X)| 
\leq (1+\mu-1+\mu)h^0(X, Y_1) = 2\mu C_0(Y_1).$$
If $d-2 >0$ then 
 by Lemma~\ref{l2}  
$$\begin{array}{lcl}
|\coker~\phi_{m,q}(\sO_X)-\coker~\phi_{m-d,q}(\sO_X)| & \leq & 
 D_0(Y_1)(m+q)^{d-2} + 2(\mu) C_0(Y_1)(m^{d-2}+1)\\
&  \leq &
(\mu)\left[2C_0(Y_1)+D_0(Y_1)\right](m+q)^{d-2}.\end{array}$$

Therefore
\begin{equation}\label{e8}
|p^{d-1}\coker~\phi_{m,q}(\sO_X)-p^{d-1}\coker~\phi_{m-d,q}(\sO_X)|
\leq (\mu)\left[2C_0(Y_1)+D_0(Y_1)\right](m+q)^{d-2}p^{d-1}.\end{equation}

Since, for a locally free sheaf $P$, we have 
$$h^0(X, F^{n*}P\tensor (F_*\sO_X)(m)) = 
h^0(X, F^{(n+1)*}P\tensor \sO_X(mp)),$$
 the short exact sequence in the statement of Lemma~\ref{l5} gives a 
canonical long exact sequence 
$$0\longto \oplus H^0(X,(F^{n*}P)(m-d))\longto H^0(X,(F^{(n+1)*}P)(mp))
\longto H^0(X, F^{n*}P\tensor\sQ(m))\longto \cdots, $$
which implies  
\begin{equation}\label{e9}
 |p^{d-1}\coker~\phi_{(m-d), q}(\sO_X) - 
\coker~\phi_{mp, qp}(\sO_X)|\leq 
(\mu)\left[2C_0(\sQ)+D_0(\sQ)\right]
(m+q)^{d-2}.
\end{equation}
The short exact sequence of $\sO_X$-modules
$$0\longto \sO_X\longto \sO_X(n_2) \longto Y_2\longto 0$$ gives
$$0\longto H^0(X,(F^{(n+1)*}P)(mp))
\longto H^0(X,(F^{(n+1)*}P)(mp+n_2))
\longto H^0(X,(F^{(n+1)*}P)\tensor Y_2(mp)),$$
which gives 
$$\begin{array}{l}
{|\coker~\phi_{mp,qp}(\sO_X)-\coker~\phi_{mp+n_2,qp}(\sO_X)|}\\
{}\\
\leq h^0(X, F^{(n+1)*}V\tensor Y_2(mp))
+ h^0(X,\sum_i\sO_X(qp-qpd_i)\tensor Y_2(mp))+h^0(X, Y_2(mp+qp))\\
{}\\
\leq 2(\mu) C_0(Y_2)((mp)^{d-2}+1)+(\mu) D_0(Y_2)(mp+qp)^{d-2}.\end{array}$$
Therefore
\begin{equation}\label{e10}|\coker~\phi_{mp,qp}
(\sO_X)-\coker~\phi_{mp+n_2,qp}(\sO_X)|\leq 
(\mu)\left[2C_0(Y_2)+D_0(Y_2)\right](mp+qp)^{d-2}.\end{equation}
Combining Equations~(\ref{e8}), (\ref{e9}) and (\ref{e10}),
  we get 
$$\begin{array}{l}
(A) := |p^{d-1}\coker~\phi_{m, q}(\sO_X) - \coker~\phi_{mp+n_2, qp}(\sO_X)|\\
{}\\
\leq (\mu)(m+q)^{d-2}\left[(2C_0(Y_1)+D_0(Y_1))p^{d-1}+(2C_0(\sQ)+D_0(\sQ))
+(2C_0(Y_2)+D_0(Y_2))p^{d-2}\right]\\
{}\\
\leq (\mu)(m+q)^{d-2}\left[p^{d-1}d^{d-1}{\bar P^d_2}
+p^{d-1}{\bar P^d_1} + n_2^{d-1}{\bar P^d_3}p^{d-2}\right],
\end{array}$$
where the last inequality follows from Lemma~\ref{l5},  Lemma~\ref{l6} with 
${\bar P^d_i} = {\bar P^d_i}({\tilde e_0}, \ldots, {\tilde e_{d-1}}, 
{\tilde m})$, for $i=1,2$ and $3$.

Now, as $n_2< p$, we have 
$$(A) \leq (\mu)(m+q)^{d-2}\left[p^{d-1}d^{d-1}{\bar P^d_2}
+p^{d-1}{\bar P^d_1}+ p^{d-1}p^{d-2}{\bar P^d_3}\right].$$

Now multiplying the above inequality by $1/(qp)^{d-1}$ we get
$$|\frac{\coker~\phi_{m, q}(\sO_X)}{q^{d-1}} - 
\frac{\coker~\phi_{mp+n_2, qp}(\sO_X)}{(qp)^{d-1}}| 
 \leq (\mu)\frac{(m+q)^{d-2}}{q^{d-1}}\left[d^{d-1}{\bar P^d_2}
+{\bar P^d_1}+ p^{d-2}{\bar P^d_3}\right].$$
Moreover, by Remark~\ref{r1},
$$m+q \geq {\bar m} + n_0(\sum_id_i)q + q \implies  
\coker~\phi_{m, q}(\sO_X) = 
\coker~\phi_{mp+n_2, qp}(\sO_X) = 0.$$
Also
$$m+q \leq   {\bar m} + n_0(\sum_id_i)q + q, \implies (m+q)^{d-2}\leq 
L_0q^{d-2},~~~\mbox{where}~~~~L_0 = ({\bar m} + n_0(\sum_id_i) + 1)^{d-2}.$$ 
Therefore, for every $m\geq 0$ and $n\geq 1$, where $q = p^n$, we 
have
$$\begin{array}{lcl}
\displaystyle{|\frac{\coker~\phi_{m, q}(\sO_X)}{q^{d-1}} - 
\frac{\coker~\phi_{mp+n_2, qp}(\sO_X)}{(qp)^{d-1}}|} 
& \leq & \displaystyle{(\mu)\frac{L_0q^{d-2}}{q^{d-1}}\left[d^{d-1}{\bar P^d_2}
+{\bar P^d_1}+ p^{d-2}{\bar P^d_3}\right]}\\
 & \leq & \displaystyle{(\mu)L_0\left[d^{d-1}{\bar P^d_2}
+{\bar P^d_1}+ {\bar P^d_3}\right]\frac{p^{d-2}q^{d-2}}{q^{d-1}}}.\end{array}$$
Now by  Equation~\ref{e31}, we have 
$$|f_n(x)-f_{n+1}(x)|\leq \frac{C_R}{q^{d-1}}+\frac{C_R}{(qp)^{d-1}}+
(\mu)L_0\left[d^{d-1}{\bar P^d_2}
+{\bar P^d_1}+ {\bar P^d_3}\right]\frac{p^{d-2}}{q} \leq C\frac{p^{d-2}}{q},$$ 
where 
 $C = 2C_R + (\mu)L_0\left(d^{d-1}{\bar P^d_2}+{\bar P^d_1}+
{\bar P^d_3}\right)$, which proves the proposition.\end{proof}

\begin{cor}\label{r4} There exists 
a constant  $C_1 = P^d_4({\tilde e_0}, {\tilde e_1}, \ldots, 
{\tilde e_{d}}, {\bar m})  + (n_0\mu-1)C$, where $C$ is 
as in Proposition~\ref{p1} and $P^d_4(X_0, \ldots, X_d, Y)$ is a universal 
polynomial function with rational coefficients such that, 
for $n\geq 1$ 
 $$\left|\frac{1}{(p^n)^d}\ell\left(\frac{R}{I^{[p^n]}}\right) - 
\frac{1}{(p^{n+1})^d}\ell\left(\frac{R}{I^{[p^{n+1}]}}\right)\right|\leq 
\frac{C_1}{p^{n-d+2}}.$$
\end{cor}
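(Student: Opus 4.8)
\textbf{Proof proposal for Corollary~\ref{r4}.}
The plan is to pass from the pointwise/$L^\infty$ bound on $\|f_n - f_{n+1}\|$ obtained in Proposition~\ref{p1} to the stated bound on the difference of normalized colengths by integrating (equivalently, summing) over the support. Recall from Notations~\ref{n1} that $f_n$ is a step function whose value on the interval where $(m+q)/q \le x < (m+q+1)/q$ (with $q = p^n$) is $q^{-(d-1)}\ell(R/I^{[q]})_{m+q}$, and each such interval has length $1/q$. Hence
$$\int_0^\infty f_n(x)\,dx = \sum_{m\ge 0}\frac{1}{q^{d-1}}\ell\left(\frac{R}{I^{[q]}}\right)_{m+q}\cdot\frac{1}{q} = \frac{1}{q^d}\sum_{j\ge q}\ell\left(\frac{R}{I^{[q]}}\right)_j.$$
The first step is therefore to compare $\int_0^\infty f_n$ with $\frac{1}{(p^n)^d}\ell(R/I^{[p^n]})$; the two differ only by the contribution of the graded pieces in degrees $j < q$, that is by $\frac{1}{q^d}\sum_{j=0}^{q-1}\ell(R/I^{[q]})_j$. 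Since $I^{[q]}$ agrees with $R$ in all sufficiently negative... rather, since in low degrees $\ell(R/I^{[q]})_j = \ell(R/\mathbf m^{?})_j$ is controlled — more precisely $\ell(R/I^{[q]})_j \le \ell(R_j) = \dim_k R_j$, which for $j\ge \bar m$ equals the Hilbert polynomial $P_{(R,\mathbf m)}(j)$ of degree $d-1$ — this tail sum is bounded by a universal polynomial in ${\tilde e_0},\dots,{\tilde e_d},{\bar m}$ divided by $q^d$, hence is $O(1/q)$; this is where the term $P^d_4({\tilde e_0},\dots,{\tilde e_d},{\bar m})$ enters. Actually one must be slightly more careful: one should show $|\int_0^\infty f_n - \frac{1}{(p^n)^d}\ell(R/I^{[p^n]})|$ is itself small, or alternatively work directly with the sums; I would phrase it as a clean lemma that $\frac{1}{(p^n)^d}\ell(R/I^{[p^n]})$ and $q^{-d}\sum_{j\ge 1}\ell(R/I^{[q]})_j$ differ by $O(1/q)$ with universal polynomial constant, using $\ell(R/I^{[q]})_j \le \dim_k R_j$ and the Hilbert polynomial estimate.

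The second step is the main computation: bound
$$\left|\int_0^\infty f_n(x)\,dx - \int_0^\infty f_{n+1}(x)\,dx\right| \le \int_0^\infty |f_n(x) - f_{n+1}(x)|\,dx.$$
By Lemma~\ref{l1}, both $f_n$ and $f_{n+1}$ are supported in $[1, n_0\mu]$, so the integral on the right is over an interval of length at most $n_0\mu - 1$. Applying the pointwise bound $|f_n(x) - f_{n+1}(x)| \le C/p^{n-d+2}$ from Proposition~\ref{p1}(1) on this interval gives
$$\int_0^\infty |f_n(x) - f_{n+1}(x)|\,dx \le (n_0\mu - 1)\,\frac{C}{p^{n-d+2}}.$$
This is where the summand $(n_0\mu - 1)C$ in the definition of $C_1$ comes from.

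Finally, I would combine the two steps by the triangle inequality: writing $L_n := \frac{1}{(p^n)^d}\ell(R/I^{[p^n]})$ and $J_n := \int_0^\infty f_n$, one has
$$|L_n - L_{n+1}| \le |L_n - J_n| + |J_n - J_{n+1}| + |J_{n+1} - L_{n+1}|,$$
and each of the three terms is bounded by (a universal polynomial)$/p^{\,n-d+2}$ — the outer two by the Step 1 estimate (note the power of $p$ there is $1/p^n$, which is no larger than $1/p^{n-d+2}$ once one absorbs constants, or one simply checks the degree bookkeeping matches the Hilbert-polynomial degree $d$ against the normalization $p^{nd}$), the middle one by Step 2. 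Absorbing all the universal-polynomial constants into a single $P^d_4({\tilde e_0},\dots,{\tilde e_d},{\bar m})$ and adding $(n_0\mu-1)C$ yields the claimed $C_1$. The only mildly delicate point — the step I expect to require the most care — is Step 1: making the comparison between $\ell(R/I^{[q]})$ and the partial sum $\sum_{j\ge q}\ell(R/I^{[q]})_j$ genuinely uniform in $q$ and $p$, controlling the low-degree terms $\sum_{j<q}\ell(R/I^{[q]})_j$ via the Hilbert polynomial of $R$ (valid only for $j\ge \bar m$, so the finitely many exceptional degrees below $\bar m$ must be handled by the crude bound $\dim_k R_j \le \dim_k R_{\bar m}$) so that the resulting constant is visibly a universal polynomial in ${\tilde e_0},\dots,{\tilde e_d},{\bar m}$; everything else is a routine application of Proposition~\ref{p1} and Lemma~\ref{l1}.
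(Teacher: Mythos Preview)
Your Step~2 and the overall strategy are correct and match the paper, but Step~1 contains a genuine error that breaks the triangle-inequality argument as written.

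You claim that $|L_n - J_n| = \frac{1}{q^d}\sum_{j=0}^{q-1}\ell(R/I^{[q]})_j$ is $O(1/q)$. It is not. Since the generators of $I$ have positive degree, $(I^{[q]})_j = 0$ for $j<q$, so $\ell(R/I^{[q]})_j = \ell(R_j)$ for all $j<q$, and therefore
\[
L_n - J_n \;=\; \frac{1}{q^d}\sum_{j=0}^{q-1}\ell(R_j) \;=\; \frac{1}{q^d}\,\ell\!\left(\frac{R}{\mathbf m^{\,q}}\right).
\]
The Hilbert function $\ell(R_j)$ is a polynomial of degree $d-1$ in $j$ for $j\ge \bar m$, so the sum is of order $q^d$, and $L_n-J_n$ converges to the \emph{nonzero} constant $\tilde e_0/d!$. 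Thus the two outer terms in your triangle inequality $|L_n - J_n| + |J_n-J_{n+1}| + |J_{n+1}-L_{n+1}|$ do not go to zero at all, and the argument fails.

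The fix is exactly what the paper does: use the identity $L_n = J_n + \tfrac{1}{q^d}\ell(R/\mathbf m^{\,q})$ to write a two-term (not three-term) estimate
\[
|L_n - L_{n+1}| \;\le\; \left|\frac{\ell(R/\mathbf m^{\,p^n})}{p^{nd}} - \frac{\ell(R/\mathbf m^{\,p^{n+1}})}{p^{(n+1)d}}\right| \;+\; |J_n - J_{n+1}|,
\]
and then bound the first term directly via the Hilbert--Samuel polynomial $P_{(R,\mathbf m)}$: for $p^n>\bar m$ one has $\ell(R/\mathbf m^{\,p^n}) = P_{(R,\mathbf m)}(p^n)$, and the difference of the two normalized values is $O(1/p^n)$ with constant a universal polynomial in $\tilde e_0,\dots,\tilde e_d$ (the case $p^n\le\bar m$ being handled crudely). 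This is the source of $P^d_4$. Your Step~2 then supplies the $(n_0\mu-1)C/p^{\,n-d+2}$ term exactly as you wrote.
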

\begin{proof}Note
$$\sum_{m=0}^{\infty}\frac{1}{p^{nd}}\ell\left(\frac{R}{I^{[p^n]}}\right)_{m+q} = \int_{1}^{\infty}
f_n(x)dx = \int_{1}^{n_0\mu} f_n(x)dx,$$
where the last equality follows from Lemma~\ref{l1}.
Hence 
$$\left|\frac{1}{(p^n)^d}
\ell\left(\frac{R}{I^{[p^n]}}\right) - 
\frac{1}{(p^{n+1})^d}\ell\left(\frac{R}{I^{[p^{n+1}]}}\right)\right|$$
$$ \leq 
\left|\frac{1}{p^{nd}}\ell\left(\frac{R}{{\bf m}^{p^n}}\right) - 
 \frac{1}{p^{(n+1)d}}\ell\left(\frac{R}{{\bf m}^{p^{n+1}}}\right)\right|+
\left|\int_1^{n_0\mu}f_n(x)dx -
\int_1^{n_0\mu}f_{n+1}(x)dx\right|.$$
If $p^n\leq {\bar m}$ then 
$$\left|\frac{1}{p^{nd}}\ell\left(\frac{R}{{\bf m}^{p^n}}\right) - 
 \frac{1}{p^{(n+1)d}}\ell\left(\frac{R}{{\bf m}^{p^{n+1}}}\right)\right|\leq 
\left|\frac{P_{(R,{\bf m})}({\bar m})}{p^{nd}}-\frac{P_{(R, 
{\bf m})}({\bar m}^2)}{p^{(n+1)d}}\right| \leq 
\frac{P_{(R, {\bf m})}({\bar m}^2)}{p^n},$$
if  $p^n > {\bar m}$ then there exists a universal polynomial function 
 $P^d_6(X_0, \ldots, X_d)$
with rational coefficients
such that 
$$\mbox{L.H.S.} \leq \left|\frac{P_{(R,{\bf m})}(p^n)}{p^{nd}}-\frac{P_{(R, 
{\bf m})}(p^{n+1})}{p^{(n+1)d}}\right| \leq 
\frac{P^d_6({\tilde e_0}, {\tilde e_1}, \ldots, 
{\tilde e_{d}})}{p^n}.$$
Therefore, combining this with Proposition~\ref{p1}, part~(1), we
get a  universal polynomial function 
 $P^d_4(X_0, \ldots, X_d,Y)$
with rational coefficients
such that 
$$\left|\frac{1}{(p^n)^d}\ell\left(\frac{R}{I^{[p^n]}}\right) - 
\frac{1}{(p^{n+1})^d}\ell\left(\frac{R}{I^{[p^{n+1}]}}\right)\right| 
\leq \frac{P^d_4({\tilde e_0}, {\tilde e_1}, \ldots, 
{\tilde e_d}) }{p^n} + \frac{(n_0\mu-1)C}{p^{n-d+2}}.$$
Since $d\geq 2$, the corollary follows.
 \end{proof}

\section{Hilbert-Kunz density function and reduction mod $p$}
\begin{rmk}\label{r3}
Let $R$ be a standard graded  integral domain of dimension $d\geq 2$,
 with $R_0 = k$, where $k$ is an algebraically closed field.
Let $N = \ell(R_1)-1$, then we have  a surjective
 graded map $k[X_0, \ldots, X_N] \longto R$ of degree $0$, 
 given by $X_i$ mapping to generators of $R_1$.
This gives a closed immersion 
$X= {\rm Proj}~R \longto \P^N_k$ such that 
$\sO_X(1) = \sO_{\P^N_k}(1)\mid_X$.
Therefore if 
 
$$P_R(m)) = {\tilde e_0}{{m+d-1}\choose{d}}
-{\tilde e_1}{{m+d-2}\choose{d-1}}+\cdots +(-1)^{d}{\tilde e_{d}}$$
 is the Hilbert-Samuel polynomial of $R$ then the Hilbert polynomial for 
the pair $(X, \sO_X(1))$ is 
$$\chi(X, \sO_X(m)) = {\tilde e_0}{{m+d-1}\choose{d-1}}
-{\tilde e_1}{{m+d-2}\choose{d-2}}+\cdots +(-1)^{d-1}{\tilde e_{d-1}}.$$
Since $R$ is a domain, the canonical graded map 
$R = \oplus_m R_m \longto \oplus_mH^0(X,\sO_X(m))$ is  injective.

Let $\sI_X$ be the ideal sheaf of $X$ in $\P^N_k$; then we have 
the canonical short exact sequence of $\sO_{\P^N_k}$-modules 
$$0\longto \sI_X\longto \sO_{\P^N_k}\longto \sO_X\longto 0$$
and the image of 
the induced map $f_m:H^0({\P^N_k}, \sO_{\P^N_k}(m))\longto H^0(X, \sO_X(m))$
is $R_m$.
Now, by Exp~XIII, (6.2) (in [SGA6]), there exists a universal polynomial 
$P^d_5(t_0, \ldots, t_{d-1})$ with rational coefficients such that 
the sheaf $\sI_X$ is 
${\bar m} = P^d_5({\tilde e_0}, \ldots, 
{\tilde e_{d-1}})$-regular. Therefore the map $f_m$ is surjective
 for $m\geq {\bar m}$.

In particular, we have 
\begin{enumerate}
\item $R_m = H^0(X,\sO_X(m))$, for all $m\geq {\bar m}$ 
and 
\item the sheaf $\sO_X$ is ${\bar m}$-regular with respect
 to  $\sO_X(1).$
\end{enumerate}
\end{rmk}

Next we recall a notion of {\em spread}.
\begin{defn}\label{d3} Consider the pair $(R, I)$, where
$R$ is
 a finitely generated $\Z_{\geq 0}$-graded $d$-dimensional domain such that 
$R_0$ is an
algebraically closed
 field $k$ of characteristic $0$, and $I\subset R$ is
a
homogeneous ideal of finite colength. For  such a pair, there exists a
finitely
generated $\Z$-algebra $A\subseteq k$, a finitely generated $\N$-graded
algebra $R_A$
over $A$ and a homogeneous ideal $I_A\subset R_A$ such that
$R_A\tensor_Ak = R$ and 
$I = {\rm{Im}}(I_A\tensor_A {k})$. We call $(A, R_A, I_A)$ a {\it spread} of
the pair $(R, I)$.

Moreover, if, for the pair $(R, I)$,  we have a
{spread} $(A,R_A,I_A)$
as above and
 $A\subset A' \subset
k$, for some finitely generated $\Z$-algebra $A'$  then $(A', R_{A'},
I_{A'})$
satisfy the same properties as $(A, R_A, I_A)$. Hence we may always assume
that the spread $(A, R_A, I_A)$  as above is
chosen
such that $A$ contains a given finitely generated $\Z$-algebra $A_0 \subseteq
k$.\end{defn}

\begin{notations}\label{n6} Given a spread $(A, R_A, I_A)$ as above, for a 
closed point $s\in\Spec(A)$, we define
 $R_s = R_A\tensor_A {\bar k(s)}$ and 
the ideal $I_s = {\rm{Im}}(I_A\tensor_A {\bar k(s)})
 \subset R_s $. 
Similarly for $X_A:={\rm Proj}~R_A$, we define
$X_s:= X\tensor {\bar k(s)} = {\rm Proj}~R_s$ and, for a coherent sheaf  
$V_A$ on $X_A$, we define $V_s = V_A\tensor {\bar k(s)}$.

\end{notations}

\begin{rmk}\label{spread} Note that for a spread $(A, R_A, I_A)$ of $(R, I)$ 
as above, the induced map 

${\tilde \pi}:X_A:={\rm Proj}~R_A \longto
 \Spec(A)$ is a proper map, hence by generic flatness 
 there is an open set 
(infact non empty as $A$ is an integral domain) $U\subset \Spec(A)$
 such that 
${\tilde \pi}\mid_{{\tilde \pi}^{-1}(U)}:{{\tilde \pi}^{-1}(U)} \longto U$
 is a proper flat
 map. Therefore (see [EGA IV]~12.2.1) the set 
$$\{s\in \Spec(A)\mid X\tensor_{\Spec(A)} \Spec(k(s))
\quad\mbox{is geometrically integral}\}$$
is a nonempty open set of $\Spec(A)$.
Hence replacing $A$ by a finitely generated $\Z$-algebra $A'$ such 
that $A \subset A'\subset k$ (if necessary) we can 
assume that ${\tilde \pi}$ is a flat map such that for every $s\in \Spec(A)$,
the fiber over $s$ is geomerically integral.

Therefore for any closed point $s\in \Spec(A)$ ({\it
i.e.}, a maximal ideal of $A$) the ring $R_s$ is a 
standard graded  $d$-dimensional
ring such that  the ideal $I_s 
 \subset R_s $ is a homogeneous ideal of finite colength. Moreover
$X_s$ is an integral scheme  over
${\bar {k(s)}}$.\end{rmk} 
\vspace{10pt} 

\noindent{PROOF of Theorem~\ref{t2}}\quad For given $(R, I)$, and a given  
 spread $(A, R_A, I_A)$, we can choose a spread $(A', R_{A'}, I_{A'})$,
 where $A\subset A'$, such that 
the induced projective morphism of Noetherian schemes  
${\tilde \pi}:X_{A'}\longto A'$ is flat and, for every $s\in \Spec(A')$, $X_s$
 is an integral scheme over ${\bar k(s)}$ of dimension
 $= d-1$.
Let $R_{A'}=\oplus_{i\geq 0}(R_{A'})_i$ and 
let $(R_{A'})_1$ be generated by $N$ elements as an $A'$-module. Then
 the canonical graded surjective map
 $$A'[X_0, \ldots, X_N] \longto R_{A'},$$  
gives a closed immersion 
$X_{A'}= {\rm Proj}~R_{A'} \longto \P^N_{A'}$ such that 
$\sO_{X_{A'}}(1) = \sO_{\P^N_{A'}}(1)\mid_{X_{A'}}$.
Let $X_s = X_{A'}\tensor {\bar k(s)}$.  Then $X_s = {\rm Proj}~R_s$ and 
$\sO_{X_s}(1)$ is the canonical line bundle induced by 
$ \sO_{X_{A'}}(1)$.
 Let $s_0 = \Spec{Q(A)} = \Spec{Q(A')}$ be the generic point of $\Spec(A')$. 
We now have the following, 
\begin{enumerate}
\item The Hilbert polynomial for 
the pair $(X_s, \sO_{X_s}(1))$ is 
$$\chi(X_s, \sO_{X_s}(m)) = {\tilde e_0}{{m+d-1}\choose{d-1}}
-{\tilde e_1}{{m+d-2}\choose{d-2}}+\cdots +(-1)^{d-1}{\tilde e_{d-1}},$$
where the coefficients ${\tilde e_i}$ are as above for $(X,\sO_X(1))$ (from 
$\Char~0$).
 
In particular, $\dim~X_s =  d-1$ and
\item by Remark~\ref{r3}, there exists 
${\bar m} = P^d_5({\tilde e_0}, \ldots,
 {\tilde e_{d-1}})$ 
 such that $(R_s)_m = H^0(X_s, \sO_{X_s}(m))$ for all
 $m\geq {\bar m}$ and $(X_s, \sO_{X_s}(1))$ is ${\bar m}$-regular.
\item Moreover, by the semicontinuity theorem (Chapter~III, Theorem~12.8 in 
[H]), by  shrinking $\Spec(A')$ further, we have 
 $h^i(X_s, \sO_{X_s}({\bar m}))$ and $h^0(X_s,\sO_{X_s})$ is 
independent of $s$, 
for all  $i\geq 0$.
\item Again by shrinking $\Spec(A')$ (if necessary), can choose $n_0\in\N$ such
that $R_{A'}^{n_0} \subseteq I_{A'}$. This implies $R_s^{n_0}\subseteq I_s$, 
for all $s\in \Spec(A')$.
\end{enumerate}

Let  $s\in \Spec(A')$ and let $p = \Char~k(s)$.
We sketch the proof of the existence of the map $f(R_s, I_s):[1, \infty)\to \R$ 
and its relation 
to $e_{HK}(R, I)$ (note that we have proved this in a more general setting in [T4]).
By Proposition~\ref{p1}, for any given $s$, 
the sequence $\{f^s_n\}_n$ of functions is 
uniformly convergent. Let $f(R_s, I_s)(x) = \lim_{n\to \infty}f_n(R_s, I_s)(x)$. 
This implies
 that $\lim_{n\to \infty}\int_{1}^{\infty}f_n(R_s, I_s)(x) = 
\int_{1}^{\infty}f(R_s, I_s)(x)$
 as, by Lemma~\ref{l1}, there is a compact set containing 
$\cup_n {\rm supp}~f_n(R_s, I_s)$.
On the other hand 
$$\begin{array}{lcl}
e_{HK}(R_s, I_s) & = & \displaystyle{\lim_{n\to \infty} \frac{1}{p^{nd}}
\ell\left(\frac{R_s}{I_s^{[p^n]}}\right)
= \lim_{n\to \infty} \frac{1}{p^{nd}}\ell\left(\frac{R_s}{{\bf m}_s^{p^n}}
\right) + \lim_{n\to \infty}\frac{1}{p^{nd}}\sum_{m\geq 0}\ell
\left(\frac{R}{I^{[p^n]}}\right)_{m+p^n}}\\
 & =  &\displaystyle{e(R_s, {\bf m}_s) + \lim_{n\to \infty}
\int_{1}^{\infty}f_n(R_s, I_s)(x)
=e(R_s, {\bf m}_s) + \int_{1}^{\infty}f(R_s,I_s)(x)},\end{array}$$
where ${\bf m}_s$ is the graded maximal ideal of $R_s$. 

Now, by 
Proposition~\ref{p1}, there exists a constant  
$$C= 2C_{R_s}+\mu\left({\bar m}+n_0(\sum_{i=1}^{\mu}d_i) 
+1\right)^{d-2}({\bar P^d_1}+d^{d-1}{\bar P^d_2}+
{\bar P^d_3}),$$
which is independent of the choice of  $s$ in $\Spec(A')$ (as $C_{R_s} = \mu h^0(X_s, \sO_{X_s}(1))$)
such that 
$$||f_n(R_s, I_s)-f_{n+1}(R_s, I_s)||\leq C/p^{n-d+2},\quad\mbox{for all}\quad n.$$

In particular,  for given $m\geq d-1$, 
$$||f_{m}(R_s,I_s)-f(R_s,I_s)||\leq \left[C/p+C/p^2+C/p^3+\cdots 
\right]\frac{1}{p^{m-(d-1)}}
\leq \frac{2C}{p^{m-d+2}}.$$
As $s\to s_0$ we have $\Char~k(s) \to \infty$, which implies
$\lim_{s\to s_0}||f_{m}(R_s, I_s)-f(R_s,I_s)|| = 0$.
This proves the first assertion of the theorem.

Since each $f_{m}(R_s, I_s)$ and $f(R_s, I_s)$ has support in the compact interval 
$[1, n_0\mu]$, the above inequality implies that, for any fixed $m\geq d-1$, 
$$\lim_{s\to s_0}\left|\int_{1}^{\infty}f_{m}(R_s, I_s)(x)dx-
\int_{1}^{\infty}f(R_s, I_s)(x)dx\right| \leq \hspace{10cm} $$
$$   
\lim_{s\to s_0}\int_{1}^{\infty}|f_{m}(R_s, I_s)(x)-f(R_s, I_s)(x)|dx \leq 
\lim_{s\to s_0} \left(\frac{2C}{p^{m-d+2}}\right)(n_0\mu-1) = 0.$$
Moreover it is easy to see that 
$$\lim_{s\to s_0}\left[\frac{1}{p^{md}}
\ell\left(\frac{R_s}{{\bf m}_s^{p^{m}}}\right)
-e(R_s, {\bf m}_s)\right] = 0.$$
Therefore 
$$\lim_{s\to s_0}\left[\frac{1}{p^{md}}
\ell\left(\frac{R_s}{I_s^{[p^{m}]}}\right)-
e_{HK}(R_s, I_s)\right] = $$
$$ \lim_{s\to s_0}\left[\left\{\frac{1}{p^{md}}
\ell\left(\frac{R_s}{{\bf m}_s^{p^{m}}}\right) + 
\int_{1}^{\infty}f_{m}(R_s, I_s)(x)dx\right\} - \left\{e(R_s, {\bf m}_s) +
\int_{1}^{\infty}f(R_s, I_s)(x)dx\right\} \right]=  0.$$ $\Box$

Now the proof of  Corollary~\ref{c1} is obvious.

\section{Some properties and examples}
Throughout this section  $R$ is a standard graded  integral domain of 
dimension $d\geq 2$,
 with $R_0 = k$, where $k$ is an algebraically closed field of characteristic $0$, and 
$I\subset R$ is a homogeneous ideal of finite colength.
Our choice of  spread satisfies conditions as given in Remark~\ref{spread}.
\begin{defn}\label{d5} We denote 
$f^{\infty}(R, I)=\lim_{s\to s_0}f(R_s, I_s)$ if it exists, where
for $(R, I)$ the pair  $(R_s, I_s)$ is given as in Definition~\ref{d3} 
and Notations~\ref{n6}.
\end{defn}

\begin{defn}\label{d2} For a choice of 
spread $(A, R_A, I_A)$ of $(R, I)$ , as in Remark~\ref{spread}, 
 and a closed point $s\in \Spec(A)$, we define 
$$HSd(R_s)(x) = F_{R_s}(x) = \lim_{n\to \infty}F_n(R_s)(x),~~~~\mbox{where}~~~
F_n(R_s)(x) =
\frac{1}{q^{d-1}}\ell({R_s}_{\lfloor xq \rfloor})~~\mbox{and}~~q=p^n.$$
One can check that
 $$F_{R_s}:\R\rightarrow \R~~\mbox{is given by}~~~
F_{R_s}(x) = 0,~~~\mbox{for}~~~ x < 0,~~~~\mbox{and}~~~
F_{R_s}(x) = e_0(R, {\bf m})x^{d-1}/(d-1)!,~~~~\mbox{ for}~~~ x\geq 0,$$
   where
$e_0(R,{\bf m})$ is the Hilbert-Samuel multiplicity of $R$ with respect to
${\bf m}$.
Hence we denote $F_{R_s}(x) = F_R(x)$. 
Moreover, for any $n\geq 1$ we have $\lim_{s\to s_0}F_n(R_s)(x) = F_{R_s}(x) = F_R(x)$.
 \end{defn}

\begin{propose}\label{p2}Let $R$ and $S$ be standard graded 
domains, where  $R_0 = S_0 = k$, where $k$ is an algebraically closed field of 
characteristic $0$   
with $I\subset R$ and $J\subset S$ homogeneous ideals of finite
 coelength respectively.
If $f^{\infty}(R, I)(x)$ and $f^{\infty}(S, J)(x)$ exist then 
$f^{\infty}(R\# S, I\# J)(x)$ exists and 
$$ f^{\infty}(R\# S, I\# J)(x) = F_S(x)f^{\infty}(R, I)(x) +
 F_R(x)f^{\infty}(S, J)(x) - f^{\infty}(R, I)(x)f^{\infty}(S, J)(x).$$
In particular $f^{\infty}(-, -)$ satisfies a multiplicative formula on Segre products.
\end{propose}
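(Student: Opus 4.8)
The plan is to prove the identity first in positive characteristic, over the closed points of a spread, and then let the characteristic tend to infinity. I would start by choosing, via Definition~\ref{d3}, a single finitely generated $\Z$-algebra $A\subset k$ carrying simultaneously a spread $(A,R_A,I_A)$ of $(R,I)$ and a spread $(A,S_A,J_A)$ of $(S,J)$. Then $(A,\,R_A\#S_A,\,I_A\#J_A)$ is a spread of $(R\#S,\,I\#J)$ with $(R_A\#S_A)_s=R_s\#S_s$ and $(I_A\#J_A)_s=I_s\#J_s$ for every closed point $s\in\Spec(A)$. Shrinking $\Spec(A)$ to a dense open subset along the lines of Remark~\ref{spread} (a finite intersection of dense opens of the irreducible scheme $\Spec(A)$ is again dense open), I may assume that for every closed point $s$ the rings $R_s$, $S_s$, $R_s\#S_s$ are geometrically integral standard graded domains, of dimensions $\dim R$, $\dim S$ and $\dim R+\dim S-1$ respectively, and that $I_s$, $J_s$, $I_s\#J_s$ all have finite colength; so the Hilbert--Kunz density function is defined for each of the three pairs.

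For a fixed closed point $s\in\Spec(A)$, with $p=\Char k(s)>0$, the characteristic-$p$ multiplicativity of the Hilbert--Kunz density function under Segre products (see [T4]) gives, for every $x\in[1,\infty)$,
\[
f(R_s\#S_s,\,I_s\#J_s)(x)=F_{S_s}(x)\,f(R_s,I_s)(x)+F_{R_s}(x)\,f(S_s,J_s)(x)-f(R_s,I_s)(x)\,f(S_s,J_s)(x).
\]
The mechanism behind this is the K\"unneth decomposition of the cohomology on ${\rm Proj}~R_s\times{\rm Proj}~S_s$ of the $n$-th Frobenius pull-back of the syzygy bundle of $I_s\#J_s$, together with the elementary identity $\dim_k(R_s\#S_s)_m=\dim_k(R_s)_m\cdot\dim_k(S_s)_m$; I would invoke this step from [T4] rather than redo it.

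I would then let $s\to s_0$. By hypothesis, for each fixed $x$ the limits $\lim_{s\to s_0}f(R_s,I_s)(x)=f^\infty(R,I)(x)$ and $\lim_{s\to s_0}f(S_s,J_s)(x)=f^\infty(S,J)(x)$ exist, and by Definition~\ref{d2} the Hilbert--Samuel density functions satisfy $F_{R_s}=F_R$ and $F_{S_s}=F_S$ independently of $s$. Hence the right-hand side of the displayed identity converges as $s\to s_0$, so the left-hand side does too; by Definition~\ref{d5} its limit is $f^\infty(R\#S,I\#J)(x)$, and it equals
\[
F_S(x)\,f^\infty(R,I)(x)+F_R(x)\,f^\infty(S,J)(x)-f^\infty(R,I)(x)\,f^\infty(S,J)(x),
\]
which is the claimed formula. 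Since this can be rewritten as $F_R(x)F_S(x)-f^\infty(R\#S,I\#J)(x)=\bigl(F_R(x)-f^\infty(R,I)(x)\bigr)\bigl(F_S(x)-f^\infty(S,J)(x)\bigr)$ and $F_RF_S=F_{R\#S}$, the ``defect'' $F-f^\infty$ is multiplicative on Segre products, which is the final assertion.

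The only substantial ingredient is the positive-characteristic Segre formula, which I treat as a black box from [T4]; the rest is bookkeeping. The two points worth checking are that one $A$, and one dense open of $\Spec(A)$, serve all three pairs at once (immediate from irreducibility of $\Spec(A)$), and that the hypotheses deliver exactly the pointwise convergence of $f(R_s,I_s)$ and $f(S_s,J_s)$ used in the limit --- no uniform rate (like the bounds of Theorem~\ref{t2}) is required, since we only need the limit to exist and to take the stated value.
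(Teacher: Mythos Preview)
Your argument is correct and, in fact, more direct than the paper's. The paper establishes the identity at the level of the truncated functions $f_n$, namely
\[
f_n(R_s\#S_s,I_s\#J_s)(x)=F_n(R_s)(x)\,g_n^s(x)+F_n(S_s)(x)\,f_n^s(x)-f_n^s(x)\,g_n^s(x),
\]
and then invokes Theorem~\ref{t2}(1) twice: first to deduce $\lim_{s\to s_0}f_n^s=f^\infty$ and $\lim_{s\to s_0}g_n^s=g^\infty$ for a fixed $n\geq d_1+d_2-2$, and then again, applied to the Segre product, to pass from $\lim_{s\to s_0}f_n(R_s\#S_s,I_s\#J_s)$ back to $f^\infty(R\#S,I\#J)$. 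You bypass this detour entirely by quoting the Segre formula at the level of the limit density functions $f(R_s,I_s)$ themselves (Proposition~2.18 of [T4], which the paper itself uses in the very next proposition), and then taking $s\to s_0$ directly; since $F_{R_s}=F_R$ is constant in $s$, only the pointwise convergence hypothesized in Definition~\ref{d5} is needed, and Theorem~\ref{t2} plays no role. What the paper's route buys is that it rests on the more elementary finite-level identity (a pure dimension count) rather than on the already-assembled Segre formula for $f$; what your route buys is brevity and a cleaner logical structure. One small remark: the paper's phrase ``$f^\infty(-,-)$ satisfies a multiplicative formula on Segre products'' is just a gloss on the displayed identity, not a separate assertion about the defect $F-f^\infty$, so your closing paragraph, while a correct observation, is not needed for the statement as written.
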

\begin{proof} Let us denote  $f^{\infty} = f^{\infty}(R,I)$ and 
$g^{\infty} = g^{\infty}(S,J)$. For $q=p^n$, where $p = \mbox{char~k(s)}$, 
 we denote $f_n^s = f_n(R_s, I_s)$ and $g_n^s 
= f_n(S_s, J_s)$, where $s\in \Spec(A)$ denotes a closed point and $(A, R_A,I_A)$ and 
$(A, S_A, J_A)$ are spreads.

For any $n\geq 1$, we have
$$f_n(R_s\#S_s, I_s\#J_s)(x) = F_n(R_s)(x)g_n^s(x) + F_n(S_s)(x)f_n^s(x) -
f_n^s(x)g_n^s(x).$$

For a spread $(A, R_A, I_A)$,  let $n_0$ and $\mu $ be positive integers such that 
 ${(R_A)_1}^{n_0}\subseteq I_A$, ${(S_A)_1}^{n_0}\subseteq J_A$ and 
$\mu(I_A)$, $\mu(J_A)\leq  \mu $.  Then, by Lemma~\ref{l1},  
$$\bigcup_{n\geq 0, s\in\Spec(A)}\mbox{Support}~(f_n^s) \bigcup
\bigcup_{n\geq 0, s\in\Spec(A)}\mbox{Support}~(g_n^s) \subseteq 
[0, n_0\mu].$$
Moreover, there is a constant $C_1$ such that, for any $n\geq 1$ and every closed point $s\in \Spec(A)$, we have  
$$f_n^s(x)\leq F_n(R_s)(x)\leq C_1~~~\mbox{and}~~~g^s_n(x)\leq F_n(S_s)(x)
\leq C_1,$$
for all $x\in [0, n_0\mu]$. 

Since $f^{\infty}$ and $g^{\infty}$ exists,
 by Theorem~\ref{t2}~(1), for given $n\geq d_1+d_1-2$, 
$\lim_{s\to s_0} f_n^s = f^{\infty}$
 and $\lim_{s\to s_0} g_n^s = g^{\infty}$.
This implies, for given 
$n\geq d_1+d_2-2$, 
we have the limit function computation
$$\lim_{s\to s_0}F_n(R_s)(x)g_n^s(x) +  F_n(S_s)(x)f_n^s(x)
- f_n^s(x)g_n^s(x) = F_R(x)g^{\infty}(x) + F_S(x)f^{\infty}(x) - 
f^{\infty}(x)g^{\infty}(x).$$ 
Hence, for any $n\geq d_1+d_2-2$,  
$$\lim_{s\to s_0}f_n(R_s\#S_s, I_s\#J_s)(x) = 
F_R(x)g^{\infty}(x) + F_S(x)f^{\infty}(x) -f^{\infty}(x)g^{\infty}(x).$$
Now, by Theorem~\ref{t2}~(1), the proposition follows.\end{proof}
 
\begin{propose}\label{p3}Let the pairs $(R,I)$ and $(S, J)$ be as 
in  Proposition~\ref{p2}. Let  $(A, R_A,I_A)$, 
$(A, S_A, J_A)$ be spreads and  $s\in \Spec(A)$  a closed point. Suppose 
 $f(R_s, I_s)\geq f^{\infty}(R, I)$ and $f(S_s, J_s)\geq 
f^{\infty}(S, J)$. Then 
\begin{enumerate}
\item  $f(R_s\#S_s, I_s\#J_s)\geq f^{\infty}(R\#S, I\#J)$. Moreover, 

\item  if $(A, R_A,I_A)$, 
$(A, S_A, J_A)$ be spreads and $s\in \Spec(A)$ is a closed point such that 
 $I_s\cap (R_s)_1\neq 0 $ and $J_s\cap (S_s)_1\neq 0$ 
then 
 $$f(R_s, I_s) = f^{\infty}(R, I)~~\mbox{and}~~ f(S_s, J_s)= f^{\infty}(S, J)~~
\iff f(R_s\#S_s, I_s\#J_s) = f^{\infty}(R\#S, I\#J).$$ 
\end{enumerate}
\end{propose}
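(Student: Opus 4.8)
\emph{Proof proposal.}
The plan is to reduce the whole statement to one elementary pointwise identity for the density functions. First I would record, exactly as in the proof of Proposition~\ref{p2}, the finite-level product formula
\[
f_n(R_s\#S_s,I_s\#J_s)(x)=F_n(R_s)(x)f_n(S_s,J_s)(x)+F_n(S_s)(x)f_n(R_s,I_s)(x)-f_n(R_s,I_s)(x)f_n(S_s,J_s)(x),
\]
and let $n\to\infty$ with $s$ fixed. Each factor converges pointwise ($F_n(R_s)(x)\to F_{R_s}(x)=F_R(x)$, the Hilbert--Samuel density function; $f_n(R_s,I_s)(x)\to f(R_s,I_s)(x)$; and similarly for $S$), and all terms vanish outside the fixed compact interval $[1,n_0\mu]$ of Lemma~\ref{l1}, so
\[
f(R_s\#S_s,I_s\#J_s)=F_R\,f(S_s,J_s)+F_S\,f(R_s,I_s)-f(R_s,I_s)\,f(S_s,J_s),
\]
while Proposition~\ref{p2} is the same formula with $f(R_s,I_s),f(S_s,J_s)$ replaced by $f^{\infty}(R,I),f^{\infty}(S,J)$ and the left side by $f^{\infty}(R\#S,I\#J)$ (which exists, again by Proposition~\ref{p2}). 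Writing $\alpha=f(R_s,I_s)$, $\alpha_0=f^{\infty}(R,I)$, $\beta=f(S_s,J_s)$, $\beta_0=f^{\infty}(S,J)$ and subtracting, the identity $\alpha\beta-\alpha_0\beta_0=\beta(\alpha-\alpha_0)+\alpha_0(\beta-\beta_0)$ gives the key equation
\[
f(R_s\#S_s,I_s\#J_s)-f^{\infty}(R\#S,I\#J)=(F_S-\beta)(\alpha-\alpha_0)+(F_R-\alpha_0)(\beta-\beta_0)
\]
pointwise on $[1,\infty)$.

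Part (1) is then immediate: each of the four factors on the right is nonnegative. Indeed $F_S-\beta\geq 0$ since $\ell(S_s/J_s^{[q]})_m\leq\ell((S_s)_m)$ gives $f(S_s,J_s)\leq F_{S_s}=F_S$, and $F_R-\alpha_0\geq 0$ since $f^{\infty}(R,I)=\lim_{s'\to s_0}f(R_{s'},I_{s'})\leq\lim_{s'\to s_0}F_{R_{s'}}=F_R$; and $\alpha-\alpha_0\geq 0$, $\beta-\beta_0\geq 0$ are the standing hypotheses. The forward implication of (2) is equally immediate from the two product formulas: if $\alpha\equiv\alpha_0$ and $\beta\equiv\beta_0$, the two right-hand sides coincide.

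The substance is the reverse implication of (2). If $f(R_s\#S_s,I_s\#J_s)=f^{\infty}(R\#S,I\#J)$, the key equation exhibits a sum of two nonnegative functions that vanishes, so $(F_S-\beta)(\alpha-\alpha_0)\equiv 0$ and $(F_R-\alpha_0)(\beta-\beta_0)\equiv 0$. This is exactly where the hypotheses $I_s\cap(R_s)_1\neq 0$ and $J_s\cap(S_s)_1\neq 0$ enter. If $0\neq\ell'\in J_s\cap(S_s)_1$, then $(\ell')^{q}\in J_s^{[q]}$, and since $S_s$ is a domain, multiplication by $(\ell')^{q}$ is injective, whence $\ell(S_s/J_s^{[q]})_m\leq\ell((S_s)_m)-\ell((S_s)_{m-q})$; dividing by $q^{\dim S-1}$ and letting $n\to\infty$ gives $f(S_s,J_s)(x)\leq F_S(x)-F_S(x-1)<F_S(x)$ for all $x>1$, as $F_S(x-1)>0$ there. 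So $F_S-\beta>0$ on $(1,\infty)$, and the first vanishing forces $\alpha=\alpha_0$ on $(1,\infty)$, \ie $f(R_s,I_s)=f^{\infty}(R,I)$ there. Running the same computation with $0\neq\ell\in I_s\cap(R_s)_1$ gives $f(R_s,I_s)(x)<F_R(x)$ for $x>1$; together with $\alpha=\alpha_0$ just obtained, this yields $F_R-\alpha_0>0$ on $(1,\infty)$, so the second vanishing forces $\beta=\beta_0$ on $(1,\infty)$, \ie $f(S_s,J_s)=f^{\infty}(S,J)$ there. Finally, all four functions are continuous---the $f(-,-)$ by the properties of the HK density function proved in [T4], and $f^{\infty}(-,-)$ as a uniform limit of such (cf.\ Theorem~\ref{t2})---so equality on $(1,\infty)$ propagates to the endpoint $x=1$, and the desired equalities hold on all of $[1,\infty)$.

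I expect the only genuinely substantive point to be the strict gap $f(S_s,J_s)<F_S$ (respectively $f(R_s,I_s)<F_R$) on $(1,\infty)$ coming from a linear form in the ideal; this is precisely what makes the linear-form hypotheses do their work, and without them the reverse implication can fail. The remaining care is routine: justifying the termwise passage to the limit in the finite-level product formula (pointwise convergence of each factor, all terms supported in $[1,n_0\mu]$), and invoking continuity of $f^{\infty}(-,-)$ only to handle the single endpoint $x=1$.
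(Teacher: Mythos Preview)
Your argument is correct and follows essentially the same route as the paper. Your single identity
\[
f(R_s\#S_s,I_s\#J_s)-f^{\infty}(R\#S,I\#J)=(F_S-\beta)(\alpha-\alpha_0)+(F_R-\alpha_0)(\beta-\beta_0)
\]
is an equivalent (and tidier) repackaging of the chain of inequalities the paper writes out; the paper effectively uses the companion factorization $(F_R-\alpha)(\beta-\beta_0)+(F_S-\beta_0)(\alpha-\alpha_0)$, and the two expand to the same expression. The bootstrapping in part~(2)---use a linear form in $J_s$ to get $F_S-\beta>0$ on $(1,\infty)$, deduce $\alpha=\alpha_0$, then use a linear form in $I_s$ together with $\alpha=\alpha_0$ to get $F_R-\alpha_0>0$, deduce $\beta=\beta_0$---matches the paper's ``retracing'' argument exactly.

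The one place your write-up differs is the endpoint $x=1$. You close the gap by appealing to continuity of $f^{\infty}$ as a uniform limit; this is plausible but not quite what Theorem~\ref{t2} literally asserts, and would need a line of justification. The paper instead observes directly that for $0\leq x\leq 1$ one always has $f(R_s,I_s)(x)=F_R(x)=f^{\infty}(R,I)(x)$ (since $(I_s^{[q]})_m$ has bounded dimension for $m\leq q$), which settles $x=1$ without any continuity statement. Either way the point is minor; your proof is sound.
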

\begin{proof} (1)\quad  Let us denote  $f^{\infty} = f^{\infty}(R,I)$ and 
$g^{\infty} = g^{\infty}(S,J)$ and  
  denote $f^s = f(R_s, I_s)$ and $g^s 
= f(S_s, J_s)$.

We know, by the multiplicative property of the HK density functions (see Proposition~2.18 of [T4]), that 
$$\begin{array}{lcl}
f(R_s\#S_s,I_s\#J_s)(x)
& = &  F_R(x)g^s(x) + F_S(x)f^s(x) - f^s(x)g^s(x)\\
&  = &  \left(F_R(x)-f^s(x)\right) g^s(x) +F_S(x) f^s(x)\\
& \geq &  \left(F_R(x)-f^s(x)\right) g^{\infty}(x) +F_S(x) 
f^s(x)\\
& = &  F_R(x)g^{\infty}(x) + f^s(x)\left[F_S(x)-
g^{\infty}(x)\right]\\
& \geq &  F_R(x)g^{\infty}(x) + f^{\infty}(x)
\left[F_S(x)-g^{\infty}(x)\right]\\
& = & f^{\infty}(R\#S, I\#J)(x),\end{array}$$
where $3^{rd}$ and $5^{th}$ inequalities hold as $F_R(x) \geq f^s(x)$ and 
$F_S(x)\geq g^s(x)$, for every  $s\in Spec~A$, and the last equality follows from 
Proposition~\ref{p2}.

\noindent{(2)}\quad Suppose $I$ and $J$ are the ideals of $R$ and $S$ respectively, 
and $s\in \Spec(A)$ is a closed point
such that $I_s\cap (R_s)_1\neq 0$ and $J_s\cap (S_s)_1 \neq 0$ then 
we 

\vspace{2pt}

\noindent{CLAIM}.\quad $F_R(x) > f^s(x)$ and $G_S(x) > g^s(x)$, for all 
$x\geq 1$. 

\vspace{2pt}

\noindent{\underline{Proof of the claim}}:\quad Enough to prove that $F_R(x+1) > f^s(x+1)$, 
for $x>0$. Choose an  integer $n_0$ such that $x\geq 1/p^{n_0}$.
where $p = \mbox{char}~k(s)$. Let $q=p^n$ for 
some $n$. 
For a given nonzero $y\in I_s\cap (R_s)_1$, we have an injective map 
of $R_s$-linear map 
$$\oplus_{m\geq 0} (R_s)_m \longrightarrow \oplus_{m\geq 0}(I_s^{[q]})_{m+q},$$
 of degree $q$, given by the multiplication by element $y^q$.
Therefore $\ell(I_s^{[q]})_{m+q}\geq \ell(R_s)_m$, for all $m\geq 0$.
Since $\lfloor xq\rfloor = m$ if and only if 
$\lfloor (x+1)q\rfloor = m +q$, we have
$\ell(I_s^{[q]})_{\lfloor(x+1)q\rfloor}\geq \ell(R_s)_{\lfloor{xq}\rfloor}$. Hence 

$$\ell(R_s/I_s^{[q]})_{\lfloor(x+1)q\rfloor}\leq
\ell(R_s)_{\lfloor(x+1)q\rfloor} - \ell(R_s)_{\lfloor{xq}\rfloor}.$$

Therefore
$$f_n(R_s, I_s)(x+1) \leq F_n(R_s)(x+1) -F_n(R_s)(x).$$
But 
$$ \lim_{n\to \infty} F_n(R_s)(x)\geq
\frac{1}{(d-1)!}\frac{e(R)}{(p^{n_0})^{d-1}} > 0.$$
This implies 
$f^s(x+1) = f(R_s, I_s)(x+1) < F_R(x+1)$.
This proves the  claim.

Now, retracing the  above argument, we note  that $f(R_s\#S_s, I_s\#J_s) = 
f^{\infty}(R\#S, I\#J)$ if and only if 
$$\left[F_R(x)-f^s(x)\right] g^{s}(x) =  \left[F_R(x)-f^s(x)\right] 
g^{\infty}(x)$$
and
$$\left[F_S(x)-g^{\infty}(x)\right] f^{s}(x) = 
\left[F_S(x)-g^{\infty}(x)\right] f^{\infty}(x).$$ Hence, by  the above 
claim, we have
 $f^s(x) = f^{\infty}(x)$ and $g^s(x) = g^{\infty}(x)$ for all 
$x > 1$.
For $0\leq x \leq 1$, we always have $F_R(x) = f^s(x) = f^{\infty}(x)$
and $F_S(x) = g^s(x) = g^{\infty}(x)$. 
This proves the proposition.
\end{proof}

\begin{ex}\label{ex5} Let $R$ be a two dimensional  
 standard graded normal domain, where $R_0 =k$ is an algebraically closed field of $\Char~0$.
 Let $I\subset R$ be a homogeneous ideal of finite colength
and generated by homogeneous elements, say $h_1,\ldots, h_\mu$ of
positive
degrees $d_1, \ldots, d_\mu$ respectively. Let $X = \rm{Proj}~R$ be 
 the corresponding nonsingular projective curve.

Let $(A, R_A, I_A)$ and $(A, X_A, V_A)$ denote  spreads for $(R,I)$ and 
$(X, V)$ respectively. 
We have a short exact sequence of $\sO_{X_A}$-sheaves 

 Then we have an associated  canonical exact
 sequence of locally free sheaves of $\sO_X$-modules
(moreover the sequence is locally split exact).

\begin{equation}\label{}
0\longto V_A\longto \oplus_i\sO_{X_A}(1-d_i)\longto \sO_{X_A}(1)\longto 0,\end{equation}
Restricting to the fiber $X_s$ we have 
  the following exact
 sequence of locally free sheaves of $\sO_{X_s}$-modules.

\begin{equation}\label{e2}
0\longto V_s\longto \oplus_i\sO_{X_s}(1-d_i)\longto \sO_{X_s}(1)\longto 0,\end{equation}

Moreover, we can choose a spread $(A, X_A, V_A)$ such that  there is a filtration
 $$ 0 = E_{0A} \subset E_{1A} \subset \cdots \subset E_{lA} \subset E_{l+1A} = 
V_A,$$
 of locally free sheaves of $\sO_{X_A}$-modules 
such that 
 $$ 0 = E_{0s} \subset E_{1s} \subset \cdots \subset E_{ls} \subset E_{l+1s} =
V_s$$
is the Harder-Narasimhan filtration of the vector bundles over $X_s$ for $s\in \Spec~A$.
 \end{ex}

\vspace{5pt} 

\begin{thm}\label{vb}Let $(R, I)$, $(A, R_A,I_A)$ and 
$(A, X_A, V_A)$ be given as above. Then, for every closed point 
$s\in \Spec(A)$, we have 
\begin{enumerate}
\item $f(R_s, I_s) \geq f^{\infty}(R, I)$ and 
\item $f(R_s, I_s) = f^{\infty}(R, I)$ if and only 
if  the filtration 
$$ 0 = E_{0s} \subset E_{1s} \subset \cdots \subset E_{ls} \subset E_{l+1s} \subset
V_s$$
 is the strongly
 semistable HN filtration of $V_s$  on $X_s$,
{\it i.e.}, 
$$ 0 = F^{n*}E_{0s} \subset F^{n*}E_{1s} \subset \cdots \subset 
F^{n*}E_{ls} \subset F^{n*}E_{l+1s} =
F^{n*}V_s $$ is the HN filtration of $F^{n*}V_s$.
\end{enumerate}
\end{thm}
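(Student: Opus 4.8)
The plan is to reduce the statement to an explicit piecewise-linear formula for the Hilbert--Kunz density function of a polarized curve in terms of the \emph{strong} Harder--Narasimhan (HN) data of the syzygy bundle, and then to compare the ``slope polygon'' of $V_s$ with the one of $V_{s_0}$. Since $d=2$, Proposition~\ref{p1} already gives uniform convergence of $\{f_n(R_s,I_s)\}_n$, so $f(R_s,I_s)=\lim_n f_n(R_s,I_s)$ exists and, by Lemma~\ref{l1}, is supported in $[1,n_0\mu]$. From the long exact sequence~(\ref{*}) with $\sQ=\sO_{X_s}$ one has, exactly,
$$\coker\phi_{m,q}(\sO_{X_s})=h^0(\sO_{X_s}(m+q))-\sum_i h^0(\sO_{X_s}(m+q-qd_i))+h^0(F^{n*}V_s(m)),$$
which by Remark~\ref{r1}(3) differs from $\ell(R_s/I_s^{[q]})_{m+q}$ by at most a constant independent of $q$. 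Letting $q=p^n\to\infty$ with $m/q\to x-1$ and computing each $h^0$ of a semistable twist on the curve $X_s$ by Riemann--Roch (the genus and $H^1$ terms are $O(q^0)$ and wash out after dividing by $q$), one obtains, for $x\ge 1$,
$$f(R_s,I_s)(x)=\delta\,x-\delta\sum_{i=1}^{\mu}(x-d_i)_+\;+\;\sum_{\alpha} r_\alpha^{\infty}(V_s)\bigl(\mu_\alpha^{\infty}(V_s)+\delta(x-1)\bigr)_+,\qquad (a)_+:=\max\{a,0\},$$
where $\delta=e_0(R,{\bf m})=\deg\sO_X(1)$ and $\{r_\alpha^{\infty}(V_s),\mu_\alpha^{\infty}(V_s)\}_\alpha$ are the ranks and slopes of the graded pieces of the strong HN filtration of $V_s$ (the unique filtration whose $F^{n*}$-pullback is the HN filtration of $F^{n*}V_s$ for all $n$; equivalently $\mu_\alpha^{\infty}(V_s)=\mu_\alpha(F^{n*}V_s)/p^n$ for $n\gg 0$). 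This closed form is the curve case of the density-function theory of [T4], built on [B], [T1], and I would quote it here rather than reprove it.

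Next I would identify $f^\infty(R,I)$. By the choice of spread (Example~\ref{ex5}) the filtration $E_{\bullet s}$ is the HN filtration of $V_s$ for every closed $s$, so the HN ranks and slopes $\{r_i,\mu_i\}$ of $V_s$ are independent of $s$ and equal to those of $V_{s_0}$; and by [Mar] there is a dense open $\Spec(A')\subseteq\Spec(A)$ over which each graded piece $E_{is}/E_{i-1,s}$ is actually \emph{strongly} semistable, so that over $\Spec(A')$ the strong HN data of $V_s$ collapses to $\{r_i,\mu_i\}$. By the formula above, $f(R_s,I_s)$ then equals the single function
$$f^\infty(R,I)(x):=\delta\,x-\delta\sum_i(x-d_i)_+\;+\;\sum_i r_i\bigl(\mu_i+\delta(x-1)\bigr)_+$$
for all $s\in\Spec(A')$; in particular $f^\infty(R,I)=\lim_{s\to s_0}f(R_s,I_s)$ exists and is this piecewise-linear function.

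Now for the comparison, fix any closed $s$ and subtract: with $t:=\delta(x-1)\ge 0$ the line-bundle parts cancel, leaving
$$f(R_s,I_s)(x)-f^\infty(R,I)(x)=\sum_\alpha r_\alpha^{\infty}(V_s)\bigl(\mu_\alpha^{\infty}(V_s)+t\bigr)_+-\sum_i r_i\bigl(\mu_i+t\bigr)_+ .$$
The point is that the strong HN polygon of $V_s$ (concave, edge-slopes $\mu_\alpha^{\infty}(V_s)$, horizontal lengths $r_\alpha^{\infty}(V_s)$) dominates the HN polygon of $V_s$ (edge-slopes $\mu_i$, lengths $r_i$), both having endpoints $(0,0)$ and $(\rank V_s,\deg V_s)$: indeed $p^{-n}\cdot(\text{HN polygon of }F^{n*}V_s)$ is non-decreasing in $n$, since for any $E$ and any $r$ the maximal-degree rank-$r$ subsheaf of $F^*E$ has degree at least $p$ times that of the maximal-degree rank-$r$ subsheaf of $E$. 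Since $u\mapsto(u+t)_+$ is convex and non-decreasing, domination of polygons with common endpoints makes the displayed difference $\ge 0$ for every $t$ (the majorization / Hardy--Littlewood--P\'olya inequality in its form for slope-polygons), which is assertion~(1). For~(2): because $V_s\hookrightarrow\oplus_i\sO_{X_s}(1-d_i)$, all HN and strong-HN slopes of $V_s$ are $\le\delta(1-\min_i d_i)\le 0$, so the displayed difference vanishes for $t\le 0$ and its breakpoints, as a function of $t$, are exactly the values $-\mu_\alpha^{\infty}(V_s)\ge 0$ and $-\mu_i\ge 0$; hence its vanishing on $\{t\ge 0\}$ forces, on reading off breakpoints and slope-jumps, the strong HN data of $V_s$ to equal $\{r_i,\mu_i\}$. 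By uniqueness of the HN filtration this means $E_{\bullet s}$ is the strong HN filtration of $V_s$, i.e.\ $F^{n*}E_{\bullet s}$ is the HN filtration of $F^{n*}V_s$ for every $n$ --- this is~(2).

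The main obstacle is the first step: turning $\ell(R_s/I_s^{[q]})_{m+q}$ into the closed form genuinely rigorously. One must control the error terms uniformly in $q$, partition $[1,\infty)$ according to which of the line bundles $\sO_{X_s}(m+q-qd_i)$ and which HN pieces of $F^{n*}V_s(m)$ contribute to $H^0$ (resp.\ $H^1$), and use the convergence of $\mu_{\max}(F^{n*}(-))/p^n$ --- but this is precisely the curve case of [T4], so it can be invoked. Everything after that is the convexity/polygon bookkeeping above; the one genuinely new input needed for the ``only if'' half of~(2) is the strict inequality $\mu_{\max}(F^{n*}W)/p^n>\mu(W)$ forced the moment a semistable $W$ fails to be strongly semistable, which is what makes~(2) an equivalence rather than just an implication.
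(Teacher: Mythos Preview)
Your overall strategy---express $f(R_s,I_s)$ as a piecewise-linear function of the strong HN data, identify $f^\infty$ with the analogous function of the ordinary HN data, then compare via slope polygons---is sound, and your polygon/Legendre-transform argument for parts~(1) and~(2) is correct and in fact cleaner than the paper's case-by-case analysis. The identity $\sum_\alpha r_\alpha(\mu_\alpha+t)_+=\max_{0\le r\le\rank V}(P(r)+tr)$, together with the elementary fact that $p^{-n}P^{HN}_{F^{n*}V_s}$ increases in $n$, gives (1) immediately, and recovering the polygon from its Legendre transform handles the ``only if'' in~(2).

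There is, however, a genuine gap in your identification of $f^\infty$. You write that ``by [Mar] there is a dense open $\Spec(A')$ over which each graded piece $E_{is}/E_{i-1,s}$ is actually \emph{strongly} semistable.'' Maruyama's theorem is about openness of \emph{semistability}; it says nothing about strong semistability, and indeed the question of whether a semistable bundle in characteristic~$0$ has strongly semistable reduction for a Zariski-dense set of primes is in general open (and known to fail for a positive density of primes in examples). So you cannot conclude that the strong HN data of $V_s$ collapses to $\{r_i,\mu_i\}$ on any nonempty open set. The paper circumvents this: it invokes Lemma~1.14 of [T2], which gives a uniform bound $a_{ij}=\mu_{i+1}+O(1/p)$ with the implied constant depending only on $\deg X$ and $\rank V$ (equation~(\ref{evb}) in the paper). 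This forces the strong HN slopes to converge to the HN slopes as $p\to\infty$, whence $\lim_{s\to s_0}f(R_s,I_s)$ exists and equals your claimed formula for $f^\infty$. Once you replace your appeal to [Mar] by this estimate from [T2] (or, equivalently, by the polygon bound $P^{\mathrm{strong}}_{V_s}-P^{HN}_{V_s}=O(1/p)$ that also follows from [T2]), the rest of your argument goes through unchanged.
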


\begin{proof}
We fix such an $s\in \Spec~A$ and denote the HN filtration of $V_s$ by 
 $$ 0 = E_{0} \subset E_{1} \subset \cdots \subset E_{l} \subset E_{l+1} \subset
V_s$$

By Theorem~2.7 of [L], there is 
$n_1\geq 1$ be such that $F^{n_1*}V_s$ has the strong HN filtration 
(note $n_1$ may depend on $s$).

Then, by Lemma~1.8 of [T2], for $\Char~k(s) > 4({\rm genus}(X_s))\rank(V_s)^3$, the HN filtration of 
$F^{n_1*}V_s$ is
$$0=E_{00}\subset E_{01} \subset \cdots \subset E_{0t_0}\subset 
F^{n_1*}E_1 = E_{10}\subset \cdots \subset $$
$$E_{i-1(t_{i-1}+1)} = F^{n_1*}E_i = E_{i0} \subset E_{i1}\subset \cdots 
\subset E_{it_i}
\subset E_{i(t_i+1)} =  F^{n_1*}E_{i+1} = E_{i+1,0}\subset \cdots \subset F^{n_1*}V_s.$$
Let, for $i\geq 0$ and $j\geq 1$,
$$ a_{ij} = \frac{1}{p^{n_1}}\mu\left({E_{ij}}/{E_{i,{j-1}}}\right),
~~\mbox{and}~~~r_{ij} = \rank(E_{ij}/E_{i, (j-1)}).$$
Let $$\mu_0 = 1~~\mbox{and for}~~i\geq 1~~\mbox{let}~~~\mu_i = \mu(E_i/E_{i-1})~~~
\mbox{and}~~~r_i = \rank(E_i/E_{i-1}).$$
Note that, for any $i\geq 1$, the only possible inequalities are  
$$a_{(i-1)1}\geq \mu_i \geq a_{i-1,t_{i-1}+1}>\ldots  > a_{i0} \geq \mu_{i+1} 
\geq a_{it_i+1}.$$ 

By Lemma~1.14 of [T2], for a given $i$,  
\begin{equation}\label{evb}a_{ij} = \mu_{i+1}+O(1/p),\end{equation} 
where, by $O(1/p)$ we mean  $O(1/p)= C/p$, where $|C|$ is  bounded by a constant 
depending only on the degree of $X$ and rank of $V$.
Now it is easy to check the following:

\vspace{5pt}

\noindent{\bf Claim}\quad If $1-a_{ij_0}/d \leq x < 1-a_{i(j_0+1)}/d$, 
for some $i\geq 0$ and $j\geq 1$,  then 
\begin{enumerate}
\item $$\begin{array}{lcl} 
-\left[a_{kj}r_{kj}+d(x-1)r_{kj}
\right] & = & O(\frac{1}{p}), ~~\mbox{for any}~~ 1\leq k\leq t_{i}+1,~~~ \mbox{and}\\
-\left[a_{kj}r_{kj}+d(x-1)r_{kj}\right] & > & 0~~~\mbox{if}~~ k\geq  j_0+1~~~\mbox{and}~~~ \\
 {} & \leq & 0~~~\mbox{ if}~~ k\leq j_0.\end{array}$$
\item $$ -\sum_{k\geq j_0+1}\left[a_{jk}r_{kj}+d(x-1)r_{kj}\right]
\geq -\left[\mu_{i+1}r_{i+1}+d(x-1)r_{i+1}\right].$$
\end{enumerate}

We also recall that, for $x$ as above 
(by Example~3.3 of [T4]), we have 
$$ f(R_s, I_s)(x) =
-\sum_{j\geq j_0+1}\left[a_{ij}r_{ij}+d(x-1)r_{ij}\right] -
\sum_{k\geq i+2, j\geq 1}\left[a_{kj}r_{kj}+d(x-1)r_{kj}
\right]$$ 

Let $x\geq 1$ then $1-\mu_i/d \leq x < 1-\mu_{i+1}/d$, for some $i\geq 0$.
Now there are three possiblilities.
\begin{enumerate}
\item $1-\frac{\mu_i}{d} \leq x < 1-\frac{a_{i-1,(t_{i-1}+1}}{d}$ then $1-\frac{a_{i-1,j_0}}{d} \leq x < 
1-\frac{a_{i-1,(j_0+1)}}{d}$, for some $j_0\geq 0$. Then 
$$f(R_s,I_s) =   
- \sum_{j\geq j_0+1}\left[a_{i-1,j}r_{i-1,j}+d(x-1)r_{i-1,j}\right] -
\sum_{k\geq i+1}\left[\mu_{k}r_{k}+d(x-1)r_{k} 
\right].$$
\item $1-\frac{a_{i-1,(t_{i-1}+1}}{d} \leq x < 1-\frac{a_{i1}}{d}$ then 
$$f(R_s,I_s) =   - \sum_{k\geq i+1}\left[\mu_{k}r_{k}+d(x-1)r_{k} 
\right].$$
\item $1-\frac{a_{i1}}{d} \leq  x < 1-\frac{\mu_{i+1}}{d}$. Then  $1-\frac{a_{ij_0}}{d} \leq 
x < 1-\frac{a_{i(j_0+1)}}{d}$, for some $j_0\geq 0$. Then  
$$ f(R_s, I_s)(x) = - \sum_{j\geq j_0+1}\left[a_{ij}r_{ij}+d(x-1)r_{ij}\right] -
\sum_{k\geq i+2}\left[\mu_{k}r_{k}+d(x-1)r_{k} \right].$$
\end{enumerate}
Hence  $f^{\infty}(R,I) = \lim_{s\to s_0} f(R_s, I_s)$ exists and 
 $$\begin{array}{lcl}
1 \leq x < 1-\mu_1/d & \implies & f^{\infty}(R, I)(x) = 
-\left[\sum_{i\geq 1}\mu_ir_i+d(x-1)r_i\right] \\
1-\mu_i/d  \leq x < 1-\mu_{i+1}/d  & \implies & f^{\infty}(R, I)(x) =
- \left[\sum_{i\geq {i+1}}\mu_ir_i+d(x-1)r_i\right].\end{array}$$
Moreover 
$f(R_s,I_s) \geq f^{\infty}(R, I)$ for $1\leq x < 1-a_{l+1,0}/d$ and 
$f(R_s,I_s) = f^{\infty}(R, I)$ otherwise. This proves part~(1) of the theorem.
\vspace{5pt}

\noindent{(2)}\quad If $V_s$ has stongly semistable HN filtration
then it is obvious that $f(R_s, I_s) = f^{\infty}(R, I)$. Let, as before, 
$n_1$ be such that $F^{n_1*}V$ has a strongly semistable 
HN filtration in the sense of [L], Theorem~2.7.

If the HN filtration of $V_s$ is not strongly semistable then
$$0= F^{n_1*}E_0\subset F^{n_1*}E_1\subset \cdots \subset F^{n_1*}E_l 
\subset F^{n_1*}V$$
is not the HN filtration of $F^{n_1*}V$. Therefore there is an $i\geq 0$ such that 
$$F^{n_1*}E_i = E_{i0} \subset E_{i1}\subset \cdots \subset F^{n_1*}E_{i+1},$$ 
where $E_{i2}\subseteq F^{n_1*}E_{i+1}$. Since $a_{i1} > \mu_{i+1}$, one can choose
$1-a_{i1}/d < x_0 \leq 1-a_{i2}/d$. Now  
$$ f(R_s, I_s)(x) = - \sum_{j\geq 2}\left[a_{ij}r_{ij}+d(x-1)r_{ij}\right] -
\sum_{k\geq i+2}\left[\mu_{k}r_{k}+d(x-1)r_{k} \right].$$
$$ = \left[a_{i1}r_{i1}+d(x-1)r_{i1}\right] -
\sum_{k\geq i+1}\left[\mu_{k}r_{k}+d(x-1)r_{k} \right] > f^{\infty}(R,I).$$
This proves the Theorem.\end{proof}

\begin{cor}\label{e5}Let $C_1 = \mbox{Proj}~S_1,\ldots, C_n = \mbox{Proj}~S_n$ be 
nonsingular projective curves, over a common field of characteristic $0$. Suppose 
each  syzygy bundle 
$V_{C_i}$, given by 
$$0\longto V_{C_i}\longto H^0(C_i, \sO_{C_i}(1))\tensor \sO_{C_i}
\longrightarrow \sO_{C_i}(1)\longto 0, $$ is 
semistable. ({\it e.g.}, if $\deg \sO_{C_i}(1) > 2\mbox{genus}~(C_i)$ then $V_{C_i}$ is 
semistable, see [KR] and Lemma~2.1 of [T6]).
  
Then there is $n_0$ such that for all $p\geq n_0$ we have 
\begin{enumerate}
\item $f((S_1\#\cdots \#S_n)_{p})(x) \geq
f^{\infty}(S_1\#\cdots\#S_n)(x)$ and
\item  $f((S_1\#\cdots \#S_n)_{p})(x) =
f^{\infty}(S_1\#\cdots\#S_n)(x)$, for all $x\in \R$
if and only if  (mod $p$) reduction of the bundle 
 $V_1\boxtensor \cdots \boxtensor V_n$ is strongly semistable 
on $(C_1\times \cdots\times C_n)_p$.
\end{enumerate}
In particular 
\begin{enumerate}
\item $e_{HK}^{\infty}(S_1\#\cdots\#S_n)$ exists and 
$e_{HK}((S_1\#\cdots\#S_n)_p) \geq e_{HK}^{\infty}(S_1\#\cdots\#S_n)$ and
\item $e_{HK}((S_1\#\cdots\#S_n)_p) =  e_{HK}^{\infty}(S_1\#\cdots\#S_n)$ 
if and only if (mod $p$) reduction of the bundle  
$V_1\boxtensor \cdots \boxtensor V_n$ is strongly semistable on
$(C_1\times \cdots\times C_n)_p$,
\end{enumerate}
where the HK density functions and HK multiplicities are considered with respect to the ideal 
${\bf m}_1\#\cdots\#{\bf m}_n$ for the graded maximal ideals ${\bf m}_i \subset S_i$.
\end{cor}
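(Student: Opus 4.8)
The plan is to reduce the assertion for the Segre product $S_1\#\cdots\#S_n$ to the single‑curve case of Theorem~\ref{vb} by means of the multiplicativity results Propositions~\ref{p2} and~\ref{p3}, and then to rephrase the resulting curve‑by‑curve criterion in terms of the external tensor product on $C_1\times\cdots\times C_n$. First I would fix a finitely generated $\Z$‑subalgebra $A$ large enough to be a common spread for all the pairs $(S_i,{\bf m}_i)$ (passing to $\bar k$ first if $k$ is not algebraically closed), shrink $\Spec(A)$ so that Remark~\ref{spread} applies to each $S_i$ and to their Segre products, and, using that each syzygy bundle $V_{C_i}$ is semistable over the characteristic‑$0$ ground field, arrange that the relative Harder-Narasimhan filtration of the spread of each $V_{C_i}$ is trivial over $\Spec(A)$. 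Then for every closed point $s\in\Spec(A)$ the reduction $(V_{C_i})_s$ is semistable with trivial HN filtration, so ``strongly semistable HN filtration'' in Theorem~\ref{vb}(2) simply means that $(V_{C_i})_s$ is strongly semistable. Taking $n_0$ large enough that all primes $p\geq n_0$ lie in the image of $\Spec(A)$ and exceed the numerical bound $\Char~k(s)>4({\rm genus}(C_{i,s}))\rank(V_{C_i})^3$ of Theorem~\ref{vb} for every $i$, all statements below hold for $p\geq n_0$.

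Since Theorem~\ref{vb} already gives the existence of $f^\infty(S_i,{\bf m}_i)$ for each $i$, iterating Proposition~\ref{p2} — noting that a Segre product of standard graded domains over $\bar k$ is again one, so the hypotheses persist — yields the existence of $f^\infty(S_1\#\cdots\#S_n)$; moreover it is continuous, being assembled from the polynomial functions $F_{S_i}$ and from the continuous functions $f^\infty(S_i,{\bf m}_i)$ written out explicitly in the proof of Theorem~\ref{vb}. Writing $e_{HK}(R_s,I_s)=e(R_s,{\bf m}_s)+\int_1^\infty f(R_s,I_s)$ as in the proof of Theorem~\ref{t2}, with $e(R_s,{\bf m}_s)$ independent of $s$ and $\int f((S_1\#\cdots\#S_n)_s)\to\int f^\infty$ as $s\to s_0$, this gives the existence of $e_{HK}^\infty(S_1\#\cdots\#S_n)$. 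For the inequality, apply Theorem~\ref{vb}(1) to each factor and iterate Proposition~\ref{p3}(1): this gives $f((S_1\#\cdots\#S_n)_p)\geq f^\infty(S_1\#\cdots\#S_n)$ for $p\geq n_0$, and integrating (using that $e(\cdot,{\bf m})$ is constant in the family) gives $e_{HK}((S_1\#\cdots\#S_n)_p)\geq e_{HK}^\infty$.

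For the equality statements I would iterate Proposition~\ref{p3}(2): its hypothesis $I_s\cap(R_s)_1\neq 0$ holds for graded maximal ideals and passes to Segre products, whose degree‑one part $(S_1)_1\otimes\cdots\otimes(S_n)_1$ is nonzero, so $f((S_1\#\cdots\#S_n)_p)=f^\infty(S_1\#\cdots\#S_n)$ if and only if $f((S_i)_p,({\bf m}_i)_p)=f^\infty(S_i,{\bf m}_i)$ for every $i$, which by Theorem~\ref{vb}(2) is equivalent to strong semistability of $(V_{C_i})_p$ for every $i$. It then remains to match this with strong semistability of $V_{(C_1)_p}\boxtensor\cdots\boxtensor V_{(C_n)_p}$ on $(C_1\times\cdots\times C_n)_p$ with respect to $\sO(1,\dots,1)$; since the Frobenius of a product is the product of the Frobenii and commutes with $\boxtensor$, this reduces to the purely geometric statement that an external tensor product of strongly semistable bundles on smooth projective curves stays semistable under all Frobenius pullbacks, that is, is strongly semistable. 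One direction is an elementary slope computation — a subsheaf destabilizing one factor tensors up to a destabilizing subsheaf of the product, by additivity of the relevant slope for $\sO(1,\dots,1)$ — and the converse, that the external tensor product of strongly semistable bundles on curves with the product polarization is again strongly semistable, is the substantive input; this is the step I expect to be the main obstacle, and I would dispatch it by invoking the known behaviour of (strong) semistability under external tensor products. Finally, the Hilbert-Kunz equality follows since $e_{HK}((S_1\#\cdots\#S_n)_p)-e_{HK}^\infty=\int_1^\infty\bigl(f((S_1\#\cdots\#S_n)_p)-f^\infty\bigr)\,dx$ is the integral of a nonnegative continuous function, hence is zero exactly when that function vanishes identically — that is, by the above, exactly when $V_{(C_1)_p}\boxtensor\cdots\boxtensor V_{(C_n)_p}$ is strongly semistable.
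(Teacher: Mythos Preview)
Your approach is correct and is essentially the paper's own: the paper's proof is the single line ``Proof follows by Proposition~\ref{p3} and Theorem~\ref{vb},'' and you have unpacked exactly that, iterating Proposition~\ref{p3} (with Proposition~\ref{p2} for existence) and invoking Theorem~\ref{vb} on each factor. The one step you flag as the main obstacle --- passing between ``each $(V_{C_i})_p$ is strongly semistable'' and ``$(V_{C_1})_p\boxtensor\cdots\boxtensor(V_{C_n})_p$ is strongly semistable on the product with respect to $\sO(1,\ldots,1)$'' --- is left entirely implicit in the paper, so your identification of it as a genuine ingredient (dispatched by the known preservation of strong semistability under external tensor products, e.g.\ via Ramanan--Ramanathan) is a fair elaboration rather than a deviation.
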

\begin{proof}Proof follows by  Proposition~\ref{p3} and Theorem~\ref{vb}.\end{proof}

\begin{rmk}With the notations and assumptions as in the corollary above, one can easily compute
$f^{\infty}(S_1\#\cdots\#S_n)$, in terms of ranks of $V_i$ and degrees of $C_i$. In particular, if  
$d_1 = \deg~C_1$ and $d_2 = \deg~C_2$ with $r= \rank~V_1 \geq  s= \rank~V_2$ then 
it follows that
$$e_{HK}^{\infty}(S_1\#S_2) = \frac{d_1d_2}{3}+
d_1d_2\left[\frac{1}{2s}+\frac{1}{6s^2}+\frac{1}{6r^2}+\frac{s}{6r^2}\right].$$
\end{rmk}

\begin{notations}\label{n5}
Let $R= k[x,y,z]/(h)$ be a plane trinomial curve of degree $d$, {\it i.e.}, 
$h = M_1+M_2+M_3$
where $M_i$ are monomials of degree $d$. As given in Lemma~2.2 of [Mo2], 
one can divide such an $h$ in two types:
\begin{enumerate}
\item $h$ is irregular, {\it i.e.}, one of the points 
$(1,0,0)$, $(0,1,0)$, $(0,0,1)$
of $\P^2$ has multiplicity $\geq d/2$ on the plane curve $h$. Here we define 
$\lambda_R = 1$. 
\item $h$ is regular and hence is one of the following type (upto a 
change of variables):
\begin{enumerate}
\item $h = x^{a_1}y^{a_2} + y^{b_1}z^{b_2} +z^{c_1}x^{c_2}$, where 
$a_1, b_1, c_1 > d/2$. Here we define 
$\alpha = a_1+b_1-d$, $\beta = a_1+c_1-d$, $\nu = b_1+c_1-d$ and 
$\lambda = a_1b_1+a_2c_2-b_1c_2$.
\item $h = x^d+x^{a_1}y^{a_2}z^{a_3} + y^bz^c$, where $a_2, c > d/2$. Here we define 
$\alpha = a_2$, $\beta = c$, $\nu = a_2+c-d$ and 
$\lambda = a_2c-a_3b$.
\end{enumerate} 
We denote $\lambda_h = \lambda/a$, where $a = \mbox{g.c.d.}~(\alpha, \beta, \nu, \lambda)$.

\end{enumerate} 
\end{notations}

\begin{cor}\label{c5}Let $S_1, \ldots, S_n$ be a set of irreducible plane trinomial 
curve given by trinomials $h_1, \ldots, h_n$ of degrees $d_1, \ldots, d_n\geq 4$ 
respectively, over a field of characteristic $0$. Then there are spreads
$\{(A_i, S_{iA}), {\bf m}_{iA}\}_i$ such that for every closed point $s\in \Spec(A)$, 
\begin{enumerate}
\item $f^s(S_1\#\cdots \#S_n)(x) = f^{\infty}(S_1\#\cdots \#S_n)(x), 
~~~\mbox{for all}~~~ x\in \R$

if $\Char~k(s)\equiv\pm 1\pmod{\mbox{l.c.m.}(\lambda_{h_1},\ldots, \lambda_{h_n})},$
where $\lambda_{h_i}$ is given as in
Notations~\ref{n5}. Moreover
\item if one of the curve, say, $S_1$ is given by a symmetric trinomial
$h_1 = x^{a_1}y^{a_2}+y^{a_1}z^{a_2}+z^{a_1}x^{a_2}$ such that $d\neq 5$,  then
$$f^s(S_1\#\cdots \#S_n)(x_0) > f^{\infty}(S_1\#\cdots \#S_n)(x_0)\quad\mbox{if}\quad
\Char~k(s)\equiv\pm l\pmod{\lambda_{h_1}}, $$
for some $x_0\in \R$ and for some $l\in (\Z/\lambda_{h_1}\Z)^*$. \end{enumerate}
\end{cor}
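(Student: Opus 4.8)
The plan is to peel the Segre product apart, using the multiplicativity results of this section, into the individual trinomial curves $S_i$, and then to quote Monsky's arithmetic analysis of trinomial curves.

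\emph{Step 1: reduction to one curve.} I would first fix a single finitely generated $\Z$-algebra $A$ carrying compatible spreads $(A,S_{iA},{\bf m}_{iA})$ of all the pairs $(S_i,{\bf m}_i)$, enlarging $A$ so that the conclusions of Remark~\ref{spread} and Example~\ref{ex5} hold for each $S_i$ and so that the constant $C$ and the primality bound occurring in Theorem~\ref{vb} are uniform in $i$. By Theorem~\ref{vb}, each $f^{\infty}(S_i,{\bf m}_i)$ exists and $f((S_i)_s,({\bf m}_i)_s)\ge f^{\infty}(S_i,{\bf m}_i)$ for every closed point $s$; by Proposition~\ref{p2}, $f^{\infty}$ of every Segre product $S_i\#\cdots\#S_n$ exists, and Proposition~\ref{p3}(1) propagates the inequality $f^s\ge f^{\infty}$ to these products. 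Since each ${\bf m}_i$, and hence each Segre product ${\bf m}_i\#\cdots\#{\bf m}_n$, contains a linear form, Proposition~\ref{p3}(2) applies at every stage, and iterating it over the $n$ factors gives, for a closed point $s$,
\[
f^s(S_1\#\cdots\#S_n)(x)=f^{\infty}(S_1\#\cdots\#S_n)(x)\ \text{for all }x\in\R
\quad\Longleftrightarrow\quad
f((S_i)_s,({\bf m}_i)_s)=f^{\infty}(S_i,{\bf m}_i)\ \text{for all }i.
\]
For the strict statement of part~(2) I would instead combine the multiplicative formula of Proposition~\ref{p2} with the Claim in the proof of Proposition~\ref{p3} ($F_{S_i}(x)>f((S_i)_s)(x)$ and $F_{S_i}(x)\ge f^{\infty}(S_i)(x)$ for $x\ge 1$): the same one-variable-at-a-time estimate shows that if $f((S_1)_s,({\bf m}_1)_s)(x_0)>f^{\infty}(S_1,{\bf m}_1)(x_0)$ for some $x_0\ge 1$, then $f^s(S_1\#\cdots\#S_n)(x_0)>f^{\infty}(S_1\#\cdots\#S_n)(x_0)$. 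Hence it suffices to prove the two corresponding statements for a single irreducible plane trinomial curve $S=k[x,y,z]/(h)$ with ${\bf m}=(x,y,z)$.

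\emph{Step 2: translation to the syzygy bundle.} Let $V$ be the rank-two syzygy bundle of $S$, defined by $0\to V\to\sO_C^{\oplus 3}\to\sO_C(1)\to 0$ on $C={\rm Proj}~S$. Using the comparison results of this paper (Theorem~\ref{vb}) together with the syzygy/Harder--Narasimhan description of Hilbert--Kunz multiplicities and density functions for (arbitrary, possibly singular) plane curves from [T1] and [Mo3], one gets: for a closed point $s$ with $p=\Char k(s)$ large, $f(S_s,{\bf m}_s)-f^{\infty}(S,{\bf m})$ is a nonnegative continuous function, identically zero exactly when $e_{HK}(S_s,{\bf m}_s)=e_{HK}^{\infty}(S,{\bf m})$, that is, exactly when the mod-$p$ reduction of the Harder--Narasimhan filtration of $V$ is strongly semistable; and when it is not strongly semistable, $f(S_s,{\bf m}_s)(x_0)>f^{\infty}(S,{\bf m})(x_0)$ for some $x_0\ge 1$ (Theorem~\ref{vb}(2) even exhibits such an $x_0$, just below a slope of $V$).

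\emph{Step 3: the arithmetic input and conclusion.} Now I would invoke Monsky's computations for trinomial curves [Mo2], [Mo3]: for an irreducible plane trinomial of degree $\ge 4$, whether the reduction of $V$ is strongly semistable depends only on the residue of $p$ modulo $\lambda_h$ (with $\lambda_h$ as in Notations~\ref{n5}), up to sign, and it is strongly semistable whenever $p\equiv\pm1\pmod{\lambda_h}$. For part~(1): if $p\equiv\pm1\pmod{\mathrm{lcm}(\lambda_{h_1},\dots,\lambda_{h_n})}$, then $p\equiv\pm1\pmod{\lambda_{h_i}}$ with a consistent sign for every $i$, so each corresponding syzygy bundle is strongly semistable after reduction, hence $f((S_i)_s,({\bf m}_i)_s)=f^{\infty}(S_i,{\bf m}_i)$ for all $i$, and the displayed equivalence of Step~1 concludes. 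For part~(2): when $h_1=x^{a_1}y^{a_2}+y^{a_1}z^{a_2}+z^{a_1}x^{a_2}$ with $d\neq 5$, Monsky's explicit formula for symmetric trinomial curves yields a unit $l\in(\Z/\lambda_{h_1}\Z)^{*}$ and a residue class $p\equiv\pm l\pmod{\lambda_{h_1}}$ for which the reduction of the syzygy bundle of $S_1$ is not strongly semistable (the exclusion $d\neq 5$ removes the exceptional symmetric quintic trinomial, whose Hilbert--Kunz multiplicity is $p$-independent); by Step~2 this gives $f((S_1)_s,({\bf m}_1)_s)(x_0)>f^{\infty}(S_1,{\bf m}_1)(x_0)$ for some $x_0\ge 1$, and the Step~1 reduction then yields the stated strict inequality for the Segre product.

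The genuinely new part is Steps~1--2, the passage through Proposition~\ref{p3} and Theorem~\ref{vb}; the arithmetic heart --- pinning down exactly which primes keep the trinomial syzygy bundle strongly semistable, and isolating the exceptional symmetric quintic --- is imported from Monsky's work. Accordingly the main obstacle is not a calculation but bookkeeping: restating Monsky's results in terms of strong semistability (hence of the density function, including in the singular-curve case, where one leans on [T1] and [Mo3] rather than on Theorem~\ref{vb} verbatim), and carefully matching the $\pm$ signs in the congruences across the least common multiple in part~(1).
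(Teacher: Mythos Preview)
Your overall architecture matches the paper's: reduce the Segre product to the individual factors via Propositions~\ref{p2} and~\ref{p3}, then settle each trinomial curve by analyzing strong semistability of its syzygy bundle and invoking Theorem~\ref{vb}. Two points, however, separate your sketch from the paper's actual argument.

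First, the handling of singular trinomial curves. You note that Theorem~\ref{vb} is stated for normal $R$ and propose to lean on [T1] and [Mo3] ``rather than on Theorem~\ref{vb} verbatim'' in the singular case, but you never say what replaces it. The paper's fix is concrete and different: pass to the normalization $S\to\tilde S$, observe that the finite degree-$0$ graded map has $0$-dimensional kernel and cokernel so that $f(S_s,{\bf m}_s)=f(\tilde S_s,{\bf m}\tilde S_s)$ and $f^{\infty}(S,{\bf m})=f^{\infty}(\tilde S,{\bf m}\tilde S)$, and then apply Theorem~\ref{vb} to the pullback $\pi^{*}V_s$ on the nonsingular curve $\tilde X_s$. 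Without this normalization step (or an explicit substitute), your Step~2 has a genuine gap: neither [T1] nor [Mo3] supplies the density-function statement you need on a singular curve.

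Second, the arithmetic input. You cite Monsky [Mo2], [Mo3]; the paper instead cites [T5]. Specifically, Theorem~5.6 of [T5] gives directly that $\pi^{*}V_s$ is strongly semistable when $p\equiv\pm 1\pmod{2\lambda_h}$ for a regular trinomial, and Theorem~1.1 of [T5] shows that for an irregular trinomial the HN filtration $0\subset\sL\subset\pi^{*}V$ is a line-subbundle filtration (hence automatically strong in rank two), so $f(S_s)=f^{\infty}(S)$ for all $s$ in that case --- this is how the irregular case, where $\lambda_h=1$, is disposed of. For part~(2) the paper again uses [T5] to assert that for the symmetric trinomial with $d\neq 5$ the bundle $V_s$ is semistable but not strongly semistable when $p\equiv\pm 2\pmod{\lambda_{h_1}}$ (so the unit $l$ is $2$), and then applies Corollary~\ref{e5} and Proposition~\ref{p3}(2). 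Monsky's papers compute $e_{HK}$ rather than semistability; you could in principle back out the semistability statements from his formulas via [T1]/[B], but that is extra work the paper avoids by citing [T5].
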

\begin{proof}We can choose spreads $(A, S_{iA})$ such that $\Char~k(s) > 
\mbox{max}\{d_1, \ldots, d_n\}^2$, for every closed point $s\in \Spec(A)$. 
Now for any irreducible plane curve given $S = k[x,y,z]/(h)$. Let $S\longto 
{\tilde S}$ be the normalization of $S$. Then it is a finite graded map of 
degree $0$ and $Q(S)= Q({\tilde S})$ such that ${\tilde S}$ is a finitely 
generated $\N$-graded $2$-dimensional domain over $k$. Now, for pairs $(S,{\bf m})$
and $({\tilde S}, {\bf m}{\tilde S})$, we can choose a spread 
$(A, S_A, {\bf m}_A)$ and $(A, {\tilde S}_A, {\bf m}{\tilde S}_A)$
such that for every closed point $s\in\Spec(A)$, the natural map $S_s = 
S_A\tensor k(s)\longto {\tilde S}_A\tensor k(s)$ is a finite graded map of degree 
$0$. This implies, for every $x\geq 0$
$$\lim_{q\to \infty}\frac{1}{q}\ell
\left(\frac{S_s}{{\bf m}^{[q]}}\right)_{\lfloor xq\rfloor}
= \lim_{q\to \infty}\frac{1}{q}\ell
\left(\frac{{\tilde S}_s}{{\bf m}{\tilde S_s}^{[q]}}\right)_{\lfloor xq\rfloor},$$
as kernel and cokernel of the map ${S_s} \longto {\tilde S}_s$ is $0$-dimensional. Therefore 
$f(S_s, {\bf m}_s) = f({\tilde S}_s, {\bf m}{\tilde S}_s)$ and 
$f^{\infty}(S, {\bf m}) = f^{\infty}({\tilde S}, {\bf m}{\tilde S})$. This 
also implies $e_{HK}(S_s, {\bf m}_s)= e_{HK}({\tilde S}_s, {\bf m}{\tilde S}_s)$ 
(this inequality about $e_{HK}$ can aslo be found in Lemma~1.3 of [Mo1], 
Theorem~2.7 in [WY] and [BCP]). Let $\pi:{\tilde X}_s = \mbox{Proj}~{\tilde S}_s
 \longto X_s= \mbox{Proj}~S_s$ be the induced map. We also choose a 
spread $(A, X_A, V_A)$, where $V_A$ is given by 
$$0\longto V_A\longto \sO_{X_A}\oplus
\sO_{X_A}\oplus\sO_{X_A}\longto \sO_{X_A}(1)\longto 0$$
and gives the syzygy bundle $V_s$ with its HN filtration as given in Example~\ref{ex5}.

This gives a short exact seuence of sheaves of $\sO_{{\tilde X}_s}$-modules 
$$0\longto \pi^*V_s\longto \sO_{{\tilde X}_s}\oplus
\sO_{{\tilde X}_s}\oplus\sO_{{\tilde X}_s}\longto \sO_{{\tilde X}_s}(1)\longto 0.$$
Moreover ${\tilde X}_s$ is a nonsingular curve. If $S$ is regular trinomial given by $h$ 
 then, 
by Theorem~5.6 of [T5], the bundle ${\pi}^*(V_s)$ is a strongly semistable, provided 
$\Char~k(s)\equiv\pm 1\pmod{2\lambda_{h_s}}$. Therefore, by Theorem~4.6, we have  
$f({\tilde S}_s, {\bf m}{\tilde S}_s) = f^{\infty}({\tilde S}, {\bf m}{\tilde S})$.
This implies $f(S_s, {\bf m}_s) = f^{\infty}(S, {\bf m})$, for   
$\Char~k(s)\equiv\pm 1\pmod{2\lambda_{S_s}}$.

If $S$ is an irregular trinomial then, by Theorem~1.1 of [T5], $\pi^*V$ 
has a HN filtration $0\subset \sL\subset \pi^*V$. Therefore 
 $0\subset \sL_s\subset \pi^*V_s$ is the HN filtration and hence 
the strong HN filtration (as $\rank~V =2$), for $\pi^*V_s$, for every closed point 
$s\in \Spec~A$. In particular, by Theorem~\ref{vb}, 
$f(S_s, {\bf m}_s) = f^{\infty}(S, {\bf m})$, for all such $s$. Now 
assertion~(1) follows by Proposition~\ref{p3}~(2).

If $S_1 = k[x,y,z]/(h_1)$, where $h_1$ is as in statement~(2) of the corollary then 
$V_s$ (here $X_s$ itself is nonsingular) is semistable, but not strongly semistable, 
if $\Char~k(s)\equiv\pm 2\pmod{\lambda_{{h_1}_s}}$. In particular, by Corollary~\ref{e5},
$f({S_1}_s, {\bf m}{S_1}_s) > f^{\infty}({S_1}_s, {\bf m}{S_1}_s)$, for such $s$. 
Therefore, the statement~(2) follows from Proposition~4.4~(2).\end{proof}

\section{Appendix}

\begin{lemma}\label{l9} For an integer $d\geq 2$, there exist universal
 polynomials
$P_i^d$, $P_i'^d$ in $\Q[X_0, \ldots, X_i]$, where $0\leq i\leq d-2$, 
 such that 
 if for a pair $(X, \sO_X(1))$, where $X$ is an integral variety of 
char $p>0$ and dimension $d-1$
 and $\sQ$ is a coherent sheaf of $\sO_X$-modules with 
the following respective Hilbert polynomials 

$$\chi(X, \sO_X(m)) = {\tilde e_0}{{m+d-1}\choose{d-1}}
-{\tilde e_1}{{m+d-2}\choose{d-2}}+\cdots +(-1)^{d-1}{\tilde e_{d-1}}$$
 and
$$\chi(X, \sQ(m)) = {q_0}{{m+d-2}\choose{d-2}}
-{q_1}{{m+d-3}\choose{d-3}}+\cdots +(-1)^{d-2}{q_{d-2}},$$
then
\begin{enumerate}
\item for $0\leq i\leq d-2$,
$$|q_i| \leq  p^{d-1} P_i^d({\tilde e_0}, \ldots, {\tilde e_{i+1}}),$$
if
there is a short exact sequence of $\sO_X$-modules
 $$0\longto \oplus^{p^{d-1}}\sO_X(-d)\longto F_*\sO_X \longto \sQ\longto 0.$$
\item for $0\leq i\leq d-2$,
 $$|q_i| \leq   m_0^{i+1}P_i^{'d}({\tilde e_0}, \ldots, {\tilde e_{i}}),$$
if  $\sQ$ fits in the  short exact sequence
 $$0\longto \sO_X(-m_0)\longto \sO_X\longto \sQ\longto 0$$
or in the  short exact sequence
$$0\longto \sO_X\longto \sO_X(m_0)\longto \sQ\longto 0$$
of $\sO_X$-modules.

\end{enumerate}

\end{lemma}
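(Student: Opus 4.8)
The plan is to read off the coefficients $q_i$ of the Hilbert polynomial of $\sQ$ directly from the two displayed short exact sequences, using nothing beyond additivity of the Euler characteristic $\chi$ on short exact sequences and the elementary expansion of binomial coefficients. The space of polynomials in $m$ of degree $\leq d-2$ carries the basis $\bigl\{{{m+d-2-i}\choose{d-2-i}}\bigr\}_{0\leq i\leq d-2}$, and the change of basis between it and the monomial basis $\{m^{d-2-i}\}$ depends only on $d$; so it suffices to bound, up to sign, the coefficient of $m^{d-2-i}$ in $\chi(X,\sQ(m))$ by $p^{d-1}$ (resp.\ by $m_0^{i+1}$) times a universal polynomial in the appropriate ${\tilde e_j}$, after which each $q_i$ inherits such a bound under a $d$-universal linear change of coordinates. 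The required $P_i^d, P_i'^d\in\Q[X_0,\ldots]$ are then obtained by taking absolute values of all the rational, $d$-dependent coefficients that arise. In part~(2) I will assume $m_0\geq 1$, the case $m_0=0$ being trivial since then $\sQ=0$.

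For part~(2), the sequence $0\to\sO_X(-m_0)\to\sO_X\to\sQ\to 0$ gives $\chi(X,\sQ(m))=\chi(X,\sO_X(m))-\chi(X,\sO_X(m-m_0))$. Writing $f_j(m)={{m+d-1-j}\choose{d-1-j}}$ and substituting the Hilbert polynomial of $(X,\sO_X(1))$, the $j$-th summand becomes $(-1)^j{\tilde e_j}\bigl(f_j(m)-f_j(m-m_0)\bigr)$, and by a finite Taylor expansion $f_j(m)-f_j(m-m_0)=\sum_{l\geq 1}\frac{(-1)^{l+1}}{l!}\,m_0^l f_j^{(l)}(m)$ is a polynomial in $m$ of degree $d-2-j$ whose coefficient of $m^{d-2-i}$ vanishes unless $j\leq i$ and is then a $d$-universal polynomial in $m_0$ of degree $\leq i-j+1$. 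Hence the $m^{d-1}$-terms cancel (confirming $\dim\sQ\leq d-2$), and the coefficient of $m^{d-2-i}$ in $\chi(X,\sQ(m))$ is a universal $\Q$-polynomial in $m_0,{\tilde e_0},\ldots,{\tilde e_i}$ of $m_0$-degree $\leq i+1$; this yields $|q_i|\leq m_0^{i+1}P_i'^d({\tilde e_0},\ldots,{\tilde e_i})$. The sequence $0\to\sO_X\to\sO_X(m_0)\to\sQ\to 0$ is handled the same way via $\chi(X,\sQ(m))=\chi(X,\sO_X(m+m_0))-\chi(X,\sO_X(m))$.

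For part~(1), the sequence $0\to\oplus^{p^{d-1}}\sO_X(-d)\to F_*\sO_X\to\sQ\to 0$ gives $\chi(X,\sQ(m))=\chi\bigl(X,(F_*\sO_X)(m)\bigr)-p^{d-1}\chi(X,\sO_X(m-d))$. The one genuine input is the identity $\chi\bigl(X,(F_*\sO_X)(m)\bigr)=\chi(X,\sO_X(pm))$: by the projection formula $(F_*\sO_X)(m)=F_*\sO_X\tensor\sO_X(m)=F_*\bigl(\sO_X\tensor F^*\sO_X(m)\bigr)=F_*\bigl(\sO_X(pm)\bigr)$ since $\sO_X(m)$ is invertible and $F^*\sO_X(m)=\sO_X(pm)$, while $R^i F_*=0$ for $i>0$ because the absolute Frobenius $F$ is affine. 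Thus $\chi(X,\sQ(m))=\sum_{j=0}^{d-1}(-1)^j{\tilde e_j}\bigl({{pm+d-1-j}\choose{d-1-j}}-p^{d-1}{{m-1-j}\choose{d-1-j}}\bigr)$. Expanding ${{pm+a}\choose{a}}=\sum_l c_{a,l}\,p^l m^l$, where ${{m+a}\choose{a}}=\sum_l c_{a,l}m^l$ has $d$-universal rational coefficients, the coefficient of $m^{d-2-i}$ in the $j$-th summand vanishes unless $j\leq i+1$ and is then $p^{d-1}$ times a $d$-universal rational multiple of ${\tilde e_j}$ in absolute value (using $p^l\leq p^{d-1}$ for $l\leq d-1-j\leq d-1$), the $m^{d-1}$-terms again cancelling. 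Therefore $q_i$ is in absolute value at most $p^{d-1}$ times a universal polynomial $P_i^d({\tilde e_0},\ldots,{\tilde e_{i+1}})$, which is the claim.

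I do not expect a genuine obstacle: the whole argument reduces to additivity of $\chi$ plus elementary algebra, the only cohomological ingredient being $\chi(X,(F_*\sO_X)(m))=\chi(X,\sO_X(pm))$. The part requiring attention is purely bookkeeping — tracking which ${\tilde e_j}$ can contribute to a given $q_i$ (indices $\leq i$ in part~(2), and $\leq i+1$ in part~(1), the extra index arising from the Frobenius rescaling $m\mapsto pm$), and the largest power of $p$ (namely $p^{d-1}$) and of $m_0$ (namely $m_0^{i+1}$) that can occur — and once this accounting is done, the existence of the universal polynomials $P_i^d, P_i'^d$ is automatic, since every constant produced along the way is a rational number depending only on $d$.
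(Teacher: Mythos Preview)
Your proposal is correct and follows essentially the same route as the paper: both arguments use additivity of $\chi$ on the given short exact sequences together with the identity $\chi(X,(F_*\sO_X)(m))=\chi(X,\sO_X(pm))$, then expand the resulting expressions in powers of $m$ and read off bounds on the $q_i$. The only cosmetic differences are that the paper carries out the coefficient comparison by an explicit induction on $i$ (solving the triangular system term by term) whereas you invoke the $d$-universal change of basis between the binomial and monomial bases at once, and you use a Taylor expansion in part~(2) where the paper writes the difference out directly; you also justify the Frobenius identity via the projection formula and affineness of $F$, which the paper leaves implicit.
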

\begin{proof}Assertion~(1):\quad Note that for $m\in \Z$, we have
\begin{equation}\label{e13}
\chi(X, \sQ(m)) =
\chi(X, O_X(mp)) -p^{d-1}\chi(X, \sO_X(m)).\end{equation}
We can express, for $1\leq n\leq d-1$,
$$(Y+n)\cdots (Y+2)(Y+1) = \sum_{j=0}^nC^n_jY^j,$$
 where $C^n_{n} = 1$ and, for $j < n$,
$$C_j^n \in
\left\{\sum x_1^{i_1}\cdots x_n^{i_n}\mid {\sum i_l =  n-j},
 \quad 0\leq j< n \leq d-1,\quad \{x_1,\ldots, x_n\} =
 \{1,\ldots, n\}\right\}.$$
Now expanding Equation~(\ref{e13}), we get
$$\frac{\tilde e_0}{(d-1)!}\left[C_{d-2}^{d-1}m^{d-2}(p^{d-2}-p^{d-1})+
C_{d-3}^{d-1}m^{d-3}(p^{
d-3}-p^{d-1})+ \cdots +C_0^{d-1}(1-p^{d-1})\right]$$
$$+ \cdots +\frac{(-1)^i{\tilde e_i}}{(d-1-i)!}\left[C^{d-1-i}_{d-1-i}
m^{d-1-i}(p^{d-1-i}-p^{d-1})+
C_{d-2-i}^{d-1-i}m^{d-2-i}(p^{d-2-i}-p^{d-1})\right.$$
$$\left.+ \cdots +C_0^{d-1-i}(1-p^{d-1})\right]+ \cdots + 
(-1)^{d-1}{\tilde e_{d-1}}\left[(1-p^{d-1})\right]$$

$$= \frac{q_0}{(d-2)!}\left[C^{d-2}_{d-2}m^{d-2}+C^{d-2}_{d-3}m^{d-3}+\cdots +
 C_0^{d-2}\right]-
\frac{q_1}{(d-3)!}\left[C^{d-3}_{d-3}m^{d-3}+C^{d-3}_{d-4}m^{d-4}+\cdots +
 C_0^{d-3}\right]$$
$$+\cdots + \frac{(-1)^{i-1}q_{i-1}}{(d-1-i)!}
\left[C^{d-1-i}_{d-1-i}m^{d-1-i}+C^{d-1-i}_{d-2-i}m^{d-2-i}+\cdots +
\cdots + C_0^{d-1-i}\right] + \cdots + (-1)^{d-2}q_{d-2}.$$
We prove the result for $q_i$, by induction on $i$.
For $i=0$, comparing the coefficients of $m^{d-2}$ on both the sides
we get
$$(p^{d-2}-p^{d-1})\left[\frac{\tilde e_0}{(d-1)!}C^{d-1}_{d-2}-
\frac{\tilde e_1}{(d-2)!}\right]
= \frac{q_0}{(d-2)!}, $$
which implies
$$ |q_0|\leq p^{d-1}\left(|{\tilde e_0}|C^{d-1}_{d-2}+
|{\tilde e_1}|\right)\leq
p^{d-1}\left({\tilde e_0}^2C^{d-1}_{d-2}+
{\tilde e_1}^2\right) .$$
Comparing coefficients of $m^{d-i}$ we get
$$(p^{d-i}-p^{d-1})\left[\frac{\tilde e_0}{(d-1)!}C^{d-1}_{d-i}-
\frac{\tilde e_1}{(d-2)!}C^{d-2}_{d-i}+ \cdots +(-1)^{i-1}
\frac{\tilde e_{i-1}}{(d-i)!}C^{d-1-i}_{d-i}\right]$$
$$= \frac{q_0}{(d-2)!}C^{d-2}_{d-i}-
\frac{q_1}{(d-3)!}C^{d-3}_{d-i}+ \cdots +(-1)^{i}
\frac{q_{i-2}}{(d-i)!}C^{d-i}_{d-i}.$$
This implies that
$$|q_{i-2}|\leq p^{d-1}\left[|{\tilde e_0}|C_{d-i}^{d-1}+|{\tilde e_1}|C_{d-i}^{d-2}+\cdots +
|{\tilde e_{i-1}}|C_{d-i}^{d-1-i}\right] +
\left[|q_0|C_{d-i}^{d-2}+|q_1|C_{d-i}^{d-3}+\cdots +
|q_{i-3}|C_{d-i}^{d+1-i}\right].$$
But
$$p^{d-1}\left[|{\tilde e_0}|C_{d-i}^{d-1}+|{\tilde e_1}|C_{d-i}^{d-2}+\cdots +
|{\tilde e_{i-1}}|C_{d-i}^{d-1-i}\right]\leq
p^{d-1}\left[{\tilde e_0}^2C_{d-i}^{d-1}+{\tilde e_1}^2C_{d-i}^{d-2}+\cdots +
{\tilde e_{i-1}}^2C_{d-i}^{d-1-i}\right].$$

Now the proof follows by induction.

\vspace{5pt}
\noindent{Assertion~(2)}:
For $m_0=0$ the statement is true vacuously.
Therefore we can assume that $m_0\geq 1$.
Now
$$\chi(X, \sQ(m)) = {q_0}{{m+d-2}\choose{d-2}}
-{q_1}{{m+d-3}\choose{d-3}}+\cdots +(-1)^{d-2}{q_{d-2}},$$
$$= \frac{q_0}{(d-2)!}\left[D^{d-2}_{d-2}m^{d-2}+D^{d-2}_{d-3}m^{d-3}+\cdots +
 D_0^{d-2}\right]-
\frac{q_1}{(d-3)!}\left[D^{d-3}_{d-3}m^{d-3}+D^{d-3}_{d-4}m^{d-4}+\cdots +
 D_0^{d-3}\right]$$
$$+\cdots + \frac{(-1)^{i-1}q_{i-1}}{(d-1-i)!}
\left[D^{d-1-i}_{d-1-i}m^{d-1-i}+D^{d-1-i}_{d-2-i}m^{d-2-i}+\cdots +
\cdots + D_0^{d-1-i}\right] + \cdots + (-1)^{d-2}q_{d-2},$$
where
$$D_j^k \in
\left\{\sum x^{i_1}\cdots x_k^{i_k}\mid {\sum i_l =  k-j}\quad
 0\leq j\leq k\leq d-2, \quad \{x_1,\ldots, x_k\} =
 \{1,\ldots, k\}\}\right\}.$$

On the other hand
$$\chi(X, \sO_X(m))-\chi(X, \sO_X(m-m_0)) =
\frac{{\tilde e_0}}{(d-1)!}
\left[C^{d-1}_{d-1}(m_0)(m^{d-2}+\cdots m^{d-3}m_0+\cdots + m_0^{d-2})\right.$$
$$+\left.C^{d-1}_{d-2}(m_0)(m^{d-3}+m^{d-4}m_0+\cdots + m_0^{d-3})
+\cdots + C^{d-1}_1(m_0)\right]
$$
$$-\frac{{\tilde e_1}}{(d-2)!}
\left[C^{d-2}_{d-2}(m_0)(m^{d-3}+\cdots m^{d-4}m_0+\cdots + m_0^{d-3})\right.$$
$$\left.
+C^{d-2}_{d-3}(m_0)(m^{d-4}+m^{d-5}m_0+\cdots + m_0^{d-4})
+\cdots + C^{d-2}_1(m_0)\right]
+\cdots .$$
Again we prove the result for $q_i$, by induction on $i$.
Comparing the coefficients for $m^{d-2}$ we get
$$\frac{q_0}{(d-2)!}D^{d-2}_{d-2} = \frac{{\tilde e_0}}{(d-1)!}C^{d-1}_{d-1}m_0
 \implies
|{q_0}| \leq {\tilde e_0}\frac{C^{d-1}_{d-1}m_0}{|D^{d-2}_{d-2}|}
\leq {\tilde e_0}^2m_0.$$

Comparing the coefficients of $m^{d-i}$, where $2\leq i\leq d$,  we get
$$\frac{q_0}{(d-2)!}D^{d-2}_{d-i}-\frac{q_1}{(d-3)!}D^{d-3}_{d-i}+\cdots +
(-1)^{i-2}\frac{q_{i-2}}{(d-i)!}D^{d-i}_{d-i}$$
$$= \frac{{\tilde e_0}}{(d-1)!}\left(C^{d-1}_{d-1}m_0^{i-1}+ C^{d-1}_{d-2}m_0^{i-2}
+\cdots +C^{d-1}_{d-i+1}m_0\right)$$
$$ -\frac{{\tilde e_1}}{(d-2)!}\left(C^{d-2}_{d-2}m_0^{i-2}+ C^{d-2}_{d-3}m_0^{i-3}
+\cdots +C^{d-2}_{d-i+1}m_0\right) +
\cdots (-1)^{i-2}\frac{{\tilde e_{i-2}}}{(d+1-i)!}\left(C^{d-i+1}_{d+1-i}\right).$$

This implies that
$$|q_{i-2}||D_{d-i}^{d-i}|\leq |{\tilde e_0}|\left(C_{d-1}^{d-1}m_0^{i-1} +
\cdots + C^{d-1}_{d-i+1}m_0\right)  + |{\tilde e_1}|\left(C_{d-2}^{d-2}m_0^{i-2}+
\cdots +
C^{d-2}_{d-i+1}m_0\right)$$
$$+\cdots + |{\tilde e_{i-2}}|(C^{d+1-i}_{d+1-i}) +
\left(|q_0||D^{d-2}_{d-i}|+|q_1||D^{d-3}_{d-i}|+ \cdots +
|q_{i-3}||D^{d+1-i}_{d-i}|\right).$$

Now the proof follows by induction.

For ${\sQ}$ such that
$$0\longto \sO_X\longto \sO_X(m_0)\longto \sQ\longto 0,$$ we have
$\chi(X, \sQ(m-m_0)) = \chi(X, \sO_X(m))-\chi(X, \sO(m-m_0))$, so we get
same bound for $q_i's$ in terms of ${\tilde e_j}'s$ as above except
that now
$$D_j^n \in
\left\{\sum x_1^{i_1}\cdots x_n^{i_n}\mid {\sum_{l=1}^n i_l =  n-j},\quad
 0\leq j\leq n,\quad \{x_1,\ldots, x_n\} =
 \{1-m_0,\ldots, n-m_0\}\}\right\}.$$
Hence the lemma follows.
\end{proof}


\begin{thebibliography}{}
\bibitem[BC]{BC}{Buchweitz, R., Chen, Q.}, {\it Hilbert-Kunz functions of cubic 
curves and surfaces}, J. Algebra 197 (1997), 246-267.

\bibitem[BCP]{BCP}{Buchweitz, R., Chen, Q., Pardue, K.}, {\it Hilbert-Kunz functions}, preprint
(Algebraic Geometry e-print series).

\bibitem[B]{B}{Brenner, H.}, {\it The
rationality of the Hilbert-Kunz multiplicity in graded dimension two},
Math. Ann. 334 (2006), 91-110.

\bibitem[BLM]{BLM}{Brenner, H., LI, J.,
Miller, C.}, {\it A direct limit for limit Hilbert-Kunz multiplicity for
smooth projective curevs}, Journal of Algebra 372 (2012), 488-504.

\bibitem[Br]{Br}{Bruns, W.}, {\it Conic divisor classes over a noraml monoid algebra}, Contemp.
Math. 390 Commutative algebra and algebraic geometry, (2005), 63-71.


\bibitem[C]{C}{Conca, A.}, {\it Hilbert-Kunz function of monomial ideals and binomial 
hypersurfaces}, Manuscripta Math. 90 (1996), 287-300.
 

\bibitem[EGA IV]{EGAIV}{Grothendieck, A., Dieudonn\'e, J.A.}, {\it El\'ements de
G\'eometrie Alg\'ebrique IV}, Pub. Math. IH\'ES.

\bibitem[E]{E}{Eto, K.}, {\it Multiplicity and Hilbert-Kunz multiplicity 
of monoid rings}, Tokyo J. Math. 25 (2002), 241-245.

\bibitem[FT]{FT}{Fakhruddin, N. Trivedi, V.}, {\it Hilbert-Kunz functions and 
multiplicities for full flag varieties and elliptic curves}, J. Pure Appl. Algebra
181 (2003), 23-52.

\bibitem[HM]{HM}{Hans, C. Monsky, P.}, {\it Some surprising Hilbert-Kunz functions}, 
Math.Z. 214 (1993), 119-135.

\bibitem[GM]{GM}{Gessel, I., Monsky, M.}, {\it The limit as $p\to\infty$
 of the Hilbert-Kunz multiplicity of $\sum x_i^{d_i}$}, arXiv:1007.2004.

\bibitem[Ha]{Ha}{Hartshorne, R.},  Algebraic Geometry, Springer-Verlag.

\bibitem[L]{L}{Langer,A.}, {\it Semistable sheaves in positive characteristic},
 Ann. Math., {\bf 159} (2004).

\bibitem[Mar]{Mar}{Maruyama, M.}, {\it Openness of a family of torsion
 free sheaves}, J. Math. Kyoto Univ. 16-3 (1976), 627-637.

\bibitem[Mo1]{Mo1}{Monsky, P.}, {\it TheHilbert-Kunz function}, Math. Ann. 263 (1983) 43-49.

\bibitem[Mo2]{Mo2}{Monsky, P.}, {\it The Hilbert-Kunz functions of a characteistic 
$2$ cubic}, J. Algebra 197 (1997), 268-277.

\bibitem[Mo3]{Mo3}{Monsky, P.},{\it The Hilbert-Kunz multiplicity of an irreducible 
trinomial}, Journal of Algebra 304 (2006)
1101-1107.

\bibitem[PR]{PR}{Paranjape, K., Ramanan, S.}, {\it On the canonical rings of a curve}, 
Algebraic Geometry and Commutative Algebra, Vol. II (Kinokunia,Tokyo), 
(1968), 503-616.

\bibitem[P]{P}{Pardue, K.},  Thesis, Brandeis University (1993).

\bibitem[SGA6]{SGA6}{Grothendieck, A., Berthellot, P. Illusie, L.,},  {\it SGA6,
Th\'eorie des Intersections et Th\'eor\`eme de Riemann-Roch}, Lect. Notes
in Math. 225, Springer-Verlag (1971).

\bibitem[T1]{T1}{Trivedi, V.}, {\it Semistability and HK multiplicity
for curves}, J. of Algebra, 284 (2005), 627-644.

\bibitem[T2]{T2}{Trivedi, V.}, {\it Hilbert-Kunz multiplicity and
reduction mod $p$}, Nagoya Math. Journal !85 (2007), 123-141.


\bibitem[T3]{T3}{Trivedi, V.}, {\it Frobenius pull backs of vector
bundles in higher dimensions},
Proc. Indian Acad. Sci. Math. Sci. 122 (2012), no.4, 615-628.

\bibitem[T4]{T4}{Trivedi, V.}, {\it Hilbert-Kunz
density function and Hilbert-Kunz multiplicity}, arXiv:1510.03294.

\bibitem[T5]{T5}{Trivedi, V.}, {\it Arithmetic behaviour of Frobenius 
semistability of syzygy 
bundles for plane trinomial curves}, arXiv:1701.07326v1 [math.AG] 25 Jan 2017.

\bibitem[T6]{T6}{Trivedi, V.}, {\it Semistability of syzygy bundles on projective
spaces in positive characteristic}, Inter. Jour. of Mathematics, 
21 No. 11 (2010) 1475-1504. 


\bibitem[W]{W}{Watanabe, K.}, {\it Hilbert-Kunz multiplicity of toric rings}, Proc.
Inst Nat. Sci. (Nihon Univ.) 35 (2000), 173-177.
 
\bibitem[WY1]{WY1}{Watanabe, K., Yoshida, K.}, {\it Hilbert-Kunz multiplicity and an 
inequality between multiplicity and colength}, J. Algebra, {\bf 230}~(2000), 295-317.

\bibitem[WY2]{WY2}{Watanabe, K., Yoshida, K.}, {\it Hilbert-Kunz multiplicity,
McKay correspondence and good ideals in two dimensional rational singularities}, 
Manuscripta Math. 104 (2001), 275-294.

\end{thebibliography}
\end{document}

\bibitem[T7]{T7}{Trivedi, V.}, {\it  Hilbert-Kunz functions of a 
Hirzebruch surface}, J. Algebra 457 (2016), 405–430.

\begin{rmk}

$$\{p\in \Z_{\geq n_0}\mid V_{C_i}~~\mbox{reduction mod}~~p~~\mbox{is strongly 
semistable}\} = \{p\in \Z_{\geq n_0}\mid f(S_{ip}, I_{ip}) = 
f^{\infty}(S_i, I_i)\}.$$
Hence, by Proposition~\ref{p3}, 
$$\{p\in \Z_{\geq n_0}\mid V_{C_1}, \ldots, V_{C_n}~~\mbox{are
simultaneously strongly semistable reduction mod}~p\}$$
$$=\{p\in \Z_{\geq n_0}\mid f((S_1\#\cdots \#S_n)_{p}, ({\bf m}_1\#\cdots \#{\bf m}_n)_p) = 
f^{\infty}(S_1\#\cdots\#S_n, {\bf m}_1\#\cdots\#{\bf m}_n)\},$$
where ${\bf m}_i$ denotes the graded maximal ideal of $S_i$. 
Moreover, for any $p\in \Z_{\geq n_0}$, we have
$$f((S_1\#\cdots \#S_n)_{p}, ({\bf m}_1\#\cdots \#{\bf m}_n)_p) - 
f^{\infty}(S_1\#\cdots\#, {\bf m}_1\#\cdots\#{\bf m}_n)\geq 0.$$ \end{rmk}